\setlist[enumerate]{label=(\alph*),font=\normalshape}
\setlist[itemize]{font=\normalshape}
\theoremstyle{plain}
\newtheorem{theorem}{Theorem}[section]
\newtheorem{definition}[theorem]{Definition}
\newtheorem{proposition}[theorem]{Proposition} 
\newtheorem{lemma}[theorem]{Lemma} 
\newtheorem{corollary}[theorem]{Corollary}
\theoremstyle{definition}
\newtheorem{remark}[theorem]{Remark}
\newcommand{\C}{\mathbb{C}}
\newcommand{\R}{\mathbb{R}}
\newcommand{\Z}{\mathbb{Z}}
\newcommand{\T}{\mathbb{T}}
\newcommand{\Zodd}{\mathbb{Z}_\text{odd}}
\newcommand{\N}{\mathbb{N}}
\newcommand{\impvar}{\,\cdot\,}
\newcommand{\embeds}{\hookrightarrow}
\newcommand{\ee}{\mathrm{e}}
\newcommand{\ii}{\mathrm{i}}
\let\bd\partial
\newcommand{\abs}[1]{\left\lvert#1\right\rvert}
\newcommand{\norm}[1]{\left\lVert#1\right\rVert}
\newcommand{\seminorm}[1]{\left[#1\right]}
\newcommand{\set}[1]{\left\{#1\right\}}
\newcommand{\calC}{\mathcal{C}}
\newcommand{\der}{\,\mathrm{d}}
\DeclareMathOperator{\sign}{sign}
\DeclareMathOperator{\ft}{\mathcal{F}}
\renewcommand{\Im}{\operatorname{Im}}
\newcommand{\pdv}[3][]{%
	\if\relax\detokenize{#1}\relax%
		\frac{\partial #2}{\partial #3}%
	\else%
		\frac{\partial^{#1} #2}{\partial {#3}^{#1}}%
	\fi%
}
\newcommand{\dv}[3][]{%
	\if\relax\detokenize{#1}\relax%
		\frac{\mathrm{d} #2}{\mathrm{d} #3}%
	\else%
		\frac{\mathrm{d}^{#1} #2}{{\mathrm{d} #3}^{#1}}%
	\fi%
}
\newcommand{\tri}{\Delta}
\newcommand{\lefttri}{\Delta_-}
\newcommand{\righttri}{\Delta_+}
\let\originalleft\left
\let\originalright\right
\renewcommand{\left}{\mathopen{}\mathclose\bgroup\originalleft}
\renewcommand{\right}{\aftergroup\egroup\originalright}
\newcommand{\clonelabel}[2]{\@bsphack%
  \expandafter\ifx\csname r@#2\endcsname\relax%
  \else\protected@write\@auxout{}{\string\newlabel{#1}%
    {\csname r@#2\endcsname}}%
  \fi%
  \expandafter\ifx\csname r@#2@cref\endcsname\relax%
  \else\protected@write\@auxout{}{\string\newlabel{#1@cref}%
    {\csname r@#2@cref\endcsname}}%
  \fi%
  \@esphack}
\newcommand{\spacext}[1]{C^1_{(x,t)}(#1)}
\newcommand{\spacex}[1]{C^1_x(#1)}
\newcommand{\refcheckize}[1]{%
  \expandafter\let\csname @@\string#1\endcsname#1%
  \expandafter\DeclareRobustCommand\csname relax\string#1\endcsname[1]{%
    \csname @@\string#1\endcsname{##1}\wrtusdrf{##1}}%
  \expandafter\let\expandafter#1\csname relax\string#1\endcsname
}
\begin{document}
	\title[Wellposedness for a (1+1)-dimensional wave equation]{Wellposedness for a (1+1)-dimensional wave equation with quasilinear boundary condition}
	
	\author{Sebastian Ohrem}
	\address{Institute for Analysis, Karlsruhe Institute of Technology (KIT), D-76128 Karlsruhe, Germany}\email{sebastian.ohrem@kit.edu}
	
	\author{Wolfgang Reichel}
	\address{Institute for Analysis, Karlsruhe Institute of Technology (KIT), D-76128 Karlsruhe, Germany}\email{wolfgang.reichel@kit.edu}
	
	\author{Roland Schnaubelt}
	\address{Institute for Analysis, Karlsruhe Institute of Technology (KIT), D-76128 Karlsruhe, Germany}\email{roland.schbaubelt@kit.edu}

	\date{\today} 
	
	\subjclass[2000]{Primary: 35L05, 35L20; Secondary: 35Q60, 35Q61}


    \keywords{Maxwell equations, wave equation, nonlinear boundary condition, wellposedness}
	

\begin{abstract}
	We consider the linear wave equation $V(x) u_{tt}(x, t) - u_{xx}(x, t) = 0$ on $[0, \infty)\times[0, \infty)$ with initial conditions and a nonlinear Neumann boundary condition $u_x(0, t) = (f(u_t(0,t)))_t$ at $x=0$. This problem is an exact reduction of a nonlinear Maxwell problem in electrodynamics. In the case where $f\colon\R\to\R$ is an increasing homeomorphism we study global existence, uniqueness and wellposedness of the initial value problem by the method of characteristics and fixed point methods. We also prove conservation of energy and momentum and discuss why there is no wellposedness in the case where $f$ is a decreasing homeomorphism. Finally we show that previously known time-periodic, spatially localized solutions (breathers) of the wave equation with the nonlinear Neumann boundary condition at $x=0$ have enough regularity to solve the initial value problem with their own initial data.
\end{abstract}

	\maketitle
	
\section{Introduction and main results}~

In this paper we study the initial value problem for the following 1+1-dimensional wave equation with quasilinear boundary condition:
\begin{align} \label{eq:IP} 
	\begin{cases}
		V(x) u_{tt}(x, t) - u_{xx}(x, t) = 0, &x \in [0, \infty), t \in [0, \infty), \\
		u_x(0, t) = (f(u_t(0,t)))_t , & x = 0, t \in [0, \infty), \\
		u(x, t_0) = u_0(x), u_t(x, t_0) = u_1(x), & x \in [0, \infty), t = 0.
	\end{cases}
\end{align}
This initial value problem has two main features: the wave equation on the half-axis $[0,\infty)$ is linear with a space-dependent speed of propagation and the boundary condition at $x=0$ is a rather singular, quasilinear, 2nd-order in time Neumann-condition. We show wellposedness on all time intervals $[0,T]$ with $T>0$, and preservation of energy and momentum.

\medskip

Our interest in \eqref{eq:IP} stems from the fact that it appears in the context of electromagnetics as an exact reduction of a nonlinear Maxwell system. We recall the Maxwell equations in the absence of charges and currents 
\begin{align*}
	\nabla\cdot\mathbf{D}&=0, &\nabla\times\mathbf{E}\,=&-\partial_t\mathbf{B}, &\mathbf{D}=&\varepsilon_0\mathbf{E}+\mathbf{P}(\mathbf{E}), \\
	\nabla\cdot\mathbf{B}&=0, &\nabla\times\mathbf{H}=&\,\partial_t\mathbf{D}, &\mathbf{B}=&\mu_0\mathbf{H} 
\end{align*}
with the electric field $\mathbf{E}$, the electric displacement field $\mathbf{D}$, the polarization field $\mathbf{P}$, the magnetic field $\mathbf{B}$, and the magnetic induction field $\mathbf{H}$. Particular properties of the underlying material are modelled by the specification of the relations between $\mathbf{E}, \mathbf{D}, \mathbf{P}$ on one hand, and $\mathbf{B}, \mathbf{H}$ on the other hand. Here, we assume a magnetically inactive material, i.e., $\mathbf{B}=\mu_0\mathbf{H}$, but on the electric side we assume a material with a Kerr-type nonlinear behaviour, cf. \cite{agrawal}, Section~2.3, given through 
$$
\mathbf{P}(\mathbf{E})=\varepsilon_0\chi_1(\mathbf{x})\mathbf{E}+\varepsilon_0\chi_{\text{NL}}(\mathbf{x})g(\abs{\mathbf{E}}^2)\mathbf{E}
$$ 
with $\mathbf{x}=(x,y,z)\in\R^3$ and $|\cdot|$ the Euclidean norm on $\R^3$. For simplicity we assume that $\chi_1, \chi_{\text{NL}}$ are given scalar valued functions instead of the more general situation where they are matrix valued. The scalar constants $\varepsilon_0, \mu_0$ are such that $c=(\varepsilon_0\mu_0)^{-1/2}$ is the speed of light in vacuum. Local existence, wellposedness and regularity results for the general nonlinear Maxwell system have been shown on $\R^3$ by Kato \cite{kato} and on domains by Spitz \cite{spitz1, spitz2}.

In its second order formulation the Maxwell system becomes
\begin{align}
	0=\nabla\times\nabla\times\mathbf{E} +\partial_t^2\Bigl(\mu_0\varepsilon_0(1+\chi_1(\mathbf{x}))\mathbf{E}+\mu_0\varepsilon_0\chi_{\text{NL}}(\mathbf{x})g(\abs{\mathbf{E}}^2)\mathbf{E}\Bigr).
	\label{curlcurl}
\end{align}
We assume additionally that $\chi_1(\mathbf{x})=\chi_1(x)$, $\chi_{\text{NL}}(\mathbf{x})=\chi_{\text{NL}}(x)$ and that $\mathbf{E}$ takes the form of a polarized traveling wave
\begin{equation} \label{polar}
\mathbf{E}(\mathbf{x},t)=(0,0,U(x,\kappa^{-1}y-t))^T.
\end{equation}
Then the quasilinear vectorial wave-type  equation \eqref{curlcurl} turns into the scalar equation 
\begin{equation} \label{second_order_U}
V(x) U_{tt} - U_{xx} + \Gamma(x) (g(U^2)U)_{tt}=0
\end{equation}
for $U=U(x,t)$, where $V(x)=\mu_0\varepsilon_0(1+\chi_1(x))-\kappa^{-2}$ and $\Gamma(x)=\mu_0\varepsilon_0\chi_{\text{NL}}(x)$. Note that \eqref{second_order_U} is an exact reduction of the Maxwell problem, from which all fields can be reconstructed. E.g., the magnetic induction $\mathbf{B}$ can be retrieved from $\nabla\times\mathbf{E}=-\partial_t\mathbf{B}$ by time-integration  and it will satisfy $\nabla\cdot\mathbf{B}=0$ provided it does so at time $t=0$. By assumption the magnetic field is given by $\mathbf{H}=\frac{1}{\mu_0} \mathbf{B}$ and it satisfies $\nabla\times\mathbf{H}=\partial_t\mathbf{D}$. It remains to check that the displacement field $\mathbf{D}$ satisfies the Gauss law $\nabla\cdot\mathbf{D}=0$ in the absence of external charges. This follows directly from the constitutive equation $\mathbf{D}=\varepsilon_0(1+\chi_1(\mathbf{x}))\mathbf{E}+\varepsilon_0\chi_{\text{NL}}(\mathbf{x})g(\abs{\mathbf{E}}^2)\mathbf{E}$ and the assumption of the polarized form of the electric field in \eqref{polar}.  

In the extreme case where $\Gamma(x)=2\delta_0(x)$ is a multiple of the $\delta$-distribution at $0$ and where $U(x,t)=u_t(x,t)$ for an even function $u(x,t)=u(-x,t)$, by removing one time derivative \eqref{second_order_U} becomes 
\begin{align} \label{second_order_u}
	\begin{cases}
		V(x) u_{tt}(x, t) - u_{xx}(x, t) = 0, &x \in [0, \infty), t \in [0, \infty), \\
		u_x(0, t) = (f(u_t(0,t)))_t , & x = 0, t \in [0, \infty)
	\end{cases}
\end{align}
with $f(s) \coloneqq g(s^2)s$. Clearly \eqref{eq:IP} is the initial value problem for \eqref{second_order_u}. 

\medskip

Problem \eqref{second_order_u} with $f(s)=\pm s^3$ has been considered in \cite{kohler_reichel}. Under specific assumptions on the linear potential $V$ the existence of infinitely many breathers, i.e., real-valued, time-periodic, spatially localized solutions of \eqref{second_order_u}, was shown. Typical examples of $V$ were given in classes of piecewise continuous functions having jump discontinuities. Under different assumptions on $V$ and $\Gamma$, but still including $\delta$-distributions, problem \eqref{second_order_u} was considered in \cite{bruell_idzik_reichel} and real-valued breathers were constructed. Our goal is to study the initial value problem \eqref{eq:IP} from the point of view of wellposedness, to derive the conservation of momentum and energy, and to verify that known time-periodic solutions from \cite{kohler_reichel} satisfy \eqref{eq:IP} with their own initial values. Note that the boundary condition in \eqref{eq:IP} becomes $u_x(0,t)=\pm 3 u_t(0,t)^2 u_{tt}(0,t)$ in the model case $f(s)=\pm s^3$. Hence, \eqref{eq:IP} is a singular initial value problem which is not covered by typical theories like, e.g., energy methods or monotone operators. Instead, our approach will be to prove existence by making use of the method of characteristics. Uniqueness, wellposedness, global existence, and the conservation of energy and momentum will build upon this.

\medskip

Our basic assumptions on the initial data $u_0, u_1$ are:
\begin{align}\label{ass:initialdata} \tag{A0} 
	u_0 \in C^1([0, \infty)), 
	\quad 
	u_1 \in C([0, \infty)).
\end{align}\clonelabel{ass:first}{ass:initialdata}%
Here $C^k([0, \infty)) = C^k([0, \infty), \R)$, and in general all function spaces consist of real-valued functions unless the codomain is explicitly mentioned.
Motivated by the results from \cite{kohler_reichel} we are interested in the case where the coefficient $V$ may have discontinuities. In particular, we consider piecewise $C^1$ functions $V$. 

Let $I \subseteq \R$ be a closed interval. We call a function $\phi \colon I \to \R$ piecewise $C^k$ if there exists a discrete set $D \subseteq I$ such that $\phi \in C^k(I \setminus D)$ and the limits $\phi^{(j)}(x-)$ and $\phi^{(j)}(x+)$ exist for all $x \in D(\phi)$ and $0 \leq j \leq k$, although they do not need to coincide. If $I$ is bounded from below (or above), in addition we require $\phi^{(j)}(\min I+)$ (or $\phi^{(j)}(\max I -)$) to exist for all $0 \leq j \leq k$.
Let $PC^k(I)$ denote the set of piecewise $C^k$ functions on $I$, and for $\phi \in PC(I) \coloneqq PC^0(I)$ let us denote by $D(\phi)$ the set of discontinuities of $\phi$. 

For the coefficient $V$ and the nonlinear function $f$ we assume
\begin{align}
	\tag{A1}\label{ass:potential:1} & V \in PC^1([0, \infty)), V, V'\in L^\infty, \inf V > 0, \\
	\tag{A2}\label{ass:potential:2} &\inf\{|d_1-d_2| \text{ with } d_1, d_2\in D(V) \cup \set{0}, d_1\not = d_2\} >0, \\
	\tag{A3}\label{ass:nonlinearity} &f \colon \R \to \R \text{ is an increasing homeomorphism}.
\end{align}\clonelabel{ass:last}{ass:nonlinearity}%
The main theorem of this paper is given next.

\begin{theorem} \label{thm:main}
	Assume \eqref{ass:first}--\eqref{ass:last}. Then \eqref{eq:IP} admits a unique and global $C^1$-solution. Moreover, \eqref{eq:IP} is wellposed on every finite time interval $[0,T]$ with $T>0$.
\end{theorem}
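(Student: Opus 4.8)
\emph{Proof strategy.} The plan is to transport \eqref{eq:IP} to characteristic coordinates, to solve the interior equation by the method of characteristics once the boundary trace $w(t)\coloneqq u_t(0,t)$ is known, and then to determine $w$ by a fixed point encoding the boundary condition. First I would introduce the travel-time variable $\sigma(x)\coloneqq\int_0^x\sqrt{V(\xi)}\,\der\xi$, which by \eqref{ass:potential:1} is a bi-Lipschitz increasing homeomorphism of $[0,\infty)$; writing $w(\sigma,t)=u(x,t)$, the equation turns into $w_{tt}-w_{\sigma\sigma}=a(\sigma)\,w_\sigma$ with $a=\tfrac{V'}{2V^{3/2}}\in L^\infty$, whose characteristics are the \emph{straight} lines $\sigma\pm t=\mathrm{const}$ (lower-order terms do not bend them). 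On each strip between consecutive points of $\sigma(D(V)\cup\set{0})$ the coefficient $a$ is continuous, so there a $C^1$-solution is represented by a d'Alembert formula dressed by a Duhamel term in $a\,w_\sigma$, equivalently by the Volterra system obtained by integrating the transport equations for the invariants $w_t\mp w_\sigma$ along characteristics. At an interface $d_i$ the transmission conditions (continuity of $u$ and of $u_x=\sqrt V\,w_\sigma$) act on the incoming invariants through a fixed scattering matrix; here \eqref{ass:potential:2} is decisive, as it forces any bounded space-time region to meet only finitely many interfaces and a ray to reflect only finitely often on $[0,T]$. To close the boundary condition without ever differentiating $f$, I would integrate it in time; with $w(0)=u_1(0)$ and $G'$ the incoming invariant at $\sigma=0$ (built from the data and, in later strips, from the earlier reflections), this leads --- up to the bulk Duhamel correction --- to the fixed point relation
\[
 f\bigl(w(t)\bigr)=f\bigl(u_1(0)\bigr)+\int_0^t\sqrt{V(0+)}\,\bigl(2G'(\tau)-w(\tau)\bigr)\,\der\tau ,
\]
equivalently $w=f^{-1}(\,\cdot\,)$, which makes sense because $f^{-1}$ is continuous by \eqref{ass:nonlinearity}.

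\emph{Local existence and uniqueness.} Coupling the (linear, bounded) interior solution operator with the displayed relation gives a fixed point problem for $w$ in $C([0,\tau])$ for small $\tau$. Since \eqref{ass:nonlinearity} provides no regularity of $f$ beyond continuity, I would apply Schauder's theorem: the map $w\mapsto f^{-1}\!\bigl(f(u_1(0))+\int_0^{\,\cdot}(\cdots)\bigr)$ sends a small closed ball of $C([0,\tau])$ into itself for $\tau$ small and is compact there, since $\int_0^{\,\cdot}$ returns equi-Lipschitz functions and $f^{-1}$ is continuous; undoing the change of variables yields a $C^1$-solution of \eqref{eq:IP} on $[0,\tau]\times[0,\infty)$ (the transmission conditions preserve $C^1$, and $w(0)=u_1(0)$ makes the reflected datum match across $\sigma=0$ with no compatibility assumption on $u_0,u_1$). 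For uniqueness I would compare two solutions and bootstrap over the finitely many reflection times; on each step the difference $\delta\coloneqq f(w_1)-f(w_2)$ of boundary traces obeys
\[
 \delta(t)=-\int_0^t\sqrt{V(0+)}\,\bigl(w_1(\tau)-w_2(\tau)\bigr)\,\der\tau
\]
up to terms that vanish by the previous step, and since $f$ is increasing $\sign\delta=\sign(w_1-w_2)$; hence on any subinterval where $\delta$ leaves $0$ while staying positive the right-hand side is $\le0$ although $\delta>0$ --- a contradiction, and symmetrically for the opposite sign --- so $\delta\equiv0$ and the two solutions agree. Monotonicity \eqref{ass:nonlinearity} is precisely what drives this: for a decreasing homeomorphism the signs line up the other way, $\delta$ may leave $0$, and uniqueness is lost, which is the mechanism behind the ill-posedness announced in the abstract.

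\emph{Global existence and wellposedness.} A local solution persists as long as its $C^1$-norm stays finite. A local energy estimate over the backward null triangle $\{\sigma,t\ge0,\ \sigma+t\le T\}$ with apex $(0,T)$ --- whose hypotenuse contributes the favourable term $-\tfrac12(w_t-w_\sigma)^2\le0$, and along which the lower-order term $a\,w_\sigma$ only produces an exponential Gronwall factor --- bounds $\Phi\bigl(u_t(0,t)\bigr)$ on $[0,T]$ by the energy of $u_0,u_1$ on the compact set $\{0\le\sigma\le T\}$, where $\Phi(s)\coloneqq sf(s)-\int_0^s f(\sigma)\,\der\sigma$ is the potential into which the boundary flux $-u_x(0,\cdot)u_t(0,\cdot)$ integrates via the Young-type identity $\int_0^{f(s)}f^{-1}(\eta)\,\der\eta=\Phi(s)$. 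By \eqref{ass:nonlinearity} one has $\Phi\ge0$ and $\Phi(s)\to\infty$ as $\abs{s}\to\infty$, so $u_t(0,t)$ stays bounded on $[0,T]$; propagating this through the characteristic representation --- the outgoing boundary invariant is $G'-w$, and $G'$ is controlled on $[0,T]$ by $u_0,u_1$ through the finitely many bounded scattering steps --- bounds all invariants, hence $u_t$ and $u_x$, uniformly on $[0,T]\times[0,\infty)$, and a continuation argument then furnishes the global $C^1$-solution. Finally, rerunning the Schauder and bootstrap estimates with data differences in place of zero and invoking Gronwall's inequality shows that $(u_0,u_1)\mapsto u$ is continuous from $C^1\times C^0$ into $C^1$, locally uniformly on $[0,T]\times[0,\infty)$; this is the asserted wellposedness.

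\emph{Main difficulty.} The crux is to make the method of characteristics cooperate at once with the discontinuous coefficient $V$ and with the singular, second-order-in-time nonlinear boundary condition: one must carry the transmission conditions through the interior representation, rephrase the boundary condition so that the mere homeomorphism $f$ is never differentiated, and still close the fixed point in $C^1$ without losing a derivative --- with \eqref{ass:potential:2} keeping the reflection bookkeeping finite, and \eqref{ass:nonlinearity} powering both the Osgood-type uniqueness argument and the coercivity of $\Phi$ behind global existence.
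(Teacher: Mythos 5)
Your overall strategy mirrors the paper's quite closely: normalize the wave speed via $\sigma=\int_0^x\sqrt{V}$, use d'Alembert plus Duhamel on small triangles between consecutive points of $D(V)\cup\{0\}$, handle interfaces by matching $u$ and $u_x$, integrate the boundary condition once in time so that only $f^{-1}$ (not $f'$) is ever used, prove local existence by Schauder, uniqueness by a monotonicity/comparison argument, global existence via the coercive boundary functional $F(s)=sf(s)-\int_0^s f$, and wellposedness by a stability estimate. One genuine organizational difference is that the paper first establishes the theorem under the auxiliary growth hypothesis \eqref{ass:linear_growth} on $f$ (needed to make Schauder's invariant set work on a fixed, non-shrinking triangle), and only afterwards removes it by a cutoff $f_K$ combined with the localized energy bound of Lemma~\ref{lem:localized_energy}; your route of a local-in-time Schauder step followed by continuation is plausible and could avoid the cutoff, but you would then need to argue that the time of existence depends only on a $C^1$-bound of the data (plus a uniform modulus of continuity of $f^{-1}$ on a compact set), which you do not spell out.

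The serious gap is in uniqueness (and, by the same token, in wellposedness). You write
\[
\delta(t)=-\int_0^t\sqrt{V(0+)}\,\bigl(w_1(\tau)-w_2(\tau)\bigr)\,\der\tau
\]
``up to terms that vanish by the previous step,'' and then argue by sign. But the bulk Duhamel correction from $a\,w_\sigma$ feeds back into the boundary trace \emph{within the current strip and the current time step}; it does not vanish once the earlier reflection step is known, and it is not sign-definite. It produces a remainder of size $\sim\beta\int_{t_\star}^t\abs{w_1-w_2}$, i.e.\ of the same order as the main term, so the bare sign argument does not close. What the paper actually needs here is the sharp estimate of Lemma~\ref{lem:fine_boundary_estimate}, namely
\[
\abs{\Phi_+(b,0,0)_z(0,t)+b'(t)}\le\beta\int_{t_\star}^{t}\abs{b'(\tau)}\,\der\tau,
\]
together with a time-localization to $t-t_\star<1/\beta$: after Fubini, the combination $\int_{t_\star}^{t}(-1+\beta(t-s))\delta(s)\,\der s$ is strictly negative on that window, yielding the contradiction. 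This fine ``almost-nilpotent'' bound on the boundary-to-boundary Duhamel feedback is the technical heart of the paper's Lemmas~\ref{lem:wellposed:jump} and~\ref{lem:wellposed:boundary} and of Proposition~\ref{prop:wp}; without it, or an equivalent quantitative control of the correction, neither your uniqueness argument nor your ``rerun Schauder with data differences and invoke Gronwall'' wellposedness argument is complete. You should state and prove this estimate (or an analogue adapted to your scattering-matrix bookkeeping) before the sign/Gronwall arguments can be made rigorous.
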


In Proposition~\ref{prop:wp} our concept of continuous dependence on data is stated precisely. In the above result the assumption \eqref{ass:nonlinearity} is crucial. For a decreasing homeomorphism $f$ the result of \cref{thm:main} does not hold, see \cref{rem:decreasing}. Since we have already used the notion of a $C^1$-solution, we are going to explain it in detail next. As the notion of a $C^1$-solution will also be used for subdomains of $[0,\infty)\times [0,\infty)$ we first define the notion of an admissible domain.

\begin{definition}[admissible domain] \label{def:admissible_domain:x}
	We call a set $\Omega \subseteq [0, \infty) \times [0, \infty)$ an \emph{admissible domain} if it is of the form
	\begin{align*}
		\Omega = \{(x, t) \in [0, \infty) \times [0, \infty) \mid t \leq h(x)\}
	\end{align*}
	where $h \equiv +\infty$ or $h \colon [0, \infty) \to \R$ is Lipschitz with $\abs{h_x(x)} \leq \sqrt{V(x)}$ for almost all $x$. We denote the relative interior of $\Omega$ by
	\begin{align*}
		\Omega^\circ \coloneqq \{(x, t) \in [0, \infty) \times [0, \infty) \mid t < h(x)\}.
	\end{align*}
\end{definition}

In order to explain the notion of a $C^1$-solution let us first mention that we cannot expect that a solution of \eqref{eq:IP} has everywhere second derivatives $u_{tt}$ or $u_{xx}$. This is essentially due to the nonlinear boundary condition and the discontinuities of second derivatives which propagate away from $x=0$. However, if we denote by $c(x) \coloneqq \frac{1}{\sqrt{V(x)}}$ the inverse of the $x$-dependent wave speed, then
we can factorize the wave operator as 
\begin{align*}
\frac{\partial^2}{\partial t^2}-c(x)^2 \frac{\partial^2}{\partial x^2} = (\partial_t- c(x)\partial_x)(\partial_t+c(x)\partial_x)+c(x)c'(x)\partial_x.
\end{align*}
It is then reasonable for a $C^1$-solution to have almost everywhere a mixed second directional derivative $\partial^2_{\nu,\mu}$ with directions $\nu=(1,-c(x))$ and $\mu=(1,c(x))$. This is the basis for the following definition.

\begin{definition}[solution] \label{def:solution:x}
	A function $u \in C^1(\Omega)$ on an admissible domain $\Omega$ is called a $C^1$-solution to \eqref{eq:IP} if the following hold:
	\begin{enumerate}[(i)]
		\item For all $(x, t) \in \Omega\setminus(D(c)\cup D(c')\times\R)$ we have $(\partial_t - c(x)\partial_x)(u_t + c(x)u_x)(x, t) = - c(x) c_x(x) u_x(x, t)$.

		\item $(f(u_t(0, t)))_t = u_x(0, t)$ for all $(0, t) \in \Omega^\circ$.

		\item $u(x, 0) = u_0(x)$ for all $(x, 0) \in \Omega$, $u_t(x, 0) = u_1(x)$ for all $(x, 0) \in \Omega^\circ$.
	\end{enumerate}
\end{definition}


Problem \eqref{eq:IP} has a momentum given by
\begin{align} \label{eq:momentum}
	M(u, t) \coloneqq \int_0^\infty V(x) u_t \der x + f(u_t(0, t))	
\end{align} 
and an energy given by
\begin{align} \label{eq:energy} 
	E(u, t) \coloneqq \tfrac12 \int_{0}^{\infty} \left(V(x) u_t(x, t)^2 + u_x(x, t)^2\right) \der x + F(u_t(0, t))
\end{align}
where $F(s) \coloneqq s f(s) - \int_{0}^{s} f(\sigma) \der \sigma$. If, e.g., $f$ is continuously differentiable, then $F(s)$ is a primitive of $s f'(s)$. The conservation of momentum and energy is stated next.


\begin{theorem} \label{thm:conservation}
	Assume \eqref{ass:first}--\eqref{ass:last} and that $u$ is a $C^1$-solution of \eqref{eq:IP} with $u_0'(x), u_1(x) \to 0$ as $x \to \infty$. Then the momentum given by \eqref{eq:momentum} and the energy given by \eqref{eq:energy} are time-invariant.
\end{theorem}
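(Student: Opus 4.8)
The plan is to differentiate $M(u,t)$ and $E(u,t)$ in time and show the derivatives vanish, using the PDE, the boundary condition, and the decay of the data to kill boundary terms at $x=\infty$. The first subtlety is that a $C^1$-solution need not have $u_{tt}$ or $u_{xx}$ everywhere, so I would not differentiate naively under the integral sign; instead I would work with the characteristic (d'Alembert-type) structure. Concretely, set $p \coloneqq u_t + c(x)u_x$ and $q \coloneqq u_t - c(x)u_x$ where $c(x)=1/\sqrt{V(x)}$. By Definition~\ref{def:solution:x}(i), away from $D(c)\cup D(c')$ we have $(\partial_t - c\partial_x)p = -cc_x u_x$ and, symmetrically, $(\partial_t + c\partial_x)q = c c_x u_x$; these are transport equations for $p$ and $q$ along the characteristics, and since $V\in PC^1$ with the separation assumption \eqref{ass:potential:2}, the characteristics cross each discontinuity line transversally and $p,q$ extend continuously across them. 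Rewriting the integrands, $V u_t = \tfrac{\sqrt V}{2}(p+q)$ and $V u_t^2 + u_x^2 = \tfrac{\sqrt V}{2}(p^2+q^2)$ (using $u_x = \tfrac{\sqrt V}{2}(p-q)$), so both conserved quantities become integrals of transported quantities.

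For the momentum: I would show $\frac{d}{dt}\int_0^\infty V u_t\,dx = -u_x(0,t) \cdot (\text{sign of the outgoing characteristic contribution})$, more precisely that the spatial integral's time derivative equals the flux through $x=0$. Using the PDE in divergence form, $\partial_t(V u_t) = \partial_t(V u_t) $ and $V u_{tt} = u_{xx}$ (in the appropriate weak/characteristic sense), so $\partial_t(Vu_t) = \partial_x(u_x)$; integrating over $x\in[0,\infty)$ gives $\frac{d}{dt}\int_0^\infty V u_t\,dx = \lim_{x\to\infty}u_x(x,t) - u_x(0,t) = -u_x(0,t)$, where the limit at infinity vanishes because finite speed of propagation (the admissible-domain structure) confines the perturbation of the data and $u_0'(x)\to 0$. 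On the other hand $\frac{d}{dt}f(u_t(0,t)) = u_x(0,t)$ exactly by the boundary condition Definition~\ref{def:solution:x}(ii). Adding, $\frac{d}{dt}M(u,t)=0$.

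For the energy: a direct formal computation gives $\frac{d}{dt}\big(\tfrac12\int_0^\infty (Vu_t^2+u_x^2)\,dx\big) = \int_0^\infty (V u_t u_{tt} + u_x u_{xt})\,dx = \int_0^\infty (u_t u_{xx} + u_x u_{xt})\,dx = \int_0^\infty \partial_x(u_t u_x)\,dx = \lim_{x\to\infty}u_t u_x - u_t(0,t)u_x(0,t) = - u_t(0,t)u_x(0,t)$, again using decay at infinity. For the boundary term: $\frac{d}{dt}F(u_t(0,t)) = u_t(0,t) f'(u_t(0,t)) u_{tt}(0,t) = u_t(0,t)\,\partial_t(f(u_t(0,t))) = u_t(0,t) u_x(0,t)$ by the boundary condition, and this cancels the flux term, so $\frac{d}{dt}E(u,t)=0$. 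Note this formal derivative of $F$ is valid even without differentiability of $f$ if one writes $F(u_t(0,t))$ directly and uses that $t\mapsto f(u_t(0,t)) = \int_0^t u_x(0,s)\,ds + f(u_1(0))$ is $C^1$, which lets one integrate by parts: $\frac{d}{dt}F(s(t)) = \frac{d}{dt}\big(s(t)f(s(t)) - \int_0^{s(t)}f\big) = s'(t)f(s(t)) + s(t)(f(s(t)))' - f(s(t))s'(t) = s(t)(f(s(t)))'$ with $s(t)=u_t(0,t)$.

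\textbf{Main obstacle.} The genuine difficulty is making the above formal integration-by-parts rigorous given only $u\in C^1$: one must justify differentiating the spatial integrals, handle the discontinuity lines of $V$ (and $V'$) emanating from the jump points of $D(V)\cup\{0\}$, and control the tail at $x=\infty$. I would handle this by the characteristic representation: express the energy/momentum integrals along a curve and use the transport equations for $p,q$ from Definition~\ref{def:solution:x}(i), which hold off the discontinuity set; since by \eqref{ass:potential:2} these lines are finite in number in any bounded region and the transported quantities glue continuously across them, a cutoff-and-limit argument (cut off at $x=R$, differentiate the truncated functional using that on $[0,R]$ only finitely many characteristic/discontinuity interactions occur, then let $R\to\infty$ using finite propagation speed and the decay of $u_0',u_1$) yields the result. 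The boundary flux identities at $x=0$ are the clean part, coming directly from Definition~\ref{def:solution:x}(ii); it is the interior bookkeeping across the jumps of $V$ that requires care.
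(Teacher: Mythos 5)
Your overall plan (exploit the characteristic factorization of the wave operator to turn the conservation laws into a boundary-flux statement, then kill the flux at $x=\infty$ by decay) is in the same spirit as the paper, but there are two concrete gaps, and the paper's execution differs from yours in a way that neatly sidesteps your "main obstacle."

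\emph{Gap 1: continuity of the Riemann invariants.} You claim that $p = u_t + c(x)u_x$ and $q = u_t - c(x)u_x$ "extend continuously across" the discontinuity lines of $V$. This is false: a $C^1$-solution has $u, u_t, u_x$ continuous, but $c$ jumps, so $p$ and $q$ jump too. What \emph{is} continuous across a jump of $c$ is the flux combination appearing in the conservation laws (e.g. $u_t u_x$, $V u_t^2 + u_x^2$, $V u_t$), and that is the property the argument actually needs. The paper handles this by applying the divergence theorem on rectangles $[a_k, a_{k+1}]\times[t_1, t_2]$ whose interiors avoid $D(c)\cup D(c_z)$ and then observing that the boundary contributions on shared vertical edges $z = a_k$ cancel exactly because the flux densities are continuous. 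This telescoping replaces any need to differentiate the energy or momentum integral in $t$ and also dispenses with the characteristic-tracking and cutoff-in-$R$ bookkeeping you gesture at; the $R\to\infty$ limit is then a clean consequence of the estimates from \cref{cor:operator_estimates} and the decay of $u_0', u_1$.

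\emph{Gap 2: the derivative of $F(u_t(0,t))$.} Your computation
\begin{align*}
\frac{d}{dt}F(s(t)) = s'(t)f(s(t)) + s(t)\bigl(f(s(t))\bigr)' - f(s(t))\,s'(t)
\end{align*}
with $s(t) = u_t(0,t)$ is not valid as written: it uses $s'(t)$, but a $C^1$-solution only guarantees that $f\circ s$ is $C^1$, not that $s$ itself is differentiable (e.g.\ $f(y)=y^3$, $s = (f\circ s)^{1/3}$ can fail to be $C^1$ where $f\circ s$ vanishes). The final identity $\frac{d}{dt}F(s(t)) = s(t)\bigl(f(s(t))\bigr)'$ is nevertheless true, but it needs a separate argument. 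The paper proves the integral form
\begin{align*}
\int_{t_1}^{t_2} g(t)\,\frac{d}{dt}\,f(g(t))\,dt = F(g(t_1)) - F(g(t_0))
\end{align*}
for $g$ merely continuous with $f\circ g\in C^1$ by mollifying $f$ (\cref{lem:aux:chain_rule_formula}); alternatively one can use the Young-inequality identity $F\circ f^{-1}(y) = \int_0^y f^{-1}$ to see that $F\circ f^{-1}$ is $C^1$ and then apply the chain rule to $F(s(t)) = (F\circ f^{-1})(f(s(t)))$. Either way, this step needs more than the algebraic manipulation you wrote.

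In short: the strategy is right and you correctly identify where the difficulty lies, but the claim that $p,q$ pass continuously through the jumps is wrong (only the flux densities are), and the $\frac{d}{dt}F$ step is not justified as stated and requires something like the paper's mollification lemma.
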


\begin{remark}
	Note that $F(s) = \int_{0}^{s} f(s) - f(\sigma) \der \sigma$ goes to $+\infty$ as $s \to \pm\infty$, so that due to \cref{thm:conservation}, $u_x(\impvar, t)$ and $u_t(\impvar, t)$ are bounded in $L^2(0, \infty)$ and $u_t(0, t)$ is bounded as well.
\end{remark}

Another common notion of solution for \eqref{eq:IP} is the notion of a weak solution, which we only give for $\Omega = [0, \infty)^2$. The fact that a $C^1$-solution to \eqref{eq:IP} is also a weak solution to \eqref{eq:IP} holds true an will be proven in \cref{prop:c1_implies_weak} in \cref{sec:energy}.

\begin{definition}[weak solution] \label{def:weak_solution}
	A function $u \in W^{1, 1}_\text{loc}([0, \infty) \times [0, \infty))$ is called a weak solution to \eqref{eq:IP} if $f(u_t(0, \impvar)) \in L^1_\text{loc}([0, \infty))$, $u(\impvar, 0) = u_0$, and $u$ satisfies
	\begin{align*}
		0 &= \int_0^\infty \int_0^\infty \left(V(x) u_t \varphi_t - u_x \varphi_x\right) \der x \der t
		+ \int_0^\infty f(u_t(0, t)) \varphi_t(0, t) \der t
		\\ &\quad+\int_{0}^{\infty} V(x) u_1(x) \varphi(x, 0) \der x
		+ f(u_1(0)) \varphi(0, 0)
	\end{align*}
	for all $\varphi \in C_c^\infty([0, \infty) \times [0, \infty))$.
\end{definition}

\begin{remark} \label{rem:decreasing}
	Due to assumption \eqref{ass:nonlinearity} we have only considered increasing functions $f$.
	If we instead allow $f \colon \R \to \R$ to be a decreasing homeomorphism, then \eqref{eq:IP} will not be wellposed in general and can have multiple solutions.
	Consider for example the cubic term $f(y) = - y^3$ with constant potential $V = 1$ and homogeneous initial data: 
	\begin{align} \label{eq:loc:decreasing_nonlinearity} 
		\begin{cases}
			u_{tt}(x, t) - u_{xx}(x, t) = 0, &x \in [0, \infty), t \in [0, \infty), \\
			u_x(0, t) = - (u_t(0,t)^3)_t , & x = 0, t \in [0, \infty), \\
			u(x, t_0) = 0, u_t(x, t_0) = 0, & x \in [0, \infty), t = 0.
		\end{cases}
	\end{align}
	By direct calculation one can show that the right-traveling wave
	\begin{align*}
		u_p(x, t) = \begin{cases}
			\left( \frac{2}{3} \left(t - x\right)\right)^{\frac{3}{2}}, &x < t, \\
			0, & x \geq t
		\end{cases}
	\end{align*}
	is a nontrivial solution to \eqref{eq:loc:decreasing_nonlinearity}. In fact, $u$ is a $C^1$-solution of $(\partial_x+\partial_t)u=0$. But \eqref{eq:loc:decreasing_nonlinearity} also has the trivial solution $u = 0$, or $u(x, t) = \pm u_p(x, t - \tau)$ for any $\tau \geq 0$. However, due to the continuity of $f^{-1}$, one can still show existence of solutions to \eqref{eq:IP} in the case where $f$ grows at least linearly, cf. \eqref{ass:linear_growth}. This follows from the arguments in \cref{sec:linear_results,sec:main_proof}. \cref{thm:conservation} also holds when $f$ is decreasing, but now the quantity $F(y)$ tends to $-\infty$ as $y \to \pm \infty$, so that \eqref{eq:energy} does not give rise to estimates on $u$. Lastly, also in this case $C^1$-solutions to \eqref{eq:IP} are weak solutions.
\end{remark}

\medskip

In addition to the problem being posed on the positive real half-line $x \in [0, \infty)$, we can also consider the same quasilinear problem posed on a bounded domain $x \in [0, L]$ where we impose a homogeneous Dirichlet condition at $x=L$:
\begin{align} \label{eq:IP:bdd}
	\begin{cases}
		V(x) u_{tt}(x, t) - u_{xx}(x, t) = 0, &x \in [0, L], t \in [0, \infty), \\
		u_x(0, t) = (f(u_t(0,t)))_t , & t \in [0,\infty), \\
		u(x, 0) = u_0(x), u_t(x, 0) = u_1(x), & x \in [0, L], \\
		u(L, t) = 0, & t \in [0, \infty). 
	\end{cases}
\end{align}

Both \cref{thm:main} and \cref{thm:conservation} remain valid when making the obvious adaptations to this setting.

\begin{theorem} \label{thm:main_and_energy:bdd}
	Assume \eqref{ass:first}--\eqref{ass:last}. Then \eqref{eq:IP:bdd} admits a unique and global $C^1$-solution $u$. Moreover, the energy given by 
	\begin{align*}
		E(u, t) \coloneqq \tfrac12 \int_{0}^{L} \left(V(x) u_t(x, t)^2 + u_x(x, t)^2\right) \der x + F(u_t(0, t)).
	\end{align*}
	is time-invariant.
\end{theorem}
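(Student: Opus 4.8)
The plan is to retrace the proofs of \cref{thm:main} and \cref{thm:conservation}, pointing out only the changes caused by the extra Dirichlet boundary at $x=L$. First one adapts \cref{def:admissible_domain:x} and \cref{def:solution:x} in the obvious way (replacing $[0,\infty)$ by $[0,L]$ in the $x$-variable and adding the requirement $u(L,t)=0$ on $\Omega^\circ$), and one imposes the compatibility conditions $u_0(L)=u_1(L)=0$, which are the natural replacement of \eqref{ass:first} in the bounded setting. The construction is again by the method of characteristics: away from $\{x=0\}$, $\{x=L\}$ and $D(c)\cup D(c')$ the solution is transported along characteristics by the same d'Alembert-type formulas as in the half-line case, and the singular quasilinear condition at $x=0$ is handled by the identical fixed point scheme of \cref{sec:linear_results,sec:main_proof}, which only ever sees a neighbourhood of $x=0$ in the $x$-variable and is insensitive to what the domain looks like for larger $x$. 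The one genuinely new ingredient is the reflection of characteristics at $x=L$.

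At $x=L$ the potential $V$ is bounded with $\inf V>0$ and the one-sided limits $V(L-),V'(L-)$ exist, so $c=V^{-1/2}$ is a positive piecewise-$C^1$ function there. For a $C^1$ function $u$ the Dirichlet condition $u(L,t)\equiv 0$ forces $u_t(L,t)\equiv 0$ and hence ties the outgoing Riemann invariant to the incoming one by the linear, nonsingular rule $(u_t-c\,u_x)(L,t)=-(u_t+c\,u_x)(L,t)$; equivalently one may continue the solution past $x=L$ by the odd reflection $u(2L-x,t):=-u(x,t)$, $V(2L-x):=V(x)$, turning the boundary into an interface at which the characteristic transport matches. This is a reflection of the same (in fact simpler) type as the ones appearing in the linear theory, so it produces no loss of regularity once $u_0(L)=u_1(L)=0$, a condition inherited automatically by the reflected data. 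Global existence and uniqueness then follow exactly as on the half-line: the solution is built on growing admissible domains by alternating the nonlinear step at $x=0$ with the linear reflection at $x=L$, and by finite speed of propagation only finitely many such alternations occur in any time window $[0,T]$, so the continuation and uniqueness arguments, and the Lipschitz estimates underlying the wellposedness statement (cf.\ \cref{prop:wp}), carry over verbatim, the reflection map at $x=L$ being Lipschitz in the relevant norms.

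For the energy identity one differentiates $E(u,t)$ as in the proof of \cref{thm:conservation}: the interior terms cancel using the equation, the boundary flux at $x=0$ is absorbed into $\tfrac{\mathrm d}{\mathrm d t}F(u_t(0,t))$ via the boundary condition, and the boundary flux $u_t(L,t)u_x(L,t)$ at $x=L$ vanishes since $u_t(L,t)\equiv 0$; hence $E$ is constant. (Momentum is not conserved on $[0,L]$, which is why \cref{thm:main_and_energy:bdd} asserts only energy invariance: the Dirichlet wall breaks the spatial translation symmetry, and indeed $\tfrac{\mathrm d}{\mathrm d t}M(u,t)=u_x(L,t)$ need not vanish.) None of this involves a new difficulty — the hard analysis already sits in the half-line case — but the step deserving care is to verify that composing the (finitely many, in bounded time) reflections at $x=L$ with the passages near $x=0$ yields a genuinely $C^1$ function, i.e.\ that the one-sided derivatives match consistently along the reflected characteristics; this is where $u_0(L)=u_1(L)=0$ and the separation hypothesis \eqref{ass:potential:2} (now also keeping $D(V)$ away from $L$) are used.
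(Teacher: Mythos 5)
Your proposal is essentially the paper's approach: the paper handles the Dirichlet wall by introducing ``Dirichlet triangles'' centred at $z=\kappa(L)$ and solving on them with $\Phi_-(0,u_0,u_1)$ from \cref{lem:dirichlet_phi} — which is defined precisely by the odd reflection you describe — and then runs the same triangle-covering construction and energy computation (flux at $x=L$ vanishes because $u_t(L,t)\equiv 0$), with the compatibility conditions $u_0(L)=u_1(L)=0$ implicit in the requirements $b(t_0)=u_0(z_0)$, $b'(t_0)=u_1(z_0)$ of that lemma. The only cosmetic caveat is that the energy identity is obtained not by literally differentiating $E(u,t)$ (the solution is only $C^1$) but by the integrated, factored-wave-operator argument of \cref{thm:conservation}, which you do reference.
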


\begin{remark} 
    For Dirichlet boundary data, momentum is in general not conserved. 
\end{remark}

The paper is structured as follows. In \cref{sec:x_to_z} we provide a change of variables which turns the wave operator with variable wave speed in \eqref{eq:IP} into a constant coefficient operator with a convenient factorization. In \cref{sec:linear_results} we collect all results on the linear wave equation that is obtained from the change of variables in \cref{sec:x_to_z}. \cref{sec:main_proof} contains the proof of the existence and uniqueness part of the main result of \cref{thm:main} under an extra assumption which will removed in the subsequent \cref{sec:energy}. This section also contains the proof of energy and momentum conservation as stated in Theorem~\ref{thm:conservation}, and the fact that $C^1$-solutions of \eqref{eq:IP} in the sense of Definition~\ref{def:solution:x} are also weak solutions, cf. \cref{prop:c1_implies_weak}. The wellposedness part of \cref{thm:main} can be found in \cref{sec:wellposedness}. Finally, in \cref{sec:regularity} we verify that the breather solutions from \cite{kohler_reichel} satisfy \eqref{eq:IP} with their own initial values. The \cref{appendix_A,sec:sobolevSpaces} contain some technical results used in the proofs of the main results.

\section{A change of variables}
\label{sec:x_to_z}

It will be convenient to normalize the wave speed to $1$. To achieve this, we introduce a new variable $z = \kappa(x) = \int_{0}^{x} \frac{1}{c(s)} \der s$, and thus a new coordinate system $(z, t)$. Avoiding new notation we denote the functions $V, c, u, u_0, u_1$ transformed into this new coordinate system again by $V, c, u, u_0, u_1$. The relation between the two coordinate systems is given by
\begin{align*}
	\pdv{z}{x} = \frac{1}{c(x)} \quad\text{or}\quad c(x) \partial_x = \partial_z  \quad\text{or}\quad \der x = c(x) \der z.
\end{align*}
From now on until the end of \cref{sec:energy}, we will exclusively work with the coordinate system $(z, t)$. As before we denote the points where $c$ is discontinuous by $D(c)$ and the points where $c_z$ is discontinuous by $D(c_z)$.

Formally the initial value problem \eqref{eq:IP} transforms into
\begin{align} \label{eq:IP_z} 
	\begin{cases}
		u_{tt}(z, t) - u_{zz}(z, t) = -\frac{c_z(z)}{c(z)}u_z(z,t), &z \in [0, \infty), t \in [0, \infty), \\
		\frac{1}{c(0)}u_z(0, t) = (f(u_t(0,t)))_t , & t \in [0, \infty), \\
		u(z,0) = u_0(z), u_t(z, 0) = u_1(z), & z \in [0, \infty).
	\end{cases}
\end{align}

where we need to take into account that $u_x=\frac{1}{c} u_z$ is continuous (and not $u_z$ itself) and that the differential equation does not hold at the discontinuities of $c$ and $c_z$. A detailed definition of the solution concept is given below in \cref{def:solution:z}.

We begin by rephrasing \cref{def:admissible_domain:x,def:solution:x} for the new coordinate system.

\begin{definition}[admissible domain] \label{def:admissible_domain:z}
	We call a set $\Omega \subseteq [0, \infty) \times [0, \infty)$ an \emph{admissible domain} if it is of the form
	\begin{align*}
		\Omega = \{(z, t) \in [0, \infty) \times [0, \infty) \mid t \leq h(z)\}
	\end{align*}
	where $h \equiv +\infty$ or $h \colon [0, \infty) \to \R$ is Lipschitz continuous with Lipschitz constant $1$. We denote its relative interior by
	\begin{align*}
		\Omega^\circ \coloneqq \{(z, t) \in [0, \infty) \times [0, \infty) \mid t < h(z)\}.
	\end{align*}
\end{definition}

Next we introduce function spaces that capture the condition of the continuity of $\frac{1}{c} u_z$.
\begin{definition}[$x$-dependent function spaces]
	Let the transformation between $(x, t)$ and $(z, t)$-coordinates be given by $\tilde \kappa(x, t) \coloneqq (\kappa(x), t)=(z,t)$. For $\Omega \subseteq [0, \infty) \times [0, \infty)$ we write
	\begin{align*}
		\spacext{\Omega} \coloneqq \{u \colon \Omega \to \R \mid u \circ \tilde \kappa \in C^1(\tilde \kappa^{-1}(\Omega))\}
	\end{align*}
	where we understand $u$ to be a function of $(z, t)$ variables, and $\tilde u \coloneqq u \circ \tilde \kappa$ is the $(x, t)$-dependent version of $u$, i.e. $\tilde u(x, t) = u(z, t)$ holds.
	Note that $u \in C^1_{(x, t)}(\Omega)$ if and only if $u, u_t, \frac{1}{c}u_z \in C(\Omega)$.

	Similarly, for an interval $I \subseteq [0, \infty)$ we define
	\begin{align*}
		\spacex{I} \coloneqq \{v \colon I \to \R \mid v \circ \kappa \in C^1(\kappa^{-1}(I)) \}.
	\end{align*}
	where again we understand $v$ to be a function of $z$.
\end{definition}

\begin{definition}[solution] \label{def:solution:z}
	A function $u \in \spacext{\Omega}$ on an admissible domain $\Omega$ is called a $C^1$-solution to \eqref{eq:IP_z} if the following hold:
	\begin{enumerate}[(i)]
		\item For all $(z, t) \in \Omega \setminus (D(c)\cup D(c_z) \times \R)$ we have $(\partial_t - \partial_z)(u_t + u_z)(z, t) = - \frac{c_z(z)}{c(z)} u_z(z, t)$.

		\item $f(u_t(0,t))_t=\frac{1}{c(0)} u_z(0, t)$ for all $(0, t) \in \Omega^\circ$.

		\item $u(z, 0) = u_0(z)$ for all $(z, 0) \in \Omega$, $u_t(z, 0) = u_1(z)$ for all $(z, 0) \in \Omega^\circ$.
	\end{enumerate}
\end{definition}

\begin{remark} Note that $u \colon \Omega\to\R$ is a $C^1$-solution to \eqref{eq:IP} in the $(x,t)$-coordinates if and only if it is a $C^1$-solution to \eqref{eq:IP_z} in the $(z,t)$-coordinates.
\end{remark}

\section{Auxiliary results on the linear part}\label{sec:linear_results}

In this section we gather some auxiliary results and estimates on the linear wave equation. These will prove useful for the study of the nonlinear boundary condition. All results of this section hold under the assumptions (A0)--(A3).

We first note that the wave equation has finite speed of propagation; if we know its behavior at time $t_0$ on an interval $[z_0 - r, z_0 + r]$, then we can defer its accurate behavior on the space-time triangle with corners $(z_0 - r, t_0)$, $(z_0 + r, t_0)$ and $(z_0, t_0 + r)$.

\begin{definition}
	For $(z_0, t_0) \in \R^2$ and $r > 0$ we denote the triangle with corners $(z_0 - r, t_0)$, $(z_0 + r, t_0)$ and $(z_0, t_0 + r)$ by
	\begin{align*}
		\tri(z_0, t_0, r) \coloneqq \{(z, t) \in \R^2 \mid t \geq t_0, \abs{z - z_0} + \abs{t - t_0} \leq r\},
	\end{align*}
	its base projected onto the $z$-axis is given by $P_z \tri(z_0, t_0, r) = [z_0 - r, z_0 + r]$ with projection $P_z(z, t) \coloneqq z$. Similarly, we define left and right half triangles 
	\begin{align*}
		&\tri_-(z_0, t_0, r) \coloneqq \tri(z_0, t_0, r)\cap \{z\leq z_0\}, &&\tri_+(z_0, t_0, r) \coloneqq \tri(z_0, t_0, r)\cap \{z\geq z_0\}	
	\intertext{whose bases are given by} 
		&P_z \tri_-(z_0, t_0, r) = [z_0 - r, z_0], &&P_z \tri_-(z_0, t_0, r) = [z_0, z_0 + r].
	\end{align*}
\end{definition}

Recall the solution formula for the 1-dimensional wave equation:

\begin{theorem}\label{thm:wave_solution_formula}
	Let $(z_0, t_0) \in \R^2$, $r > 0$, $\tri \coloneqq \tri(z_0, t_0, r)$ and $B \coloneqq P_z \tri$. Assume that $u_0 \in C^1(B)$, $u_1 \in C(B)$, and $g\in L^\infty(\tri)$ is continuous outside a set $L$ consisting of finitely many lines of the form $\{z = \text{const}\}$. Then the function
	\begin{align*}
		u(z, t) 
		= \tfrac12\left(u_0(z + t - t_0) + u_0(z - t + t_0)\right) 
		+ \tfrac12 \int_{z-t+t_0}^{z+t-t_0} u_1(y) \der y 
		+ \tfrac12 \int_{\tri(z, t_0, t - t_0)} g(y, \tau) \der (y, \tau)
	\end{align*}
	belongs to $C^1(\tri)$ and is the unique $C^1$-solution of the problem
	\begin{align*} 
		\begin{cases}
			(\partial_t - \partial_z) (u_t + u_z) = g, & (z, t) \in \tri, \\
			u(z, t_0) = u_0(z), \quad u_t(z, t_0) = u_1(z), & z \in B
		\end{cases}
	\end{align*}
	in the following sense: $u(\cdot,t_0)=u_0(\cdot)$, $u_t(\cdot,t_0)=u_1(\cdot)$ on $B$ and the directional derivative $(\partial_t - \partial_z) (u_t + u_z)$ exists and equals $g$ on $\tri^\circ\setminus L$.	
\end{theorem}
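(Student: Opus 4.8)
The plan is to verify directly that the claimed explicit $u$ has the stated regularity and satisfies the problem, and then to establish uniqueness. First I would treat the three summands separately. The homogeneous part $\tfrac12(u_0(z+t-t_0)+u_0(z-t+t_0))$ lies in $C^1(\tri)$ because $u_0\in C^1(B)$ and the arguments $z\pm(t-t_0)$ stay in $B$ for $(z,t)\in\tri$; its value at $t=t_0$ is $u_0(z)$ and its $t$-derivative at $t_0$ vanishes. The term $\tfrac12\int_{z-t+t_0}^{z+t-t_0}u_1(y)\der y$ is $C^1$ since $u_1\in C(B)$, equals $0$ at $t=t_0$, and has $t$-derivative $\tfrac12(u_1(z+t-t_0)+u_1(z-t+t_0))$, which at $t_0$ gives $u_1(z)$. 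For these two classical pieces one checks $(\partial_t-\partial_z)(u_t+u_z)=0$ pointwise (indeed $\partial_t^2-\partial_z^2$ annihilates any function of $z+t$ or $z-t$). So the only real work is the Duhamel/source term
\[
w(z,t)\coloneqq \tfrac12\int_{\tri(z,t_0,t-t_0)}g(y,\tau)\der(y,\tau),
\]
and the linearity of the equation lets us add the pieces at the end.

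Next I would analyze $w$. Writing the triangle integral as an iterated integral, $w(z,t)=\tfrac12\int_{t_0}^{t}\!\int_{z-(t-\tau)}^{z+(t-\tau)}g(y,\tau)\der y\,\der\tau$, so that with $g\in L^\infty(\tri)$ one immediately gets $w$ continuous and $|w|\le \tfrac12\|g\|_\infty (t-t_0)^2$, hence $w(\cdot,t_0)=0$. Differentiating in $t$ under the integral sign (justified by dominated convergence, using $\|g\|_\infty<\infty$) gives
\[
w_t(z,t)=\tfrac12\int_{z-(t-t_0)}^{z+(t-t_0)}g(y,t_0)\,\der y \;+\; \tfrac12\int_{t_0}^{t}\bigl(g(z+(t-\tau),\tau)+g(z-(t-\tau),\tau)\bigr)\der\tau,
\]
but a cleaner route is to use the characteristic change of variables $\xi=z+t$, $\eta=z-t$: under $g\mapsto \tilde g$ the operator $\partial_t-\partial_z = -2\partial_\xi$ and $\partial_t+\partial_z=2\partial_\eta$, and $w$ becomes the double antiderivative $w=-\tfrac14\int\!\!\int \tilde g\,\der\xi\,\der\eta$ over the appropriate coordinate rectangle. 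In these coordinates $\partial_\eta w$ is an integral of $\tilde g$ over one characteristic segment (continuous in $(\xi,\eta)$ because $\tilde g\in L^\infty$), and then $\partial_\xi\partial_\eta w = -\tfrac14\tilde g$ holds at every point where $\tilde g$ is continuous, i.e. off the finitely many lines $L$. Translating back, $w\in C^1(\tri)$ with $w_t,w_z$ continuous, and $(\partial_t-\partial_z)(w_t+w_z)=g$ on $\tri^\circ\setminus L$; along $t=t_0$ one reads off $w=0$, $w_t=0$. Care is needed only in checking that the first derivatives $w_t$, $w_z$ extend continuously up to the line(s) in $L$ and to the boundary of $\tri$ — here one uses that $g$ is bounded and the lines are of the form $\{z=\text{const}\}$, so each one-sided limit of the relevant characteristic integrals exists and they agree (the jump only appears at the level of the second mixed derivative).

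Finally, uniqueness. Suppose $u,\tilde u\in C^1(\tri)$ both solve the problem; then $v\coloneqq u-\tilde u\in C^1(\tri)$ has $v(\cdot,t_0)=0$, $v_t(\cdot,t_0)=0$ on $B$, and $(\partial_t-\partial_z)(v_t+v_z)=0$ on $\tri^\circ\setminus L$. Set $p\coloneqq v_t+v_z\in C(\tri)$. Along each forward characteristic $\tau\mapsto(z_0-(\tau-t_0),\tau)$ of slope $-1$, $p$ is constant on the subintervals between crossings of the lines in $L$ (since $(\partial_t-\partial_z)p=0$ there and $p$ is continuous), hence constant on the whole characteristic by continuity; as it starts at $p=v_t+v_z=0$ on $t=t_0$, we get $p\equiv0$ on $\tri$. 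Thus $v_t+v_z\equiv0$, so $v$ is constant along characteristics of slope $-1$; since $v=0$ on $t=t_0$, $v\equiv0$ on $\tri$. (Equivalently: $v_t+v_z=0$ and $v|_{t=t_0}=0$ force $v=0$ by integrating along the same characteristics.) This gives uniqueness in exactly the sense stated.

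The main obstacle I anticipate is the regularity bookkeeping for $w$ near the discontinuity lines $L$ and near $\partial\tri$: showing that $w_t$ and $w_z$ are genuinely continuous on all of $\tri$ (not merely on $\tri^\circ\setminus L$) despite $g$ only being $L^\infty$ and discontinuous across those lines. The clean way around it is the characteristic-coordinate representation $w=-\tfrac14\int\!\!\int\tilde g$, which exhibits $w_\xi$ and $w_\eta$ as ordinary integrals of a bounded function over segments depending continuously on the base point, so their continuity up to the boundary and across $\{z=\text{const}\}$ lines is a routine dominated-convergence argument; the discontinuity of $g$ is then confined to the second derivative $w_{\xi\eta}=-\tfrac14\tilde g$, matching the ``exists and equals $g$ on $\tri^\circ\setminus L$'' formulation.
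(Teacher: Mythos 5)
The paper states this result without proof (``Recall the solution formula\ldots''), so there is no in-text argument to compare against; you are essentially supplying the standard verification of the inhomogeneous d'Alembert formula, and your strategy -- decompose into homogeneous + $u_1$ + Duhamel terms, treat the Duhamel term in characteristic coordinates, and prove uniqueness by integrating $p=v_t+v_z$ along the backward characteristics -- is the right one and, carried out carefully, closes the proof. A few slips to fix before writing it up. First, your displayed formula for $w_t$ is wrong: the boundary term $\tfrac12\int_{z-(t-t_0)}^{z+(t-t_0)}g(y,t_0)\der y$ should not be there. Writing $w=\tfrac12\int_{t_0}^{t}F(t,\tau)\der\tau$ with $F(t,\tau)=\int_{z-(t-\tau)}^{z+(t-\tau)}g(y,\tau)\der y$, Leibniz gives $w_t=\tfrac12 F(t,t)+\tfrac12\int_{t_0}^{t}\partial_t F\der\tau$, and $F(t,t)=0$, so $w_t=\tfrac12\int_{t_0}^{t}\bigl(g(z+(t-\tau),\tau)+g(z-(t-\tau),\tau)\bigr)\der\tau$ only. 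You discard this formula in favour of characteristic coordinates, so the error does not propagate, but it should be corrected. Second, with $\xi=z+t$, $\eta=z-t$ one has $\partial_t-\partial_z=-2\partial_\eta$ and $\partial_t+\partial_z=2\partial_\xi$ (you swapped them), and the region of integration for $w$ in $(\xi,\eta)$ variables is a triangle $\{\eta\le\eta'\le\xi'\le\xi\}$, not a rectangle; neither affects the conclusion but both should be stated correctly. Third, in the uniqueness step, from $v_t+v_z\equiv 0$ you conclude that $v$ is ``constant along characteristics of slope $-1$'': this is the \emph{opposite} family ($\partial_t+\partial_z$ differentiates along slope $+1$), and likewise the parenthetical ``by integrating along the same characteristics'' is misleading -- you integrate $p$ along the $(\partial_t-\partial_z)$-characteristics to get $p\equiv0$, and then integrate $v$ along the $(\partial_t+\partial_z)$-characteristics to get $v\equiv0$. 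Since both families of lines connect every interior point of $\tri$ to the base $\{t=t_0\}$, the argument still works after this correction. Finally, the point that deserves the most care and that you rightly flag is the continuity of $w_\xi$ and $w_\eta$ up to $\partial\tri$ and across the discontinuity lines: since $g\in L^\infty$ and its discontinuity set is a finite union of lines (hence measure zero on each characteristic segment), dominated convergence applied to $w_\xi(\xi,\eta)=\tfrac14\int_\eta^\xi \tilde g(\xi,\eta')\der\eta'$ and $w_\eta(\xi,\eta)=-\tfrac14\int_\eta^\xi \tilde g(\xi',\eta)\der\xi'$ does give continuity, and then $\partial_\eta\partial_\xi w=-\tfrac14\tilde g$ holds at points of continuity of $\tilde g$ by the fundamental theorem of calculus. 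With the sign and slope corrections the argument is complete.
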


\begin{remark} \label{rem:opposite_factorization} 
    For every $C^1$-solution $u$ of $(\partial_t - \partial_z) (u_t + u_z) = g$ on a domain we have that $(\partial_t + \partial_z) (u_t - u_z) = (\partial_t - \partial_z) (u_t + u_z)$ wherever $g$ is continuous, cf. Schwarz's theorem in \cite[Theorem 9.41]{rudin_analysis}. As a consequence, any of the two factorizations of the wave operator $(\partial_t-\partial_z)(\partial_t+\partial_z)$ or $(\partial_t+\partial_z)(\partial_t-\partial_z)$ can be used and yields the same solution. 
\end{remark}

By combining the above \cref{thm:wave_solution_formula} with a fixed point argument, we can treat the initial value problem for $\left(\partial_t - \partial_z\right) \left(u_t + u_z\right) = - \frac{c_z(z)}{c(z)} u_z$ on sufficiently small triangles $\tri$. In order to have a slightly more general situation available we work with a piecewise continuous function $\lambda$ instead of $\frac{c_z}{c}$. 

\begin{corollary}\label{cor:linear_uniqueness}
	Let $(z_0, t_0) \in \R^2$ and $\tri \coloneqq \tri(z_0, t_0, r)$, $B \coloneqq P_z \tri$ for $r>0$. Assume $u_0 \in C^1(B)$, $u_1 \in C(B)$ and $\lambda \in PC(B)$ such that $r \norm{\lambda}_\infty < 1$. Then
	\begin{align} \label{local_problem}
		\begin{cases}
			(\partial_t - \partial_z) (u_t + u_z) = -\lambda(z) u_z, & (z, t) \in \tri,\\
			u(z, t_0) = u_0(z), u_t(z, t_0) = u_1(z), & z \in B
		\end{cases}
	\end{align} 
	has a unique solution $u\in C^1(\tri)$ in the sense of Theorem~\ref{thm:wave_solution_formula} with $g=-\lambda u_z$ and $L=D(\lambda)\times\R$. We denote this solution by $\Phi(u_0, u_1) \coloneqq u$. 
\end{corollary}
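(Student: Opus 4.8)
The plan is to recast \eqref{local_problem} as a fixed-point equation on the triangle $\tri$ and apply the Banach fixed-point theorem. By \cref{thm:wave_solution_formula} applied with $g = -\lambda(z) u_z(z,t)$, a function $u \in C^1(\tri)$ solves \eqref{local_problem} in the stated sense if and only if it is a fixed point of the map $\mathcal{T}$ defined by
\begin{align*}
	(\mathcal{T} u)(z, t) = \tfrac12\left(u_0(z + t - t_0) + u_0(z - t + t_0)\right) + \tfrac12 \int_{z-t+t_0}^{z+t-t_0} u_1(y) \der y - \tfrac12 \int_{\tri(z, t_0, t - t_0)} \lambda(y) u_z(y, \tau) \der (y, \tau).
\end{align*}
Here I should check that $g = -\lambda u_z$ meets the hypotheses of \cref{thm:wave_solution_formula}: it lies in $L^\infty(\tri)$ since $\lambda \in PC(B)$ is bounded and $u_z$ is continuous on the compact set $\tri$, and it is continuous off $L = D(\lambda) \times \R$, which is a finite union of vertical lines because $D(\lambda)$ is discrete in the compact interval $B$. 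So $\mathcal{T}$ maps $C^1(\tri)$ into $C^1(\tri)$.

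The key step is the contraction estimate. I would equip $C^1(\tri)$ with a norm controlling $u$, $u_t$ and $u_z$ uniformly on $\tri$ — in fact, since $\mathcal{T} u$ only depends on $u$ through $u_z$, it suffices to work on the closed subspace where we track $\norm{u_z}_{\infty}$ after noting that differences $\mathcal{T}u - \mathcal{T}v$ have vanishing data, so by the same solution formula $\partial_z(\mathcal{T}u - \mathcal{T}v)$ is again an integral of $\lambda(u_z - v_z)$ over a family of characteristic triangles. Differentiating the solution formula in $z$, the term $\partial_z \int_{\tri(z, t_0, t-t_0)} \lambda u_z \der(y,\tau)$ becomes an integral of $\lambda u_z$ over the two slanted sides of the triangle (the $z$-derivative of the domain), whose total length is bounded by $2(t - t_0) \le 2r$; together with the factor $\tfrac12$ this yields
\begin{align*}
	\norm{\partial_z(\mathcal{T}u - \mathcal{T}v)}_{\infty} \le r \norm{\lambda}_{\infty} \norm{u_z - v_z}_{\infty},
\end{align*}
and by hypothesis $r\norm{\lambda}_\infty < 1$, so $\mathcal{T}$ is a contraction in the $u_z$-seminorm. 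One then recovers contraction in a full $C^1$-norm by noting that $u$ and $u_t$ of the difference are controlled by integrating $u_z$ of the difference over characteristic segments of length $\le r$, so after possibly passing to an equivalent norm (or iterating $\mathcal{T}$ twice) the map is a genuine contraction on $C^1(\tri)$. Banach's theorem then gives a unique fixed point $u = \Phi(u_0, u_1)$.

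The main obstacle I anticipate is bookkeeping the differentiation of the domain integral $\int_{\tri(z, t_0, t - t_0)} \lambda u_z$ with respect to $z$ and $t$ carefully enough to get the clean constant $r\norm{\lambda}_\infty$, especially since $\lambda$ is only piecewise continuous, so the slanted boundary sides may cross discontinuity lines of $\lambda$; one must check the relevant one-dimensional integrals are still well-defined and the fundamental-theorem-of-calculus step is valid (it is, since $\lambda$ is bounded and has only finitely many jumps in $B$, and the integrand $\lambda u_z$ restricted to a characteristic line is piecewise continuous hence Riemann integrable). A secondary point is confirming that the fixed point obtained in $C^1(\tri)$ genuinely satisfies condition of \cref{thm:wave_solution_formula} with the exceptional set $L = D(\lambda) \times \R$ — but this is immediate from the ``if and only if'' in \cref{thm:wave_solution_formula} once we know $g = -\lambda u_z$ satisfies its hypotheses, which we checked above.
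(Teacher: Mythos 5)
Your proposal is correct and rests on the same ideas as the paper's proof: convert \eqref{local_problem} into a fixed-point equation via the representation formula of \cref{thm:wave_solution_formula}, observe that the integral term depends on $u$ only through $u_z$, and contract with constant $q = r\norm{\lambda}_\infty < 1$. The difference is where the fixed-point argument is actually run. You keep $u \in C^1(\tri)$ as the unknown, so what you immediately obtain is contraction in the $\norm{u_z}_\infty$-seminorm, and you must then upgrade this to contraction in a genuine $C^1$-norm. The paper sidesteps that step entirely: it differentiates the representation formula in $z$ and poses the Banach fixed-point problem directly for $u_z$ in $C(\tri)$, where the ordinary sup-norm already contracts with constant $q$, and then reconstructs $u$ from the fixed point $u_z$ by substituting back into the undifferentiated formula. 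One caveat about your parenthetical alternative: iterating $\mathcal{T}$ twice is not in general enough. The constants bounding $\norm{\mathcal{T}u - \mathcal{T}v}_\infty$ and $\norm{\partial_t(\mathcal{T}u - \mathcal{T}v)}_\infty$ in terms of $\norm{u_z - v_z}_\infty$ work out to $\tfrac{rq}{2}$ and $q$ respectively, and $\tfrac{rq}{2}$ is not controlled by the hypothesis $q < 1$ when $r$ is large, so the $C^1$-operator norm of $\mathcal{T}^2$ need not drop below $1$; one would have to iterate a number of times that grows with $r$. The weighted equivalent-norm route works uniformly and is the right way to finish your version, though the paper's direct reformulation in $C(\tri)$ is cleaner. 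Your checks that $g = -\lambda u_z$ meets the hypotheses of \cref{thm:wave_solution_formula} and that $D(\lambda)$ is harmless are both fine and match the paper.
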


\begin{remark} \label{rem:symmetry} 
		If additionally $u_0, u_1$ are odd around $z=z_0$ and $\lambda$ is odd around $z=z_0$, then the solution of \eqref{local_problem} is odd around $z=z_0$. To see this, notice that under these assumptions the odd reflection of the solution $u$ of \eqref{local_problem} again solves \eqref{local_problem} -- but with the opposite factorization of the wave operator. Hence, by \cref{rem:opposite_factorization} and uniqueness of solutions, $u$ coincides with its odd reflection.
\end{remark}

\begin{proof}[Proof of \cref{cor:linear_uniqueness}]
	W.l.o.g. we assume $(z_0, t_0) = (0, 0)$. Let $u \in C^1(\tri)$. Then by \cref{thm:wave_solution_formula} $u$ is a solution if and only if
	\begin{align} \label{eq:loc:solution_representation}
		u(z, t) 
		= \tfrac12\left(u_0(z + t) + u_0(z - t)\right) 
		+ \tfrac12 \int_{z-t}^{z+t} u_1(y) \der y 
		- \tfrac12 \int_{\tri(z, 0, t)} \lambda(y) u_z(y, \tau) \der (y, \tau)
	\end{align}
	holds for $(z, t) \in \tri$.
	Taking the derivative w.r.t. $z$ we obtain
	\begin{equation} \label{eq:linear_fixedpoint_derived}
		\begin{split}
			u_z(z, t) 
			&= \tfrac12\left(u_0'(z + t) + u_0'(z - t)\right) 
			+ \tfrac12 \left(u_1(z + t) - u_1(z - t)\right) \\
			&\quad- \tfrac12 \int_{0}^{t} \lambda(z + t - s) u_z(z + t - s, s) \der s + \tfrac12 \int_{0}^{t} \lambda(z - t + s) u_z(z - t + s, s) \der s.
		\end{split}
	\end{equation}
	We consider \eqref{eq:linear_fixedpoint_derived} as a fixed point problem for $u_z\in C(\tri)$. If we denote the right-hand side of \eqref{eq:linear_fixedpoint_derived} by $T(u_z)(z, t)$, then clearly $T$ maps $C(\tri)$ into itself. Furthermore, one has
		\begin{align*}
			&\norm{T(u_z) - T(w_z)}_\infty  \\
			&\quad= \tfrac12 \sup_{(z, t)\in\tri} \abs{-\int_{0}^{t} \lambda(z + s)\,[u_z - w_z](z + s, t - s)\der s + \int_{0}^{t} \lambda(z - s) \,[u_z - w_z](z - s, t - s) \der s} \\
			&\quad \leq \norm{\lambda}_\infty r \cdot \norm{u_z - w_z}_\infty
		\end{align*}
	so that by Banach's fixed-point theorem there exists a unique solution $u_z$ of \eqref{eq:linear_fixedpoint_derived}. With the help of $u_z$ we define $u$ as in \eqref{eq:loc:solution_representation} and thus get the claimed result. 
\end{proof}

In the setting of the above proof, we can obtain estimates on the solution $u$.
First, if we set $q\coloneqq r \norm{\lambda}_\infty$, then by Banach's fixed-point theorem we have
\begin{align*}
	\norm{u_z - 0}_\infty \leq \frac{1}{1 - q} \norm{T(0) - 0}_\infty.
\end{align*}

Using $\norm{T(0)}_\infty \leq \norm{u_0'}_\infty + \norm{u_1}_\infty$, we obtain
\begin{align*}
	\norm{u_z}_\infty \leq \frac{1}{1 - q} \left(\norm{u_0'}_\infty + \norm{u_1}_\infty\right)
\end{align*}
From 
\begin{align*}
	u(z, t) 
	&= \tfrac12\left(u_0(z + t) + u_0(z - t)\right) 
	+ \tfrac12 \int_{z - t}^{z + t} u_1(y) \der y 
	- \tfrac12 \int_{0}^{t} \int_{z - (t - \tau)}^{z + (t - \tau)} \lambda(y) u_z(y, \tau) \der y \der \tau, \\
	u_t(z, t) 
	&= \tfrac12\left(u_0'(z + t) - u_0'(z - t)\right) 
	+ \tfrac12 \left(u_1(z + t) + u_1(z - t)\right) \\
	&\quad- \tfrac12 \int_{0}^{t} \lambda(z + s) u_z(z + s, t - s) \der s - \tfrac12 \int_{0}^{t} \lambda(z - s) u_z(z - s, t - s) \der s
\end{align*}
we also obtain
\begin{align*}
	\norm{u}_\infty 
	\leq \norm{u_0}_\infty + r \norm{u_1}_\infty + \tfrac{1}{2}r^2 \norm{\lambda}_\infty \norm{u_z}_\infty, 
	\qquad
	\norm{u_t}_\infty 
	\leq \norm{u_0'}_\infty + \norm{u_1}_\infty + r \norm{\lambda}_\infty \norm{u_z}_\infty.
\end{align*}
Combining these estimates, we get the following result.

\begin{corollary}\label{cor:operator_estimates} In the setting of \cref{cor:linear_uniqueness}, the following estimates hold with $q \coloneqq r\norm{\lambda}_\infty$:
	\begin{align*}
		\norm{u}_\infty &\leq \norm{u_0}_\infty + \frac{r q}{2(1 - q)} \norm{u_0'}_\infty + \frac{r(1-\frac{1}{2}q)}{1 - q} \norm{u_1}_\infty, \\
		\norm{u_z}_\infty &\leq \frac{1}{1 - q} \left(\norm{u_0'}_\infty + \norm{u_1}_\infty\right), \\
		\norm{u_t}_\infty &\leq \frac{1}{1 - q} \left(\norm{u_0'}_\infty + \norm{u_1}_\infty\right).
	\end{align*}
	In particular, there exists a constant $C = C(r, \norm{\lambda}_\infty)$ such that the operator-norm of the linear solution operator $\Phi:C^1(B) \times C(B) \to C^1(\tri)$, which maps the data $(u_0,u_1)\in C^1(B) \times C(B)$ to the solution of \eqref{local_problem}, satisfies
	\begin{align*}
		\norm{\Phi} \leq C.
	\end{align*}
\end{corollary}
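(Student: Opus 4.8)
The three supremum bounds have, in essence, already been extracted in the computation preceding the statement, so the plan is mainly to record them cleanly and then combine them. First I would invoke Banach's fixed-point theorem exactly as in the proof of \cref{cor:linear_uniqueness}: the map $T$ on $C(\tri)$ introduced there is a contraction with constant $q = r\norm{\lambda}_\infty < 1$ whose unique fixed point is $u_z$, so $\norm{u_z}_\infty = \norm{u_z - 0}_\infty \le (1-q)^{-1}\norm{T(0)}_\infty$. Reading $T(0)$ off from \eqref{eq:linear_fixedpoint_derived} and estimating $\abs{\tfrac12(u_0'(z+t)+u_0'(z-t))}\le\norm{u_0'}_\infty$ and $\abs{\tfrac12(u_1(z+t)-u_1(z-t))}\le\norm{u_1}_\infty$ gives $\norm{T(0)}_\infty\le\norm{u_0'}_\infty+\norm{u_1}_\infty$, hence the stated bound on $\norm{u_z}_\infty$.

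For $u_t$ and $u$ I would then feed this bound into their explicit integral representations (the two displays just before the statement). The representation of $u_t$ yields $\norm{u_t}_\infty\le\norm{u_0'}_\infty+\norm{u_1}_\infty+r\norm{\lambda}_\infty\norm{u_z}_\infty=\norm{u_0'}_\infty+\norm{u_1}_\infty+q\norm{u_z}_\infty$, and substituting the bound on $\norm{u_z}_\infty$ together with $1+\tfrac{q}{1-q}=\tfrac{1}{1-q}$ gives the claimed estimate. For $u$ the representation gives $\norm{u}_\infty\le\norm{u_0}_\infty+r\norm{u_1}_\infty+\tfrac12 r^2\norm{\lambda}_\infty\norm{u_z}_\infty$; writing $r^2\norm{\lambda}_\infty=rq$, inserting the bound on $\norm{u_z}_\infty$, and collecting the coefficient of $\norm{u_1}_\infty$ via $r+\tfrac{rq}{2(1-q)}=\tfrac{r(1-\tfrac12 q)}{1-q}$ produces exactly the first inequality. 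This piece of bookkeeping is the only place where any care is needed.

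Finally, for the operator-norm statement I would first observe that $\Phi$ is linear: in \eqref{eq:linear_fixedpoint_derived} we may write $T(w)=\Lambda(u_0,u_1)+S(w)$ with $S$ a bounded linear operator on $C(\tri)$ that does not depend on the data and $\Lambda$ linear in $(u_0,u_1)$, so $u_z=(I-S)^{-1}\Lambda(u_0,u_1)$ depends linearly on $(u_0,u_1)$, and then $u$ does too through its representation formula. Equipping $C^1(B)\times C(B)$ with the norm $\norm{u_0}_\infty+\norm{u_0'}_\infty+\norm{u_1}_\infty$ and $C^1(\tri)$ with $\norm{u}_\infty+\norm{u_z}_\infty+\norm{u_t}_\infty$, summing the three inequalities gives $\norm{\Phi(u_0,u_1)}_{C^1(\tri)}\le C\bigl(\norm{u_0}_\infty+\norm{u_0'}_\infty+\norm{u_1}_\infty\bigr)$ with $C$ depending only on the coefficients that appeared above, that is, only on $r$ and $\norm{\lambda}_\infty$; this is the asserted bound $\norm{\Phi}\le C$. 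There is no substantial obstacle: the work is entirely in the elementary estimates and the algebraic simplifications already indicated in the text, plus the (routine) linearity observation.
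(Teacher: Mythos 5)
Your proposal is correct and matches the paper's derivation essentially line for line: the Banach fixed-point estimate $\norm{u_z}_\infty\le(1-q)^{-1}\norm{T(0)}_\infty$, followed by substitution into the explicit integral representations of $u$ and $u_t$, and the same algebraic simplifications. The short linearity argument for $\Phi$ is a small (correct) addition the paper leaves implicit.
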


Recall that in \cref{def:solution:z} we required $\frac{u_z}{c}$ to be continuous. Since $c$ may have jumps, e.g. at $z_0$, we also need to treat the jump condition
\begin{align*}
	\frac{u_z(z_0+, t)}{c(z_0+)} = \frac{u_z(z_0-, t)}{c(z_0-)}.
\end{align*}
We prepare this in the following lemma by adding to \eqref{local_problem} the inhomogeneous Dirichlet condition $u(z_0, t) \overset{!}{=} b(t)$ at the spatial boundary $z=z_0$. 

\begin{lemma} \label{lem:dirichlet_phi}
	Let $(z_0, t_0) \in \R^2$ and $\tri_+ \coloneqq \tri_+(z_0, t_0, r)$, $B_+ \coloneqq P_z \tri_+$ for $r > 0$. Assume $u_0 \in C^1(B_+)$, $u_1 \in C(B_+), b \in C^1([t_0, t_0 + r])$ with $b(t_0) = u_0(z_0), b'(t_0) = u_1(z_0)$ and $\lambda \in PC(B_+)$ such that $r \norm{\lambda}_\infty < 1$. Then the problem
	\begin{align} \label{eq:pb:dirichlet_phi}
		\begin{cases}
			(\partial_t - \partial_z) (u_t + u_z) = - \lambda(z) u_z, & (z, t) \in \righttri^\circ, \\
			u(z_0, t) = b(t), & t \in [t_0, t_0 + r], \\
			u(z, t_0) = u_0(z), u_t(z, t_0) = u_1(z), & z \in B_+,
		\end{cases}
	\end{align} 
	has a unique $C^1$-solution $u \colon \righttri \to \R$ in the sense of \cref{thm:wave_solution_formula} with $g=-\lambda u_z$ and $L=D(\lambda)\times\R$. We denote this solution by $\Phi_+(b, u_0, u_1) \coloneqq u$. The assertion also holds for the right half triangle $\Delta_-\coloneqq \tri_-(z_0, t_0, r)$ with corresponding solution operator $\Phi_-$.
\end{lemma}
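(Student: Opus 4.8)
The plan is to reduce the mixed initial-Dirichlet problem on the half triangle $\righttri$ to the already-solved full-triangle problem of \cref{cor:linear_uniqueness} by an odd-reflection trick, after first subtracting off the Dirichlet data. Concretely, I would first use the solution formula along characteristics to see that on $\righttri$ the domain of dependence splits: for a point $(z,t)\in\righttri$ with $z\ge z_0$, the backward characteristics either both reach the initial line $\{t=t_0\}$ within $B_+$ (the part of $\righttri$ below the characteristic $t-t_0 = z-z_0$ through the corner $(z_0,t_0)$), or one of them hits the lateral boundary $\{z=z_0\}$ first, where the value is prescribed by $b$. This is the classical picture for the wave equation on a quadrant; the $-\lambda u_z$ term is lower order and will be absorbed by a contraction just as in \cref{cor:linear_uniqueness}.

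The key steps, in order: (1) Write down the explicit representation for $u$ and $u_z$ on $\righttri$ analogous to \eqref{eq:loc:solution_representation} and \eqref{eq:linear_fixedpoint_derived}, but with the d'Alembert kernel replaced by its odd-reflection-at-$z_0$ version so that the Dirichlet condition $u(z_0,\cdot)=b$ is built in; the inhomogeneity $b$ enters through a term involving $b$ and $b'$ evaluated at the reflected time $t_0 + (t-t_0) - (z-z_0)$. The compatibility conditions $b(t_0)=u_0(z_0)$, $b'(t_0)=u_1(z_0)$ are exactly what makes the resulting candidate $u$ lie in $C^1(\righttri)$ across the characteristic emanating from the corner. (2) Set up the fixed-point operator $T$ for $u_z\in C(\righttri)$ from this representation; the only new contributions compared to \cref{cor:linear_uniqueness} are the boundary terms, which do not depend on $u_z$, so the Lipschitz estimate $\norm{T(u_z)-T(w_z)}_\infty \le r\norm{\lambda}_\infty\norm{u_z-w_z}_\infty$ survives verbatim, and $r\norm{\lambda}_\infty<1$ gives a unique fixed point by Banach. (3) Define $u$ from the fixed point via the $u$-representation, check $u\in C^1(\righttri)$, that $u$ solves the equation on $\righttri^\circ\setminus L$ in the sense of \cref{thm:wave_solution_formula}, that it attains the initial data on $B_+$, and that $u(z_0,t)=b(t)$. (4) For $\tri_-$ and $\Phi_-$, repeat with the roles of left/right characteristics interchanged, or equivalently apply the $\righttri$ case to $\tilde u(z,t)\coloneqq u(2z_0-z,t)$, $\tilde\lambda(z)\coloneqq\lambda(2z_0-z)$, using \cref{rem:opposite_factorization} to pass between the two factorizations of the wave operator.

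A cleaner way to organize steps (1)–(3), which I would probably adopt to keep the bookkeeping light, is to first reduce to homogeneous Dirichlet data: pick any $\B\in C^1(\righttri)$ with $\B(z_0,t)=b(t)$, $\B(z,t_0)$ and $\B_t(z,t_0)$ agreeing with $u_0,u_1$ to first order along $\{t=t_0\}$ — for instance a suitable linear interpolation in $t$ — set $v\coloneqq u-\B$, and observe that $v$ solves a problem of the same type with zero Dirichlet data and a modified (still piecewise continuous, still $L^\infty$) right-hand side $g = -\lambda u_z - (\partial_t-\partial_z)(\B_t+\B_z)$ that depends on $v_z$ affinely. Then odd-reflect $v$ (and $g$, and $\lambda$) across $z=z_0$: since $v$ vanishes on $\{z=z_0\}$, the odd extension is $C^1$ there, and by \cref{cor:linear_uniqueness} applied on the full triangle $\tri(z_0,t_0,r)$ the reflected problem has a unique $C^1$-solution, which by the odd symmetry of its data and \cref{rem:symmetry} is itself odd; restricting back to $\righttri$ gives existence, and uniqueness follows because any $\righttri$-solution reflects to a $\tri$-solution. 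The main obstacle is step (1)/(3): verifying that the candidate is genuinely $C^1$ up to and across the corner characteristic $\{t-t_0=z-z_0\}$, which is where the initial data, the boundary data, and the reflection all meet — the first-order compatibility conditions on $b$ are used precisely here, and one must check the one-sided derivatives match. Once continuity of $u_z$ (hence of $u_t=u_t$ via the representation) across that line is established, everything else is a routine transcription of the arguments already carried out for \cref{cor:linear_uniqueness,cor:operator_estimates}.
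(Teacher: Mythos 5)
Your overall architecture — subtract off a function that carries the Dirichlet data, odd-reflect the remainder across $z=z_0$, and run the fixed-point argument from \cref{cor:linear_uniqueness} on the full triangle — is exactly what the paper does, and your observation that the corner compatibility conditions $b(t_0)=u_0(z_0)$, $b'(t_0)=u_1(z_0)$ are what keep the candidate $C^1$ across the characteristic $\{t-t_0=z-z_0\}$ is on the mark. Your alternative ``direct'' route via an explicit reflected d'Alembert kernel would also work and amounts to unfolding the same structure by hand.

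There is, however, a genuine gap in the ``cleaner'' version you say you would adopt. You ask for $\B\in C^1(\righttri)$ matching $\B(z_0,\cdot)=b$ \emph{and} $\B(\cdot,t_0)=u_0$, $\B_t(\cdot,t_0)=u_1$ along the whole initial line, and then need $(\partial_t-\partial_z)(\B_t+\B_z)$ to be a bona fide $L^\infty$ source. That cannot work: $u_0\in C^1$ and $u_1\in C$ only, so any $\B$ with $\B(\cdot,t_0)=u_0$ forces $\B_{zz}(\cdot,t_0)=u_0''$, which need not exist, and the modified right-hand side $g$ is not a function, so \cref{cor:linear_uniqueness} (or the argument in its proof) does not apply. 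The paper avoids this by choosing $G^b$ in \eqref{eq:def:homogenous_boundary_solution} to match \emph{only} the boundary data (plus first-order agreement at the single corner point $(z_0,t_0)$, not along $\{t=t_0\}$), and — crucially — to be an exact solution of the homogeneous wave equation (a left-traveling wave $b(t+z_0-z)$ plus an affine term), so that $(\partial_t-\partial_z)(G^b_t+G^b_z)\equiv 0$ and only the bounded coupling $-\lambda G^b_z$ appears in the equation for $v=u-G^b$; the residual initial data $v_0=u_0-b(t_0)$, $v_1=u_1-b'(t_0)$ vanish at $z_0$ and therefore admit continuous odd extensions. Your sketch would need to replace ``pick any $\B$'' by this specific $G^b$ (or an equally structured choice) for the source to be well defined and for $v_0,v_1$ to be odd-extendable.
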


\begin{proof}
	Note that the function $G^b$ defined on $\righttri$ by
	\begin{align}\label{eq:def:homogenous_boundary_solution}
		G^b(z, t) = \begin{cases}
			b(t_0) + (t - t_0) b'(t_0), & z - z_0 > t - t_0 \geq 0, \\
			b(t + z_0 - z) + (z - z_0) b'(t_0), &t - t_0 \geq z - z_0 \geq 0 \\
		\end{cases}
	\end{align}
	belongs to $C^1(\righttri)$, solves the homogenous wave equation $(\partial_t - \partial_z)(\partial_t + \partial_z) G^b = 0$ on $\righttri$, and satisfies $G^b(z_0, t) = b(t)$. Setting $v \coloneqq u - G^b$, problem \eqref{eq:pb:dirichlet_phi} can be rewritten as
	\begin{align} \label{eq:loc:new_problem}
		\begin{cases}
			(\partial_t - \partial_z) (v_t + v_z) = -\lambda(z)\left(v_z+G^b_z\right), & (z, t) \in \righttri^\circ, \\
			v(z_0, t) = 0, & t \in [t_0, t_0 + r], \\
			v(z, t_0) = u_0(z) - b(t_0) \eqqcolon v_0(z), &z \in B_+, \\
			v_t(z, t_0) = u_1(z) - b'(t_0) \eqqcolon v_1(z), &z \in B_+.
		\end{cases}
	\end{align}
	Note that $v_0(z_0) = v_1(z_0) = 0$ by assumption.
	If we extend the functions $v_0$, $v_1$, and $\lambda$ in an odd way and $G^b$ in an even way around $z=z_0$, we can consider the problem
	\begin{align} \label{eq:loc:odd_extension}
		\begin{cases}
			\left(\partial_t - \partial_z\right) (\tilde v_t + \tilde v_z) =  -\lambda_\text{odd}(z) \cdot \left(\tilde v_z + G^b_{\text{even},z}\right) & (z, t) \in \tri^\circ, \\
			\tilde v(z, t_0) = v_{0, \text{odd}}(z), &z \in B, \\
			\tilde v_t(z, t_0) = v_{1, \text{odd}}(z), &z \in B,
		\end{cases}
	\end{align}
	where $\tri \coloneqq \tri(z_0, t_0, r)$ and $B \coloneqq P_z \tri$. Arguing as in the proof of \cref{cor:linear_uniqueness}, we see that due to the Banach fixed-point theorem, \eqref{eq:loc:odd_extension} has a unique solution, which must be odd, cf. \cref{rem:symmetry}. Now, on one hand the solution of \eqref{eq:loc:odd_extension} solves (after restriction to $\tri_+$) \eqref{eq:loc:new_problem} and, on the other hand, after odd extension around $z=z_0$ every solution of \eqref{eq:loc:new_problem} solves \eqref{eq:loc:odd_extension}. This shows existence and uniqueness for \eqref{eq:loc:new_problem} and hence for \eqref{eq:pb:dirichlet_phi}.
\end{proof}

\begin{remark} \label{rem:estimate:boundary_solution_op}
	One can show that there exists a constant $C = C(r, \norm{\lambda}_{\infty})$ such that 
	\begin{align*}
		\Phi_\pm: C^1([t_0,t_0+r])\times C^1(B_\pm) \times C(B_\pm) \to C^1(\tri_{\pm})	
	\end{align*}
	satisfy $\|\Phi_\pm\|\leq C$. 
\end{remark}

When treating the nonlinear problem \eqref{eq:IP}, the operators $\Phi_{\pm}$ play an important role and the estimate in \cref{rem:estimate:boundary_solution_op} will be used. However, we need to investigate the dependency of $\Phi_{\pm}$ on the datum $b$ more precisely. This will be achieved next in the case where $u_0 = u_1 = 0$.

\begin{lemma}[Estimate on $\Phi_\pm$ in the case $u_0=u_1=0$]
	\label{lem:fine_boundary_estimate}
	Let $\tri_\pm$, and $\lambda$ be as in \cref{lem:dirichlet_phi} with $q \coloneqq r \norm{\lambda}_\infty < 1$. Assume $b\in C^1([t_0,t_0+r])$ and $b(t_0)=b'(t_0)=0$. Then for $u \coloneqq \Phi_\pm(b, 0, 0)$ one has
    \begin{align*}
    	\abs{u_z(z, t) \pm b'(m)} \leq  \alpha\abs{z - z_0} \abs{b'(m)} + \beta \int_{t_0}^{m} \abs{b'(\tau)} \der \tau,
	\end{align*}
	where $m \coloneqq \max\set{t_0, t - \abs{z - z_0}}, \alpha \coloneqq \frac{2}{4 - q} \norm{\lambda}_\infty$, and $\beta \coloneqq \frac{4}{(2-q)(4-q)} \norm{\lambda}_\infty$.
\end{lemma}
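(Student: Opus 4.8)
The plan is to normalize $z_0=t_0=0$ and to prove the estimate for $\Phi_+$ on $\tri_+$; the $\Phi_-$ case follows from the reflection $z\mapsto -z$, which exchanges $\tri_-$ and $\tri_+$, replaces $\lambda$ by a $PC$ function with the same sup norm, turns $\Phi_-$ into $\Phi_+$, and flips the sign of $u_z$ — accounting for the $\pm$ in the statement. Put $m(z,t)\coloneqq(t-z)_+$ and $\psi(z,t)\coloneqq u_z(z,t)+b'(m(z,t))$ for $u\coloneqq\Phi_+(b,0,0)$; the goal becomes $\abs{\psi(z,t)}\le\alpha\, z\,\abs{b'(m(z,t))}+\beta\int_0^{m(z,t)}\abs{b'(\tau)}\der\tau$ on $\tri_+$. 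On $\{(z,t)\in\tri_+:z\ge t\}$ the backward cone $\tri(z,0,t)$ lies in $\tri_+$ and avoids $\{z=0\}$, so \cref{cor:linear_uniqueness} (applied there, using the vanishing Cauchy data of $u$) gives $u\equiv0$, hence $\psi\equiv0$; since $m(z,t)=0$ and $b'(0)=0$, the asserted bound reads $0\le0$. It remains to treat $\Omega_+\coloneqq\{(z,t)\in\tri_+:z<t\}$.

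\textbf{A fixed-point equation on $\Omega_+$.} On $\Omega_+$ I would integrate the factored equation $(\partial_t-\partial_z)(u_t+u_z)=g$, $g\coloneqq-\lambda u_z$, along characteristics. The quantity $p\coloneqq u_t+u_z$ solves $(\partial_t-\partial_z)p=g$; its characteristic through $(z,t)$ runs from $(z+t,0)$ on the base to $(z,t)$, so $p(z,t)=\int_0^tg(z+t-s,s)\der s$ because the Cauchy data vanish. The quantity $q\coloneqq u_t-u_z$ solves $(\partial_t+\partial_z)q=g$ (\cref{rem:opposite_factorization}), and its characteristic through $(z,t)$ meets $\{z=0\}$ at time $m\coloneqq t-z$ (this is where $z<t$ enters), where $u(0,\impvar)=b$ gives $u_t(0,m)=b'(m)$, hence $q(0,m)=2b'(m)-p(0,m)$ with $p(0,m)=\int_0^mg(m-s,s)\der s$; integrating $q$ from $(0,m)$ to $(z,t)$ yields $q(z,t)=2b'(m)-\int_0^mg(m-s,s)\der s+\int_0^zg(s,m+s)\der s$. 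With $u_z=\tfrac12(p-q)$ this gives
\[
\psi(z,t)=\tfrac12\Bigl(\int_0^tg(z+t-s,s)\der s+\int_0^mg(m-s,s)\der s-\int_0^zg(s,m+s)\der s\Bigr),
\]
and inserting $g=-\lambda u_z=-\lambda\bigl(\psi-b'(m(\impvar))\bigr)$ rewrites this as a fixed-point equation $\psi=\mathcal S\psi+\rho$ on $\Omega_+$, where $\mathcal S$ collects the $-\lambda\psi$ terms and $\rho$ the $\lambda\,b'(m(\impvar))$ terms. All evaluation points $(z+t-s,s)$, $(m-s,s)$, $(s,m+s)$ lie in $\tri_+$ (where $\psi$ is either already known to vanish or still to be found), and since $\tfrac12(t+m+z)=t\le r$ one has $\norm{\mathcal S\phi}_\infty\le q\norm{\phi}_\infty$, so $\mathcal S$ is a contraction and $\psi=\sum_{n\ge0}\mathcal S^n\rho$. (The characteristic integrations are valid for the merely $C^1$ function $u$ since, by \cref{thm:wave_solution_formula}, $p$ and $q$ restricted to the relevant characteristics are absolutely continuous with derivative $g$ a.e.)

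\textbf{The quantitative iteration.} Finally I would push the two template quantities $z\abs{b'(m(z,t))}$ and $\int_0^{m(z,t)}\abs{b'}$ through $\mathcal S$. Using $m(z+t-s,s)=(2s-z-t)_+$, $m(m-s,s)=(2s-m)_+$, $m(s,m+s)=m$ together with the substitutions $\sigma=2s-z-t$, $\sigma=2s-m$, one gets $\abs{\rho(z,t)}\le\tfrac12\norm{\lambda}_\infty\bigl(z\abs{b'(m(z,t))}+\int_0^{m(z,t)}\abs{b'}\bigr)$, and, if $\abs{\phi(z,t)}\le A\,z\abs{b'(m(z,t))}+B\int_0^{m(z,t)}\abs{b'}$ on $\tri_+$, then on $\Omega_+$ (bounding $z^2\le\tfrac r2z$, $m-\sigma\le m$, $z+m=t\le r$)
\[
\abs{\mathcal S\phi(z,t)}\le\tfrac q8A\,z\abs{b'(m(z,t))}+\bigl(\tfrac q2B+\tfrac q4A\bigr)\int_0^{m(z,t)}\abs{b'}.
\]
Hence $\abs{\mathcal S^n\rho(z,t)}\le A_n\,z\abs{b'(m(z,t))}+B_n\int_0^{m(z,t)}\abs{b'}$ with $A_0=B_0=\tfrac12\norm{\lambda}_\infty$, $A_{n+1}=\tfrac q8A_n$, $B_{n+1}=\tfrac q2B_n+\tfrac q4A_n$, and summing the geometric and convolution series gives $\sum_nA_n=\tfrac{4\norm{\lambda}_\infty}{8-q}\le\tfrac{2\norm{\lambda}_\infty}{4-q}=\alpha$ and $\sum_nB_n=\tfrac{(8+q)\norm{\lambda}_\infty}{(2-q)(8-q)}\le\tfrac{4\norm{\lambda}_\infty}{(2-q)(4-q)}=\beta$ (each inequality reducing to $q^2\ge0$), which proves the claim. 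I expect the main effort to be this last bookkeeping — correctly transporting the two templates through the three characteristic integrals defining $\mathcal S$ and checking the collapse of the constants; a minor technical point is justifying the characteristic integrations and the boundary identity $q(0,m)=2b'(m)-p(0,m)$ for a solution that is only $C^1$.
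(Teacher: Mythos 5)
Your proposal is correct, and it reaches the same constants $\alpha,\beta$ by a genuinely different route. The paper revisits the odd‑reflection fixed‑point equation set up in \cref{lem:dirichlet_phi}: writing $u=G^b+v$ and iterating the operator $T$ that acts on $v_z$ (with the extended $\lambda_{\mathrm{odd}},G^b_{\mathrm{even},z}$), one proves by induction that every iterate $T^n(0)$ already satisfies the target bound with the fixed constants $\alpha,\beta$; the choice of $\alpha,\beta$ is engineered so that the induction closes via the identities $\|\lambda\|_\infty+\tfrac q2\alpha=2\alpha$ and $2\alpha+q\beta=2\beta$. You instead avoid the reflection entirely: you integrate the two factored transport equations $(\partial_t-\partial_z)p=g$, $(\partial_t+\partial_z)q=g$ directly on $\tri_+$ (using the boundary trace to close at $z=0$ and finite speed of propagation to dispose of $\{z\ge t\}$), obtain a clean three‑integral representation $\psi=\mathcal S\psi+\rho$ for $\psi=u_z+b'(m)$, and then track the two ``templates'' through the iterates $\mathcal S^n\rho$ by solving the linear recursion for $(A_n,B_n)$ explicitly and summing the resulting geometric/convolution series. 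The two strategies are genuinely different in bookkeeping — your direct characteristic formula sidesteps the sign case-splitting that the $\lambda_{\mathrm{odd}}$ picture forces, and your geometric‑series bound $\bigl(\sum A_n,\sum B_n\bigr)=\bigl(\tfrac{4\|\lambda\|_\infty}{8-q},\tfrac{(8+q)\|\lambda\|_\infty}{(2-q)(8-q)}\bigr)$ is actually a slight strengthening of $(\alpha,\beta)$; I checked your intermediate bound $|\mathcal S\phi|\le\tfrac q8A\,z|b'(m)|+(\tfrac q4A+\tfrac q2B)\int_0^m|b'|$ and the series sums, and they are right (one small imprecision: the inequality $\sum A_n\le\alpha$ reduces to $q\ge0$, not $q^2\ge0$; the $\beta$ inequality does reduce to $q^2\ge0$). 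Your reflection argument for $\Phi_-$ is also sound, since the reflected $\lambda$ picks up a sign but keeps the same sup norm. The only bits you'd need to flesh out for a final write‑up are the ones you already flag: justifying the characteristic integrations across the finitely many discontinuities of $\lambda$ and the extension of $\mathcal S$, $\rho$ by zero on $\{z\ge t\}$ so that the iteration runs on all of $\tri_+$.
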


\begin{proof} 
	We only give the proof in the ``$+$''-case and for $(z_0,t_0)=(0,0)$. We revisit the proof of \cref{lem:dirichlet_phi} where $\Phi_+$ is defined. From \eqref{eq:linear_fixedpoint_derived} we know that $v_z$ satisfies
	\begin{align*}
		v_z(z, t) 
		= &-\tfrac12 \int_{0}^{t} \lambda_\text{odd}(z + s) \cdot \left(G^b_{\text{even}, z}(z + s, t - s) + v_z(z + s, t - s)\right) \der s \\
		& +  \tfrac12 \int_{0}^{t} \lambda_\text{odd}(z - s) \cdot \left(G^b_{\text{even}, z}(z - s, t - s) + v_z (z - s, t - s)\right)  \der s.
	\end{align*}
	We denote the term on the right-hand side by $T(v_z)(z, t)$ and already know that $T$ is Lipschitz continuous with constant $q < 1$. Therefore we may write the solution as $v_z \coloneqq \lim\limits_{n \to \infty} T^n(0)$ and thus have to study $v_z^{(n)} \coloneqq T^n(0)$. The claimed inequality for $u_z$ will follow once we have shown that
	\begin{align*}
		|v_z(z, t)| \leq \alpha \abs{z - z_0} \abs{b'(m)} + \beta \int_{t_0}^{m} \abs{b'(\tau)} \der \tau.
	\end{align*}
	Due to $v_z \coloneqq \lim\limits_{n \to \infty} T^n(0)$ it is sufficient to show that this estimate holds for all $v_z^{(n)}$. Since $v_z^{(0)} = 0$, there is nothing left to show for $n = 0$. Now assume that the estimate has been shown for some fixed $n$. 
	Recalling the definition of $G^b$ from \eqref{eq:def:homogenous_boundary_solution}, we have
	\begin{align*}
		G_{\text{even},z}^b(z, t) = - \sign(z) b'(\max\{t - \abs{z}, 0\}).
	\end{align*}
	Notice that  $G^b_{\text{even}, z}(z,t)$ vanishes for $|z|\geq t$. Therefore, if $v_z^{(n)}$ vanishes for $|z|\geq t$ then also $v_z^{(n+1)} = T(v_z^{(n)})$ vanishes on this set.
	So in the following we may assume $\abs{z} < t$. We will only consider $z \geq 0$ as $z < 0$ can be treated similarly.
	For $z\geq 0$ and $t>z$ the expression $m = \max\{t - \abs{z}, 0\}$ simplifies to $ m = t - z$.
	We begin by estimating the terms which are independent of $v_z^{(n)}$:
	\begin{align*}
		&\abs{\int_{0}^{t} \lambda_\text{odd}(z + s) G^b_{\text{even},z}(z + s, t - s) \der s} \\
		&\quad= \abs{-\int_{0}^{t} \lambda_\text{odd}(z + s) b'(\max\{t - z - 2 s, 0\}) \der s} \\
		&\quad\leq \tfrac12 \norm{\lambda}_\infty \int_{0}^{t - z} \abs{b'(\tau)} \der \tau= \tfrac12 \norm{\lambda}_\infty \int_{0}^{m} \abs{b'(\tau)} \der \tau, \\
        &\abs{\int_{0}^{t} \lambda_\text{odd}(z - s) G^b_{\text{even},z}(z - s, t - s) \der s} \\
		&\quad= \abs{- \int_{0}^{z} \lambda_\text{odd}(z - s) b'(t - z) \der s
			+ \int_{z}^{t} \lambda_\text{odd}(z - s) b'(\max\{t + z - 2 s, 0\}) \der s} \\
		&\quad\leq \norm{\lambda}_\infty z \abs{b'(t - z)} + \tfrac12 \norm{\lambda}_\infty \int_0^{t - z} \abs{b'(\tau)} \der \tau 
			= \norm{\lambda}_\infty z \abs{b'(m)} + \tfrac12 \norm{\lambda}_\infty \int_0^{m} \abs{b'(\tau)} \der \tau.
	\intertext{The remaining two summands are treated by}
		&\abs{\int_{0}^{t} \lambda_\text{odd}(z + s) v^{(n)}_z(z + s, t - s) \der s} \\
		&\quad\leq \norm{\lambda}_\infty \int_0^t \left(\alpha (z + s) \abs{b'(\max\{t - z - 2 s, 0\})} + \beta \int_{0}^{\max\{t - z - 2 s, 0\}} \abs{b'(\tau)} \der \tau\right) \der s \\
		&\quad= \norm{\lambda}_\infty \int_0^{\frac{t - z}{2}} \left(\alpha (z + s) \abs{b'(t - z - 2 s)} + \beta \int_{0}^{t - z - 2 s} \abs{b'(\tau)} \der \tau\right) \der s \\
		&\quad\leq \norm{\lambda}_\infty \int_0^{\frac{t - z}{2}} \left(\alpha\frac{t + z}{2} \abs{b'(t - z - 2 s)} + \beta \int_{0}^{t - z} \abs{b'(\tau)} \der \tau\right) \der s \\
		&\quad= \norm{\lambda}_\infty \left(\alpha \frac{t + z}{4} + \beta \frac{t - z}{2}\right)\int_{0}^{m} \abs{b'(\tau)} \der \tau, \\
		&\abs{\int_{0}^{t} \lambda_\text{odd}(z - s) v^{(n)}_z(z - s, t - s) \der s} \\
		&\quad\leq \norm{\lambda}_\infty \int_0^t \left(\alpha \abs{z - s} \abs{b'(\max\{t - s - \abs{z - s}, 0\})} + \beta \int_{0}^{\max\{t - s - \abs{z - s}, 0\}} \abs{b'(\tau)} \der \tau\right) \der s \\
		&\quad= \norm{\lambda}_\infty \int_0^z \left(\alpha (z - s) \abs{b'(t - z)} + \beta \int_{0}^{t - z} \abs{b'(\tau)} \der \tau\right) \der s \\
		&\qquad+ \norm{\lambda}_\infty \int_z^{\frac{z + t}{2}} \left(\alpha (s - z) \abs{b'(t + z- 2 s)} + \beta \int_{0}^{t + z - 2 s} \abs{b'(\tau)} \der \tau\right) \der s \\
		&\quad\leq \norm{\lambda}_\infty \left(\alpha \frac{z^2}{2} \abs{b'(m)} + \beta z \int_{0}^{m} \abs{b'(\tau)} \der \tau\right) \\
		&\qquad+ \norm{\lambda}_\infty \left(\alpha \frac{t - z}{4} + \beta \frac{t - z}{2}\right) \int_{0}^{m} \abs{b'(\tau)} \der \tau.
	\end{align*}
	Summing up all four estimates, we obtain
	\begin{align*}
		&2 \abs{v^{(n+1)}_z(z, t)} \\
		&\quad\leq \tfrac12 \norm{\lambda}_\infty \int_0^{m} \abs{b'(\tau)} \der \tau \\
		&\qquad+ \norm{\lambda}_\infty z \abs{b'(m)} + \tfrac12 \norm{\lambda}_\infty \int_0^{m} \abs{b'(\tau)} \der \tau \\
		&\qquad+ \norm{\lambda}_\infty \left(\alpha \frac{t + z}{4} + \beta \frac{t - z}{2}\right)\int_{0}^{m} \abs{b'(\tau)} \der \tau \\
		&\qquad+ \norm{\lambda}_\infty \left(\alpha \frac{z^2}{2} \abs{b'(m)} + \beta z \int_{0}^{m} \abs{b'(\tau)} \der \tau\right) \\
		&\qquad+ \norm{\lambda}_\infty \left(\alpha \frac{t - z}{4} + \beta \frac{t - z}{2}\right) \int_{0}^{m} \abs{b'(\tau)} \der \tau \\
		&\quad= \norm{\lambda}_\infty \left(1 + \alpha \frac{z}{2}\right) z \abs{b'(m)} \\
		&\qquad+ \norm{\lambda}_\infty \left(\tfrac12 + \tfrac12 + \alpha \frac{t + z}{4} + \beta \frac{t - z}{2} + \beta z + \alpha \frac{t - z}{4} + \beta \frac{t - z}{2}\right) \int_{0}^{m} \abs{b'(\tau)}\\
		&\quad\eqqcolon 2 C_1 z \abs{b'(m)}+ 2 C_2 \int_{0}^{m} \abs{b'(\tau)} \der \tau.
	\end{align*}
	It remains to verify $C_1 \leq \alpha$ and $C_2 \leq \beta$. In fact, using $t, z \leq r$, we obtain
	\begin{align*}
		&2 C_1 \leq \norm{\lambda}_\infty + \frac{q}{2} \alpha = 2 \alpha, \\
		&2 C_2 \leq \norm{\lambda}_\infty + \frac{q}{2} \alpha + q \beta = 2\alpha + q\beta = 2 \beta,
	\end{align*}
	where the equalities hold by definition of $\alpha$ and $\beta$, respectively.
\end{proof}

\section{Proof of \cref{thm:main}} \label{sec:main_proof}

In this section, we will prove the existence and uniqueness part of the main \cref{thm:main} under the additional assumption that $f$ grows at least linearly, i.e., for some $A, B > 0$ we have
\begin{align}\label{ass:linear_growth} \tag{A4}
	\abs{f(x)} \geq A \abs{x} - B \quad \text{for } x \in \R.
\end{align} 
In \cref{sec:energy} we will show how to remove this assumption. The wellposedness part of \cref{thm:main} will be completed in Section~\ref{sec:wellposedness}. 

We will again use that the wave equation has finite speed of propagation so that we may argue locally. To be more specific, we will work on the following types of triangular domains:
\begin{itemize}
	\item A \emph{jump triangle} is a triangle $\tri = \tri(z_0, 0, r)$ with base $B = P_z \tri \subseteq (0, \infty)$, where $z_0 \in D(c)$ and $B$ intersects $D(c)$ in no other point. These are useful for the study of the jump condition $\frac{u_z(z+, t)}{c(z+)} = \frac{u_z(z-, t)}{c(z-)}$.
	
	\item A \emph{boundary triangle} is a half-triangle $\righttri = \tri_+(0, 0, r)$ with base $B_+ = P_z \righttri = [0, r]$ where $B_+$ does not intersect $D(c)$. These are used to study the nonlinear Neumann condition $\frac{u_z}{c(0)} = (f(u_t))_t$.
	
	\item A \emph{plain triangle} is a triangle $\tri = \tri(z_0, 0, r)$ with base $B = P_z \tri \subseteq (0, \infty)$ not intersecting $D(c)$. These are used to cover the remaining space.
\end{itemize}

\begin{lemma} \label{lem:wellposed:plain}
	Let $\tri$ be a plain triangle with base $B$. Assume $r \norm{\frac{c_z}{c}}_\infty < 1$. Then \eqref{eq:IP_z} has a unique $C^1$-solution $u$ on $\tri$ and there exists a constant $C = C(r, \norm{\frac{c_z}{c}}_\infty)$ such that the solution operator $\Phi \colon C^1(B) \times C(B) \to C^1(\tri), (u_0, u_1) \mapsto u$ satisfies $\norm{\Phi} \leq C$.
\end{lemma}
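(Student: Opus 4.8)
The plan is to recognize a plain triangle as precisely the region where no special structure is present, so that \eqref{eq:IP_z} reduces on $\tri$ to the purely linear local problem \eqref{local_problem} already solved in \cref{cor:linear_uniqueness} with the choice $\lambda \coloneqq \frac{c_z}{c}$. First I would check that this $\lambda$ is admissible for \cref{cor:linear_uniqueness}. Since $c = 1/\sqrt{V}$ with $V \in PC^1([0,\infty))$, $\inf V > 0$ and $V, V' \in L^\infty$ by \eqref{ass:potential:1}, and since $s \mapsto 1/\sqrt{s}$ is smooth on $(0,\infty)$ while the change of variables $z = \kappa(x)$ is bi-Lipschitz (its $x$-derivative $\sqrt{V}$ is bounded above and below), one gets $c \in PC^1([0,\infty))$ in $z$-coordinates, hence $c_z \in PC([0,\infty)) \cap L^\infty$ and therefore $\lambda = \frac{c_z}{c} \in PC(B) \cap L^\infty$. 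Because $B$ meets neither $D(c)$ (plain triangle) nor $\{0\}$, the function $c$ is continuous and satisfies $0 < \inf c \le \sup c < \infty$ on all of $\tri$, so in particular $D(\lambda) \cap B = D(c_z) \cap B$. Finally, the hypothesis $r\norm{\frac{c_z}{c}}_\infty < 1$ is exactly the smallness condition $r\norm{\lambda}_\infty < 1$ needed to invoke \cref{cor:linear_uniqueness,cor:operator_estimates}.

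The second step is to match the two notions of solution. On a plain triangle the continuity and boundedness of $c$ on $\tri$ give $\spacext{\tri} = C^1(\tri)$ and make the requirement that $\tfrac1c u_z$ be continuous automatic; the jump condition across lines of $D(c)$ and condition (ii) of \cref{def:solution:z} at $z = 0$ are both vacuous since $\tri$ contains no such line; and the exceptional set $(D(c)\cup D(c_z))\times\R$ intersected with $\tri$ equals $D(\lambda)\times\R$. Hence $u$ is a $C^1$-solution of \eqref{eq:IP_z} on $\tri$ if and only if $u \in C^1(\tri)$ is a solution of \eqref{local_problem} in the sense of \cref{thm:wave_solution_formula} with $g = -\lambda u_z$ and $L = D(\lambda)\times\R$; the one minor discrepancy, that \cref{def:solution:z} only prescribes $u_t(\cdot,0)=u_1$ on the relative interior of $B$, is resolved by the continuity of $u_t$ up to $\partial B$. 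Thus \cref{cor:linear_uniqueness} yields a unique such $u$, which is precisely $u = \Phi(u_0, u_1)$.

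The operator bound then follows directly from \cref{cor:operator_estimates}: with $q \coloneqq r\norm{\lambda}_\infty = r\norm{\frac{c_z}{c}}_\infty < 1$, the three displayed inequalities there control $\norm{u}_\infty$, $\norm{u_z}_\infty$ and $\norm{u_t}_\infty$ by a constant depending only on $r$ and $q$ times $\norm{u_0}_\infty + \norm{u_0'}_\infty + \norm{u_1}_\infty$, which is exactly $\norm{\Phi} \le C(r, \norm{\frac{c_z}{c}}_\infty)$.

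I do not expect a genuine obstacle: the purpose of isolating ``plain triangles'' is that everything is inherited from \cref{sec:linear_results}. The only thing requiring a moment of care is the bookkeeping in the second step — confirming $\spacext{\tri} = C^1(\tri)$ on a plain triangle and that the discontinuity lines of $\lambda$ coincide with the exceptional set in \cref{def:solution:z} — after which the lemma is a direct citation of \cref{cor:linear_uniqueness,cor:operator_estimates}.
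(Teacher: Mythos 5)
Your proof is correct and takes essentially the same route as the paper's, whose entire proof is the one-line observation that the lemma follows immediately from \cref{cor:linear_uniqueness} and \cref{cor:operator_estimates} with $\lambda = \frac{c_z}{c}$. The extra work you do (verifying $\lambda \in PC(B)\cap L^\infty$, that $\spacext{\tri} = C^1(\tri)$ on a plain triangle, and that the exceptional sets match) is exactly the bookkeeping the paper leaves implicit in the word ``immediately.''
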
 

\begin{proof}
	This follows immediately from \cref{cor:linear_uniqueness} and \cref{cor:operator_estimates}.
\end{proof}

\begin{lemma} \label{lem:wellposed:jump}
	Let $\tri$ be a jump triangle with base $B$. Assume $r \norm{\frac{c_z}{c}}_\infty < 1$. Then \eqref{eq:IP_z} has a unique $C^1$-solution $u$ on $\tri$ and there exists a constant $C = C(r, \norm{\frac{c_z}{c}}_\infty)$ such that the solution operator $\Phi \colon \spacex{B} \times C(B) \to \spacext{\tri}, (u_0, u_1) \mapsto u$ satisfies $\norm{\Phi} \leq C$.
\end{lemma}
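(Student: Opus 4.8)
The plan is to split the jump triangle $\tri$ at its line of discontinuity $\{z=z_0\}$ into the half‑triangles $\tri_\pm\coloneqq\tri_\pm(z_0,0,r)$ with bases $B_\pm\coloneqq P_z\tri_\pm$, to solve the one‑sided Dirichlet problems by the operators $\Phi_\pm$ of \cref{lem:dirichlet_phi}, and to glue the two solutions along $\{z=z_0\}$, choosing the common Dirichlet datum so that the transmission condition $\tfrac1{c(z_0-)}u_z(z_0-,t)=\tfrac1{c(z_0+)}u_z(z_0+,t)$ holds; by the characterisation of $\spacext{\cdot}$ this is exactly what makes the glued function a $\spacext{\tri}$‑function. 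For $\psi\in X_0\coloneqq\set{\psi\in C([0,r])\colon\psi(0)=0}$ put $\tilde b_\psi(t)\coloneqq\int_0^t\psi$ and $b_\psi\coloneqq b_0+\tilde b_\psi$ with the fixed function $b_0(t)\coloneqq u_0(z_0)+t\,u_1(z_0)$, so that $b_\psi(0)=u_0(z_0)$, $b_\psi'(0)=u_1(z_0)$ and \cref{lem:dirichlet_phi} applies. By linearity of $\Phi_\pm$ we may write $\Phi_\pm(b_\psi,u_0|_{B_\pm},u_1|_{B_\pm})=p^\pm+q^\pm_\psi$, where $p^\pm\coloneqq\Phi_\pm(b_0,u_0|_{B_\pm},u_1|_{B_\pm})$ is fixed and $q^\pm_\psi\coloneqq\Phi_\pm(\tilde b_\psi,0,0)$ is linear in $\psi$.

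To determine $\psi$ I use \cref{lem:fine_boundary_estimate}, applied on $\tri_\pm$ with boundary datum $\tilde b_\psi$ and evaluated at $z=z_0$ (where $m=t$): it gives $\partial_z q^\pm_\psi(z_0,t)=\mp\psi(t)+R^\pm[\psi](t)$, where $R^\pm$ is linear in $\psi$ with $\abs{R^\pm[\psi](t)}\le\beta\int_0^t\abs{\psi(\tau)}\der\tau$. Hence, with $G(t)\coloneqq\tfrac1{c(z_0-)}\partial_z p^-(z_0,t)-\tfrac1{c(z_0+)}\partial_z p^+(z_0,t)$, the transmission condition for the glued candidate $u$ (equal to $p^-+q^-_\psi$ on $\tri_-$ and to $p^++q^+_\psi$ on $\tri_+$) is equivalent to the fixed‑point equation
\[
\psi=\mathcal{S}[\psi],\qquad
\mathcal{S}[\psi]\coloneqq\frac{-1}{\tfrac1{c(z_0-)}+\tfrac1{c(z_0+)}}\Bigl(G+\tfrac1{c(z_0-)}R^-[\psi]-\tfrac1{c(z_0+)}R^+[\psi]\Bigr).
\]
Because $u_0\in\spacex{B}$ forces $\tfrac1c u_0'$ to be continuous across $z_0$, one has $G(0)=\tfrac{u_0'(z_0-)}{c(z_0-)}-\tfrac{u_0'(z_0+)}{c(z_0+)}=0$, so $\mathcal{S}$ maps the closed subspace $X_0$ into itself; its linear part obeys $\abs{(\mathcal{S}[\psi_1]-\mathcal{S}[\psi_2])(t)}\le\beta\int_0^t\abs{\psi_1-\psi_2}$, which makes $\mathcal{S}$ a contraction on $(X_0,\norm{\cdot}_w)$ with the weighted norm $\norm{\psi}_w\coloneqq\sup_{t\in[0,r]}\ee^{-Lt}\abs{\psi(t)}$ for any $L>\beta$ (the weight is needed since $\beta r$ need not be $<1$). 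Let $\psi^\ast$ be the unique fixed point and $u$ the corresponding glued function.

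It remains to check that $u$ is a $C^1$‑solution and the only one. By construction $u$ and $u_t$ are continuous on each closed half‑triangle and agree along $\{z=z_0\}$ (taking the values $b_{\psi^\ast}$ and $b_{\psi^\ast}'$), while $\tfrac1c u_z$ is continuous on each half and matches across $\{z=z_0\}$ by the transmission condition; hence $u\in\spacext{\tri}$. Condition (i) of \cref{def:solution:z} holds on each $\tri_\pm$ away from $D(c_z)\times\R$ since there $p^\pm+q^\pm_{\psi^\ast}=\Phi_\pm(b_{\psi^\ast},\dots)$ solves the equation by \cref{lem:dirichlet_phi}; condition (ii) is empty because $z_0\neq0$; condition (iii) holds since $p^\pm$ attains $u_0|_{B_\pm},u_1|_{B_\pm}$ and $q^\pm_{\psi^\ast}$ attains $0,0$, while $u_t(z_0,0)=b_{\psi^\ast}'(0)=u_1(z_0)$. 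For uniqueness, if $u$ is any $C^1$‑solution on $\tri$, set $b\coloneqq u(z_0,\cdot)$ (which lies in $C^1([0,r])$ and satisfies the compatibility conditions) and $\psi\coloneqq u_t(z_0,\cdot)-u_1(z_0)\in X_0$; restricting to $\tri_\pm$ and invoking the uniqueness part of \cref{lem:dirichlet_phi} gives $u|_{\tri_\pm}=\Phi_\pm(b,u_0|_{B_\pm},u_1|_{B_\pm})=p^\pm+q^\pm_\psi$, and $u\in\spacext{\tri}$ then forces $\psi=\mathcal{S}[\psi]$, so $\psi=\psi^\ast$ and $u$ is the constructed solution.

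Finally, $\Phi\colon(u_0,u_1)\mapsto u$ is linear, since $b_0$, $G$, $\mathcal{S}[0]$, the resolvent $(\id-(\mathcal{S}-\mathcal{S}[0]))^{-1}$, and $\Phi_\pm$ all depend linearly on $(u_0,u_1)$. For the bound, chain the estimates: \cref{rem:estimate:boundary_solution_op} gives $\norm{p^\pm}_{C^1(\tri_\pm)}\le C(\norm{b_0}_{C^1}+\norm{u_0}_{\spacex{B_\pm}}+\norm{u_1}_\infty)\le C(\norm{u_0}_{\spacex{B}}+\norm{u_1}_\infty)$, hence $\norm{G}_\infty\le C(\norm{u_0}_{\spacex{B}}+\norm{u_1}_\infty)$; the contraction estimate yields $\norm{\psi^\ast}_w\le C\norm{G}_\infty$ and thus $\norm{\psi^\ast}_\infty\le\ee^{Lr}\norm{\psi^\ast}_w\le C\norm{G}_\infty$, with $Lr$ depending only on $r$ and $\norm{c_z/c}_\infty$; then $\norm{\tilde b_{\psi^\ast}}_{C^1}\le(1+r)\norm{\psi^\ast}_\infty$ and \cref{rem:estimate:boundary_solution_op} once more bounds $\norm{q^\pm_{\psi^\ast}}_{C^1(\tri_\pm)}$. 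Adding up on the two half‑triangles, and using $c,1/c\in L^\infty$ (from \eqref{ass:potential:1}) to pass between the $C^1$‑ and $\spacex{\cdot}$‑scales, gives $\norm{u}_{\spacext{\tri}}\le C(\norm{u_0}_{\spacex{B}}+\norm{u_1}_\infty)$ with $C=C(r,\norm{c_z/c}_\infty)$. The main obstacle is the transmission step: it is the refined estimate of \cref{lem:fine_boundary_estimate} that isolates the dominant $\mp\psi$ term and makes $\mathcal{S}$ contractive up to the fixed data term, and the exponentially weighted norm is what allows this for every admissible $r$ rather than only for small ones.
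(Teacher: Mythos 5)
Your proposal is correct and follows essentially the same route as the paper's own proof: reduce to a scalar fixed-point problem along the interface $\{z=z_0\}$ via the operators $\Phi_\pm$ of \cref{lem:dirichlet_phi}, use \cref{lem:fine_boundary_estimate} to isolate the dominant $\mp b'$ contribution of the boundary datum to $\partial_z\Phi_\pm$ at the interface (leaving a remainder controlled by $\beta\int_0^t|b'|$), and close the contraction in an exponentially weighted sup-norm with decay rate exceeding $\beta$ so that the argument works for every admissible $r$. The only difference is the change of unknown: you write $b_\psi = b_0+\tilde b_\psi$ with $\psi=b'-u_1(z_0)$ and contract in the subspace $\{\psi(0)=0\}$, whereas the paper contracts directly in the affine space $\{b\in C^1([0,r])\colon b(0)=u_0(z_0)\}$ with the same weighted norm $\sup e^{-\mu t}|b'(t)|$. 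Your reparametrization has a small cosmetic advantage: it makes the compatibility condition $b'(0)=u_1(z_0)$ (needed so that $\Phi_\pm(b,u_0,u_1)$ is defined by \cref{lem:dirichlet_phi}) explicit and automatically preserved under the iteration, which the paper's stated space leaves implicit. Your final operator-norm estimate, chaining \cref{rem:estimate:boundary_solution_op}, the contraction bound for $\psi^\ast$, and the equivalence of the $C^1$- and $\spacex{\cdot}$-scales via $c,1/c\in L^\infty$, matches the paper's assertion of the bound on $\Phi$.
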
 

\begin{proof}
	Let $\tri = \tri(z_0, 0, r)$. 
	If $u \colon \Delta \to \R$ is a solution of \eqref{eq:IP_z}, then by defining $b \colon [0, r] \to \R, b(t) = u(z_0, t)$ and using \cref{lem:dirichlet_phi} we have 
	\begin{align}\label{eq:loc:jump1}
		u(z, t) = \begin{cases}
			\Phi_+(b, u_0, u_1)(z, t), &z \geq z_0, \\
			\Phi_-(b, u_0, u_1)(z, t), &z \leq z_0.
		\end{cases}
	\end{align}
	On the other hand, if $b \in C^1([0, r])$ with $b(0) = u_0(z_0)$ and $b'(0) = u_1(z_0)$ is given, then the function $u$ defined by \eqref{eq:loc:jump1} satisfies $u, u_t \in C(\Delta)$ as $\Phi_\pm(b, u_0, u_1)$ and $\Phi_\pm(b, u_0, u_1)_t$ coincide with $b$ resp. $b'$ at the boundary $z=z_0$. Hence, $u$ solves \eqref{eq:IP_z} if and only if $u_x$ is continuous, i.e. 
	\begin{align}\label{eq:loc:jump2}
		\frac{u_z(z_0+, t)}{c(z_0+)} = \frac{u_z(z_0-, t)}{c(z_0-)}
	\end{align}
	holds for all $t \in [0, r]$. Using \eqref{eq:loc:jump1}, we can write \eqref{eq:loc:jump2} as
	\begin{align*}
		\frac{1}{c(z_0-)} \Phi_-(b, u_0, u_1)_z(z_0, t) 
		= \frac{1}{c(z_0+)} \Phi_+(b, u_0, u_1)_z(z_0, t)
	\end{align*}
	or as
	\begin{align*}
		b'(t) 
		= \gamma \left(\frac{1}{c(z_0-)} \left(b'(t) - \Phi_-(b, u_0, u_1)_z(z_0, t)\right) 
		+ \frac{1}{c(z_0+)} \left({b'(t) + \Phi_+(b, u_0, u_1)_z(z_0, t)}\right)\right)
	\end{align*}
	with
	\begin{align*}
		\gamma \coloneqq \left(\frac{1}{c(z_0-)} + \frac{1}{c(z_0+)}\right)^{-1}
	\end{align*}
	We denote the right-hand side by $T(b)(t)$ and show now that $\Psi \colon b \mapsto u_0(z_0) + \int_{0}^{(\impvar)} T(b)(\tau) \der \tau$ is a strict contraction in the space $X \coloneqq \{b \in C^1([0, r]) \mid b(0) = u_0(z_0)\}$ with norm $\norm{b}_X = \sup \set{\ee^{- \mu t}\abs{b'(t)} \colon t \in [0, r]}$, where $\mu > 0$ will be chosen later. So let $b, \tilde b \in X$ and write $\hat b \coloneqq b - \tilde b$. Next we estimate
	\begin{align*}
		&\abs{\Psi(b)(t) - \Psi(\tilde b)(t)} \\
		&\quad= \gamma \abs{\frac{1}{c(z_0-)} \left(\hat b'(t) - \Phi_-(\hat b, 0, 0)_z(z_0, t)\right) 
			+ \frac{1}{c(z_0+)} \left(\hat b'(t) + \Phi_+(\hat b, 0, 0)_z(z_0, t)\right)} \\
		&\quad\leq \gamma \left(\frac{1}{c(z_0-)} \beta \int_{0}^{t} \abs{\hat b'(\tau)} \der \tau + \frac{1}{c(z_0+)} \beta \int_{0}^{t} \abs{\hat b'(\tau)} \der \tau\right) \\ 
		&\quad= \beta \int_{0}^{t} \abs{\hat b'(\tau)} \der \tau
		\leq \beta \norm{\hat b}_X \int_{0}^{t} \ee^{\mu\tau} \der \tau
		\leq \frac{\beta}{\mu} \ee^{\mu t} \norm{\hat b}_X,
	\end{align*}
	where $\beta$ is the constant from \cref{lem:fine_boundary_estimate}. If we choose $\mu > \beta$, then $\Psi$ is a strict contraction so that $b = \Psi(b)$ has a unique solution by Banach's fixed-point theorem. Using \cref{rem:estimate:boundary_solution_op}, the fixed-point theorem also shows that $b$ linearly and continuously depends on $u_0$ and $u_1$. Moreover, boundedness of the linear solution operator $\Phi$ then follows from \eqref{eq:loc:jump1}.
\end{proof}

\begin{lemma} \label{lem:wellposed:boundary}
	Let $\righttri$ be a boundary triangle with base $B_+$. Assume $r \norm{\frac{c_z}{c}}_\infty < 1$. Then \eqref{eq:IP_z} has a unique $C^1$-solution on $\righttri$.
\end{lemma}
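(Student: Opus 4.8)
The plan is to localize to the spatial boundary $z=0$ and recast \eqref{eq:IP_z} on $\righttri=\tri_+(0,0,r)$ as a fixed-point equation for the trace $b(t):=u(0,t)$. Since the base $B_+$ avoids $D(c)$, the coefficient $\lambda:=c_z/c$ lies in $PC(B_+)$ and $c$ is continuous there, so \cref{lem:dirichlet_phi} applies: a $C^1$-solution $u$ must satisfy $u=\Phi_+(b,u_0,u_1)$ with $b\in C^1([0,r])$, $b(0)=u_0(0)$, $b'(0)=u_1(0)$; conversely, for any such $b$ the function $\Phi_+(b,u_0,u_1)$ solves the interior equation and the initial conditions, and is a solution of \eqref{eq:IP_z} iff the Neumann condition holds. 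Using $u_t(0,t)=b'(t)$ and integrating the Neumann condition in time, this last requirement reads $f(b'(t))=f(u_1(0))+\frac1{c(0)}\int_0^t\Phi_+(b,u_0,u_1)_z(0,\tau)\der\tau$. As $f$ is an increasing homeomorphism it is natural to take $G:=f\circ b'\in C([0,r])$ as the unknown, so $b'=f^{-1}\circ G$, $b_G(t):=u_0(0)+\int_0^tf^{-1}(G(\sigma))\der\sigma$, and the problem becomes the fixed-point equation $G=\Lambda(G)$ on $\mathcal G:=\{G\in C([0,r]):G(0)=f(u_1(0))\}$ with $\Lambda(G)(t):=f(u_1(0))+\frac1{c(0)}\int_0^t\Phi_+(b_G,u_0,u_1)_z(0,\tau)\der\tau$.

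To study $\Lambda$ I would split off the data as in the proof of \cref{lem:wellposed:jump}: with $b_{\mathrm{lin}}(t):=u_0(0)+tu_1(0)$ and $w:=\Phi_+(b_{\mathrm{lin}},u_0,u_1)$ fixed, linearity of the solution operator and \cref{lem:fine_boundary_estimate} give $\Phi_+(b_G,u_0,u_1)_z(0,\tau)=w_z(0,\tau)+u_1(0)-f^{-1}(G(\tau))+R_G(\tau)$ with $|R_G(\tau)|\le\beta\int_0^\tau|f^{-1}(G)-u_1(0)|$, so $\Lambda(G)$ is $C^1$ with $\Lambda(G)'(t)=\frac1{c(0)}\bigl(w_z(0,t)+u_1(0)-f^{-1}(G(t))+R_G(t)\bigr)$. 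For existence I would use \eqref{ass:linear_growth} in the form $|f^{-1}(y)|\le(|y|+B)/A$: together with the representation above it yields a linear Volterra estimate for $|\Lambda(G)|$, hence by Grönwall an a priori bound on all solutions of the homotopy $G=s\Lambda(G)$, $s\in[0,1]$, depending only on $r$, $\|\lambda\|_\infty$, $A$, $B$ and the data. Since $\Lambda$ is continuous and maps bounded subsets of $\mathcal G$ into equi-Lipschitz (hence, by Arzelà–Ascoli, relatively compact) sets, a Schaefer/Leray–Schauder fixed-point theorem — or Schauder on a large ball, on which $\Lambda$ is a self-map once $r$ is small, the general case following by splitting $[0,r]$ and gluing successive solutions via finite speed of propagation — produces a fixed point $G$, i.e. a $C^1$-solution $u=\Phi_+(b_G,u_0,u_1)$ on $\righttri$.

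For uniqueness I would exploit the dissipative structure of the $G$-equation. If $G_1,G_2$ are two fixed points of $\Lambda$ (so $G_i(0)=f(u_1(0))$), then $D:=G_1-G_2$ satisfies $D(0)=0$ and, testing the difference of the equations for $G_i'$ against $D$,
\[
\tfrac12\tfrac{\der}{\der t}D(t)^2=-\tfrac1{c(0)}\bigl(f^{-1}(G_1(t))-f^{-1}(G_2(t))\bigr)D(t)+\tfrac1{c(0)}\bigl(R_{G_1}(t)-R_{G_2}(t)\bigr)D(t).
\]
The first term is $\le0$ because $f^{-1}$ is increasing — this is exactly where \eqref{ass:nonlinearity} enters, and where the argument breaks for a decreasing $f$, cf. \cref{rem:decreasing}. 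The second term is controlled by $|R_{G_1}(t)-R_{G_2}(t)|\le\beta\int_0^t|f^{-1}(G_1)-f^{-1}(G_2)|$ from \cref{lem:fine_boundary_estimate}; absorbing it into the negative dissipative term via a Grönwall argument that tracks both $D$ and $t\mapsto\int_0^t|f^{-1}(G_1)-f^{-1}(G_2)|$, and using $D(0)=0$, forces $D\equiv0$, hence $G_1=G_2$ and the solution is unique.

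The main obstacle is this uniqueness step: since $f^{-1}$ is only continuous — not Lipschitz — the perturbation term cannot be dominated directly by $|D|$, so one genuinely has to use the good sign of the dissipative term (which alone settles the constant-coefficient case) to swallow the $\beta$-perturbation caused by the variable wave speed. For the existence part the corresponding difficulty is that this same lack of Lipschitz continuity of $f^{-1}$ rules out the contraction argument used in the plain- and jump-triangle lemmas, which is why one must pass through a compactness-type fixed-point theorem and rely on the linear lower bound \eqref{ass:linear_growth}.
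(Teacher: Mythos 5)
Your proposal follows essentially the same route as the paper: reduce \eqref{eq:IP_z} on $\righttri$ to a fixed-point equation for the unknown $G = f \circ b'$ (the paper calls it $d$) via the boundary trace $b(t) = u(0,t)$ and the Dirichlet solution operator $\Phi_+$, obtain existence by a compactness fixed-point theorem combined with the growth condition \eqref{ass:linear_growth}, and obtain uniqueness from the monotonicity of $f^{-1}$ together with the fine estimate of \cref{lem:fine_boundary_estimate}. Your existence step is near-identical to the paper's (Schauder on a compact convex subset of $C([0,r])$; the paper's choice of the weight $\mu$ already makes the map a self-map for any admissible $r$, so the iterative gluing over a small-$r$ subdivision that you mention as a fallback is not needed).

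The one place to tighten is the uniqueness step. After testing against $D = G_1 - G_2$ you arrive at
\begin{align*}
\tfrac12\tfrac{\der}{\der t}D(t)^2 \;\le\; \tfrac{1}{c(0)}\,\abs{D(t)}\Bigl(-\delta(t) + \beta\int_0^t \delta(\tau)\der\tau\Bigr),
\qquad \delta \coloneqq \abs{f^{-1}(G_1) - f^{-1}(G_2)},
\end{align*}
but this does not Grönwall directly: since $f^{-1}$ is only $r$-Hölder, $\delta$ is not comparable to $\abs{D}$, and the prefactor $\abs{D}$ blocks the usual integral estimate. The fix — which your remark about tracking both $D$ and $\int_0^t\delta$ already hints at — is to work with $\abs{D}$ itself rather than $D^2$. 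Since $D$ is $C^1$, the function $\abs{D}$ is Lipschitz with $\tfrac{\der}{\der t}\abs{D} = \sign(D)\,D'$ a.e., so integrating and applying Fubini gives
\begin{align*}
\abs{D(t)} \;\le\; \frac{1}{c(0)}\int_0^t\bigl(\beta(t-\tau) - 1\bigr)\,\delta(\tau)\der\tau,
\end{align*}
whose right-hand side is $\le 0$ for $t < 1/\beta$ (and strictly negative unless $\delta \equiv 0$), forcing $\delta \equiv 0$ and hence $D \equiv 0$ — then iterate to cover $[0,r]$. Equivalently, this $\abs{D}$-inequality yields $E(t) \coloneqq \int_0^t\delta \le \beta\int_0^t E$ and $E\equiv 0$ by Grönwall. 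The paper reaches exactly this $\abs{D}$-inequality by a barrier/minimal-crossing argument with the comparison function $h_\varepsilon$, thereby avoiding the a.e.\ differentiation of $\abs{D}$; the two arguments are equivalent in substance, both resting on the monotonicity of $f^{-1}$ and the $\beta\int\abs{b'}$ estimate of \cref{lem:fine_boundary_estimate}.
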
 

\begin{proof}
	As in the previous lemma, we write $b(t) = u(0, t)$, Then $u$ is a solution on $\righttri$ if and only if $u = \Phi_+(b, u_0, u_1)$ and
	\begin{align*}
		\dv{f(u_t(0, t))}{t} = 
		\frac{u_z(0, t)}{c(0)}.
	\end{align*}
	We may rewrite the latter equation as
	\begin{align*}
		\dv{f(b'(t))}{t} =
		\frac{1}{c(0)}\Phi_+(b, u_0, u_1)_z(0, t).
	\end{align*}
	Replacing $b(t)$ with $d(t) \coloneqq f(b'(t))$, where $b$ can be reconstructed from $d$ via $b_d(t) \coloneqq u_0(0) + \int_{0}^{t} f^{-1}(d(\tau)) \der \tau$ we are left with solving
	\begin{align}\label{eq:loc:boundary_equation}
		d'(t) = \frac{1}{c(0)} \Phi_+(b_d, u_0, u_1)_z(0, t).
	\end{align}
	Therefore, it suffices to show that \eqref{eq:loc:boundary_equation} with initial datum $d(0) = f(u_1(0))$ has a unique solution.
	\medskip 

	\textbf{Uniqueness:}
	Assume that $d, \tilde d$ are solutions to \eqref{eq:loc:boundary_equation} that coincide up to time $t_\star \geq 0$, but not at time $t_n$ for some $t_n \geq 0$ with $t_n \downarrow t_\star$ as $n \to \infty$. Define $\delta(t) \coloneqq \abs{f^{-1}(d(t)) - f^{-1}(\tilde d(t))}$. For $\varepsilon > 0$ consider the function
	\begin{align*}
		h_\varepsilon(t) \coloneqq \varepsilon(1 + t - t_\star) + \frac{1}{c(0)} \int_{t_\star}^{t} \left(- \delta(s) + \beta \int_{t_\star}^{s} \delta(\tau) \der \tau\right) \der s,
	\end{align*}
	where $\beta$ is the constant from \cref{lem:fine_boundary_estimate}.

	\medskip
	\textit{Claim:} The inequality $\abs{d(t) - \tilde d(t)} < h_\varepsilon(t)$ holds for all $t \geq t_\star$.
	
	Clearly, the claim holds true for $t = t_\star$, and thus by continuity for $t$ close to $t_\star$. Assume the claim is false. Then there exists some minimal $t_i > t_\star$ such that $\abs{d(t_i) - \tilde d(t_i)} = h_\varepsilon(t_i)$. W.l.o.g. assume that $d(t_i) \geq \tilde d(t_i)$. Since $d(t) - \tilde d(t) < h_\varepsilon(t)$ for $t_\star\leq t < t_i$, we get $d'(t_i) - \tilde d'(t_i) \geq h_\varepsilon'(t_i)$ which implies
	\begin{align*}
		\frac{1}{c(0)} \Phi_+(b_d, 0, 0)_z(0, t_i) - \frac{1}{c(0)} \Phi_+(b_{\tilde d}, 0, 0)_z(t_i) \geq \varepsilon + \frac{1}{c(0)} \left(- \delta(t_i) + \beta \int_{t_\star}^{t_i} \delta(\tau) \der \tau\right) 	
	\end{align*}
    and hence 
    \begin{equation} \label{eq:loc:for_contra}
    \Phi_+(b_d - b_{\tilde d}, 0, 0)_z(0, t_i) + \delta(t_i) > \beta \int_{t_\star}^{t_i} \delta(\tau) \der \tau \geq 0.
	\end{equation}
	On the other hand, setting $b \coloneqq b_d - b_{\tilde d}$ we have
	\begin{align*}
		\abs{\Phi_+(b, 0, 0)_z(0, t_i) + b'(t_i)} \leq \beta \int_{t_\star}^{t_i} \abs{b'(\tau)} \der \tau
	\end{align*}
	due to \cref{lem:fine_boundary_estimate}.
	Since $b'(t_i) = f^{-1}(d(t_i)) - f^{-1}(\tilde d(t_i))$ and since $f^{-1}$ is increasing, we see that $b'(t_i) = \delta(t_i)$. Combining these facts, we find
	\begin{align*}
		\abs{\Phi_+(b, 0, 0)_z(0, t_i) + \delta(t_i)} \leq \beta \int_{t_\star}^{t_i} \delta(\tau) \der \tau
	\end{align*}
	which contradicts \eqref{eq:loc:for_contra}. So the claim holds. 
	
	Letting $\varepsilon$ go to $0$, we obtain
	\begin{align*}
		\abs{d(t) - \tilde d(t)} \leq \frac{1}{c(0)} \int_{t_\star}^{t} \left(- \delta(s) + \beta \int_{t_\star}^{s} \delta(\tau) \der \tau\right) \der s
	\end{align*}
	for any $t \geq t_\star$. Fubini implies that the term on the right-hand side is negative for $t \in (t_\star, t_\star + \frac{1}{\beta})$, a contradiction.
	\medskip
	
	\textbf{Existence:}
	Let $D, \mu > 0$. Consider the set
	\begin{align*}
		K \coloneqq \{d \in W^{1, \infty}([0, r]) \colon d(t_0) = f^{-1}(u_1(0)), \abs{d(t)} \leq D \ee^{\mu t}, \abs{d'(t)} \leq D \mu \ee^{\mu t} \mbox{ for } t\in [0,r]\},
	\end{align*}
	which is a convex and compact subset of $C([0, r])$, as well as the operator
	\begin{align*}
		T \colon K \to C([0, r]), \quad T(d)(t) = f^{-1}(u_1(0)) + \frac{1}{c(0)} \int_{t_0}^t \Phi_+(b_d, u_0, u_1)_z(0, \tau) \der \tau.
	\end{align*}
	We choose $D \coloneqq \max\set{\abs{f^{-1}(u_1(0))}, 1}$, so that $K$ is nonempty as it contains the constant function $d \equiv f^{-1}(u_1(0))$. 
	To see that $T$ is continuous, let $d_n \in K$ with $d_n \to d$ in $C([0, r])$ as $n \to \infty$. As $f^{-1}$ is uniformly continuous on $[-D \ee^{\mu r}, D \ee^{\mu r}]$, we have $f^{-1} \circ d_n \to f^{-1} \circ d$ in $C([0, r])$, from which it follows that
	\begin{align*}
		b_{d_n} &= u_0(0) + \int_0^{(\impvar)} f^{-1}(d_n(\tau)) \der \tau
	\intertext{converges to}
		b_{d} &= u_0(0) + \int_0^{(\impvar)} f^{-1}(d(\tau)) \der \tau.
	\end{align*}
	in $C^1([0, r])$. Due to \cref{rem:estimate:boundary_solution_op}, the operator $\Phi_+(\impvar, u_0, u_1) \colon C^1([0, r]) \to C^1(\tri_+)$ is continuous. Hence $T(d_n) \to T(d)$ in $C([0, r])$ as $n \to \infty$.
	
	To check that $T$ maps into $K$, we need to verify that for any $d \in K$ one has
	\begin{align}\label{eq:loc:exist:1}
		\abs{T(d)'(t)} \leq D \mu \ee^{\mu t}.
	\end{align}
	Notice that $\abs{d(t)} \leq D \ee^{\mu t}$ follows from \eqref{eq:loc:exist:1} by integration.
	By assumption \eqref{ass:linear_growth} on the growth on $f$ we have $\abs{f^{-1}(y)} \leq \frac{\abs{y} + B}{A}$, and in particular $\abs{b_d'(t)} = \abs{f^{-1}(d(t))} \leq  \frac{D \ee^{\mu t} + B}{A}$. We use this inequality, $\abs{b_d(t)} \leq \abs{u_0(0)} + t \norm{b_d'}_\infty$ as well as \cref{rem:estimate:boundary_solution_op} to estimate 
	\begin{align*}
		\abs{T(d)'(t)} 
		&= \frac{1}{c(0)} \abs{\Phi_+(b_d, u_0, u_1)_z(0, t)} \\
		&\leq \frac{C}{c(0)} \left( \norm{b_d}_{[0, t], C^1} + \norm{u_0}_{C^1} + \norm{u_1}_{\infty} \right) \\
		&\leq \frac{C}{c(0)} \left( (1 + t) \norm{b_d'}_{[0, t], \infty} + 2 \norm{u_0}_{C^1} + \norm{u_1}_{\infty} \right) \\
		&\leq \frac{C}{c(0)} \left( (1 + t) \frac{D \ee^{\mu t} + B}{A} + 2 \norm{u_0}_{C^1} + \norm{u_1}_{\infty} \right) \\
		&\leq \frac{C}{c(0)} \left( (1 + r) \frac{D + B}{A} + 2 \norm{u_0}_{C^1} + \norm{u_1}_{\infty} \right) \ee^{\mu t}.
	\end{align*}
	Therefore $T$ maps $K$ into itself if we choose
	\begin{align*}
		\mu \coloneqq \frac{C}{c(0) D} \left( (1 + r) \frac{D + B}{A} + 2 \norm{u_0}_{C^1} + \norm{u_1}_{\infty} \right).
	\end{align*}
    Hence existence follows by applying Schauder's fixed-point Theorem.
\end{proof}

	With these auxiliary results finished, we are able to prove the main theorem. 

\begin{proof}[Proof of \cref{thm:main} with additional assumption \eqref{ass:linear_growth}]~\\
	\textbf{Step 1 - Constructing a solution:}
	
	Denote by $\calC$ the set containing all jump, boundary and plain triangles where the heights $r$ have to satisfy $r \norm{\frac{c_z}{c}}_\infty < 1$.
	As we have just shown in the previous three lemmata, \eqref{eq:IP_z} admits a unique solution on each $\tri \in \calC$. 
	Since $\calC$ is closed with respect to finite intersection, we obtain a solution $u$ of \eqref{eq:IP_z} on $\cup_{\tri \in \calC} \tri$. 
	Note that $[0, \infty) \times [0, h) \subseteq \cup_{\tri \in T} \tri$ where 
	\begin{align*}
		h \coloneqq \tfrac12 \min\left\{\norm{\frac{c_z}{c}}_\infty^{-1}, \abs{d_1 - d_2} \colon d_1, d_2 \in D(c) \cup \set{0}, d_1 \neq d_2\right\}.
	\end{align*}
	By restriction, we therefore obtain a solution $u^{(1)}$ of \eqref{eq:IP} on $[0, \infty) \times [0, \tilde h]$ for any $0 < \tilde h < h$. Restarting with initial data $u_0^{(2)}(z) = u^{(1)}(z, \tilde h)$ and $u_1^{(2)}(z) = u_t^{(1)}(z, \tilde h)$, the above method yields a solution $u^{(2)}$ on 
	$[0, \infty) \times [0, \tilde h]$. We repeat this argument to construct solutions $u^{(k)}$ for $k \in \N$. Finally, we define the map $u \colon [0, \infty) \times [0, \infty) \to \R$ by $u(z, (k-1) \tilde h + \tau) = u^{(k)}(z, \tau)$ for $\tau \in [0, \tilde h]$, which solves \eqref{eq:IP}.
	\medskip
	
	\textbf{Step 2 - Uniqueness:}
	
	Assume that $u, \tilde u \colon \Omega \to \R$ are two different solutions to \eqref{eq:IP_z}, where $\Omega = \{(z, t) \mid t \leq h(z)\}$ is an admissible domain. 
	So there exists $(z_0, t_0) \in \Omega$ with $u(z_0, t_0) \neq \tilde u(z_0, t_0)$. 
	Consider the (possibly cut-off) triangle $\tri \coloneqq \tri(z_0, 0, t_0) \cap \{z \geq 0\}$ and define the set $N \coloneqq \{(z, t) \in \tri \mid u(z, t) \neq \tilde u(z, t)\}$ and $t_{\inf} \coloneqq \inf P_t(N)$, where $P_t$ denotes the projection onto the second variable. 
	Choose some sequence $(z_n, t_n) \in N$ with $t_n \to t_{\inf}$ and $z_n \to z_\infty \in [0, \infty)$.
	
	For $\varepsilon>0$ consider the (possibly cut-off) triangle $\tri_\varepsilon \coloneqq \tri\cap \tri(z_\infty, t_{\inf}, \varepsilon) \cap \{z \geq 0\}$ with base $B_\varepsilon$. 
	\medskip
	
	\textit{Claim:} $u(z, t_{\inf}) = \tilde u(z, t_{\inf})$ and $u_t(z, t_{\inf}) = \tilde u_t(z, t_{\inf})$ hold for all $z \in B_\varepsilon$.
	
	If $t_{\inf} = 0$, this holds because both $u$ and $\tilde u$ satisfy the same initial conditions. If $t_{\inf} > 0$, by assumption we have $u(z, t) = \tilde u(z, t)$ for $z \in B_\varepsilon$ and $t < t_{\inf}$ as $(z, t) \in \Delta$ and therefore also $u_t(z, t) = \tilde u_t(z, t)$, so that the claim is obtained by taking the limit $t \to t_{\inf}$.
	
	If we choose $\varepsilon$ small enough, then $\tri_\varepsilon$ is a jump (if $z_\infty \in D(c)$), boundary (if $z_\infty = 0$) or plain triangle (otherwise). By the previously established uniqueness results on these triangles, $u$ and $\tilde u$ must coincide on $\tri_\varepsilon$. But since $t_n \geq t_{\inf}$ for all $n$, we have $(z_n, t_n) \in \tri_\varepsilon$ for $n$ sufficiently large, so that $u(z_n, t_n) = \tilde u(z_n, t_n)$. This cannot be since $(z_n, t_n) \in N$.
\end{proof}

\begin{remark}[Modifications for the bounded domain version]
	 In order to capture the homogeneous Dirichlet boundary condition for the bounded domain version of the theorem, we also need to consider ''Dirichlet'' triangles $\lefttri$ with center $z_0 = L$. Problem \eqref{eq:IP} is well-defined on the domain $\lefttri$ assuming $r \norm{\frac{c_z}{c}}_\infty <1$. In fact the solution on ''Dirichlet'' triangles is simply given by $u = \Phi_-(0, u_0, u_1)$. We can then proceed as in the above proof to show existence and uniqueness of solutions.
\end{remark}

\section{Energy, Momentum, and Completion of \cref{thm:main}} \label{sec:energy}
We recall that the energy of \eqref{eq:IP} is given by
\begin{align*} 
	E(u, t) &\coloneqq \tfrac12 \int_{0}^{\infty} \left(V(x) u_t(x, t)^2 + u_x(x, t)^2\right) \der x + F(u_t(0, t)) \\
	&= \tfrac12 \int_0^\infty \left(\frac{1}{c(z)^2} u_t(z, t)^2 + \left(\frac{u_z(z, t)}{c(z)}\right)^2 \right) \cdot c(z) \der z + F(u_t(0, t)) \\
	&= \tfrac12 \int_0^\infty \frac{1}{c(z)} \left(u_t(z, t)^2 + u_z(z, t)^2 \right) \der z + F(u_t(0, t))
\end{align*}
where $F(y)=y f(y) - \int_{0}^{y} f(v) \der v$. In $(z, t)$--coordinates the momentum reads
\begin{align*}
	M(u, t) = \int_0^\infty \frac{1}{c} u_t \der z + f(u_t(0, t)).
\end{align*} 
We now show that both quantities are time-invariant.

\begin{proof}[Proof of \cref{thm:conservation}]
	Let $\Omega \subseteq [0, \infty) \times [0, \infty)$ be a Lipschitz domain such that $c$ is $C^1$ on $\Omega$. Recall that $(\partial_t \mp \partial_z) (u_t \pm u_z) u + \frac{c_z}{c} u_z = 0$. In the following, for a term $a(\pm, \mp)$ which may have $\pm$ or $\mp$ signs, we write ${\sum}^\pm a(\pm, \mp) \coloneqq a(+, -) + a(-, +)$.

	\textbf{Part 1: Energy.} With $\nu$ being the outer normal at $\bd \Omega$ we calculate
	\begin{align*}
		0 &= {\sum}^\pm \int_\Omega \left[ (\partial_t \mp \partial_z) (u_t \pm u_z) u + \frac{c_z}{c} u_z \right] \cdot \frac{1}{c} (u_t \pm u_z) \der (z, t)
		\\ &= {\sum}^\pm  \int_{\bd \Omega} (\nu_2 \mp \nu_1) \frac{1}{c} (u_t \pm u_z)^2 \der \sigma
		\\ &\quad+ {\sum}^\pm \int_\Omega \left(\frac{c_z}{c^2} u_z (u_t \pm u_z) - \frac{1}{c} (u_t \pm u_z) \cdot (\partial_t \mp \partial_z) (u_t \pm u_z) \mp \frac{c_z}{c^2} (u_t \pm u_z)^2\right) \der (z, t).
	\end{align*}
	The sum $\sum^\pm$ over the boundary integrals can be simplified to
	\begin{align*}
		{\sum}^\pm  \int_{\bd \Omega} (\nu_2 \mp \nu_1) \frac{1}{c} (u_t \pm u_z)^2 \der \sigma
		= \int_{\bd \Omega} \left(\frac{2}{c} \nu_2 (u_t^2 + u_z^2) - \frac{4}{c} \nu_1 u_t u_z\right) \der \sigma.
	\end{align*}
	The sum $\sum^\pm$ of the integrands in the integral over $\Omega$ vanishes as can be seen by the following calculation using once more the differential equation $(\partial_t \mp \partial_z) (u_t \pm u_z) u + \frac{c_z}{c} u_z = 0$:
	\begin{align*}
		&{\sum}^\pm \left(\frac{c_z}{c^2} u_z (u_t \pm u_z) - \frac{1}{c} (u_t \pm u_z) \cdot (\partial_t \mp \partial_z) (u_t \pm u_z) \mp \frac{c_z}{c^2} (u_t \pm u_z)^2\right)
		\\ &\quad= {\sum}^\pm \left(\frac{c_z}{c^2} u_z (u_t \pm u_z) + \frac{1}{c} (u_t \pm u_z) \frac{c_z}{c} u_z \mp \frac{c_z}{c^2} (u_t \pm u_z)^2\right)
		\\ &\quad= \frac{c_z}{c^2} {\sum}^\pm \left(2 u_z (u_t \pm u_z) \mp (u_t \pm u_z)^2\right) = 0.
	\end{align*}
	Hence
	\begin{align}\label{eq:loc:energy_alt:1}
		\int_{\bd \Omega} \left(\frac{2}{c} \nu_2 (u_t^2 + u_z^2) - \frac{4}{c} \nu_1 u_t u_z\right) \der \sigma = 0.
	\end{align}

	Since $D(c)$ and $D(c_z)$ are discrete sets, we find an increasing sequence $0 = a_1 < a_2 < a_3 < \dots$ with $a_k \to \infty$ as $k \to \infty$ such that $D(c) \cup D(c_z) \subseteq \set{a_k \colon k \in \N}$. 
	
	Now let $t_1 < t_2 \in \R$ and $K \in \N$. We choose $\Omega = [a_k, a_{k+1}] \times [t_1, t_2]$ and sum \eqref{eq:loc:energy_alt:1} from $k=1$ to $K$. As terms along common boundaries cancel, we obtain
	\begin{align*}
		0 &= \int_{\bd ([0, a_{K+1}] \times [t_1, t_2])} \left(\frac{2}{c} \nu_2 (u_t^2 + u_z^2) - \frac{4}{c} \nu_1 u_t u_z\right) \der \sigma
	\end{align*}
	or equivalently
	\begin{multline*}
		\tfrac12 \left.\int_{0}^{a_{K+1}} \left(\frac{1}{c} u_t^2 + \frac{1}{c} u_z^2\right) \der z \right\vert _{t=t_2}
		\\
		=\tfrac12 \left.\int_{0}^{a_{K+1}} \left(\frac{1}{c} u_t^2 + \frac{1}{c} u_z^2\right) \der z \right\vert _{t=t_1}
		- \left. \int_{t_1}^{t_2} \frac{1}{c} u_t u_z \der t \right\vert _{z=a_{K+1}} + \left. \int_{t_1}^{t_2} \frac{1}{c} u_t u_z \der t \right\vert _{z=0}.
	\end{multline*}
    The estimates established in \cref{cor:operator_estimates} and the assumptions on the initial conditions $u_0, u_1$ show that $u_t(z, t)$ and $u_z(z, t)$ converge to $0$ as $z \to \infty$ uniformly on $[t_1, t_2]$. In the limit $K \to \infty$, we thus obtain
	\begin{align*}
		\tfrac12 \left.\int_{0}^{\infty} \left(\frac{1}{c} u_t^2 + \frac{1}{c} u_z^2\right) \der z \right\vert _{t=t_2}
		= \tfrac12 \left.\int_{0}^{\infty} \left(\frac{1}{c} u_t^2 + \frac{1}{c} u_z^2\right) \der z \right\vert _{t=t_1}
		+ \left. \int_{t_1}^{t_2} \frac{1}{c} u_t u_z \der t \right\vert _{z=0}.
	\end{align*}
	Switching back to $(x,t)$--coordinates, we infer
	\begin{align*}
		\left. \int_{t_1}^{t_2} u_t u_x \der t \right\vert _{x=0}
		&= \int_{t_1}^{t_2} u_t(0, t) u_x(0, t) \der t\\
		&= \int_{t_1}^{t_2} u_t(0, t) f(u_t(0, t))_t \der t
		= F(u_t(0, t_2)) - F(u_t(0, t_1))
	\end{align*}
	where the last equality is due to \cref{lem:aux:chain_rule_formula}. This shows the claimed energy conservation:
	\begin{align*}
		\tfrac12 \left.\int_{0}^{\infty} \left(V(x) u_t^2 + u_x^2\right) \der x + F(u_t(0, t)) \right\vert _{t=t_2}
		= \tfrac12 \left.\int_{0}^{\infty} \left(V(x) u_t^2 + u_x^2\right) \der x + F(u_t(0, t)) \right\vert _{t=t_1}.
	\end{align*}

	\textbf{Part 2: Momentum.}
	We calculate
	\begin{equation} \label{eq:loc:momentum_alt:1}
	\begin{split}
		0 &= {\sum}^\pm  \int_{\Omega} \frac{1}{c} \left[(\partial_t \pm \partial_z) (u_t \mp u_z) + \frac{c_z}{c} u_z \right] \der (z, t) \\ 
		&= {\sum}^\pm  \int_{\bd \Omega} (\nu_2 \pm \nu_1) \frac{1}{c} (u_t \mp u_z) \der \sigma \\
		&\quad +{\sum}^\pm \int_\Omega \left(\pm \frac{c_z}{c^2} (u_t \mp u_z) + \frac{c_z}{c^2} u_z\right) \der (z, t) \\
		&= 2 \int_{\bd \Omega} \left(\nu_2 \frac{1}{c} u_t - \nu_1 \frac{1}{c} u_z\right) \der \sigma. 
    \end{split}
	\end{equation}
    Again we choose $\Omega = [a_{k}, a_{k+1}] \times [t_1, t_2]$, and sum \eqref{eq:loc:momentum_alt:1} from $k=1$ to $K$. As before all terms along common boundaries cancel, whence we obtain
	\begin{align*}
		\left. \int_0^{a_{K+1}} \frac{1}{c} u_t \der z \right\vert _{t=t_2}
		= \left. \int_0^{a_{K+1}} \frac{1}{c} u_t \der z \right\vert _{t=t_1} 
		+ \left. \int_{t_1}^{t_2} \frac{1}{c} u_z \der t \right\vert _{z=a_{K+1}}
		- \left. \int_{t_1}^{t_2} \frac{1}{c} u_z \der t \right\vert _{z=0}.
	\end{align*}
	Since
	\begin{align*}
		\left. \int_{t_1}^{t_2} \frac{1}{c} u_z \der t \right\vert _{z=0}
		= \int_{t_1}^{t_2} f(u_t(0, t))_t \der t = f(u_t(0, t_2)) - f(u_t(0, t_1)),
	\end{align*}
	in the limit $K \to \infty$ we find the claimed momentum conservation:
	\begin{align*}
		\left. \int_0^\infty \frac{1}{c^2} u_t \der x + f(u_t(0, t)) \right\vert _{t=t_2}
		= \left. \int_0^\infty \frac{1}{c^2} u_t \der x + f(u_t(0, t)) \right\vert _{t=t_1}.
		&\qedhere
	\end{align*}
\end{proof}

In \cref{sec:main_proof}, we required an extra growth condition \eqref{ass:linear_growth} on $f$ in order to prove a first version of Theorem~\ref{thm:main}. We now discuss how to exploit the energy conservation to eliminate this extra growth assumption and prove Theorem~\ref{thm:main} in full generality.

\begin{lemma} \label{lem:localized_energy}
	For $t > 0$ the estimate
	\begin{align*}
		F(u_t(0, t)) 
		\leq F(u_1(0)) + \tfrac12 \int_{0}^{\kappa^{-1}(t)} \left(V(x) u_1(x)^2 + u_{0,x}(x)^2\right) \der x
	\end{align*}
	holds, where $\kappa(x)=\int_0^x \tfrac{1}{c(s)} \der s = \int_0^x \sqrt{V(s)}\der s$. 
\end{lemma}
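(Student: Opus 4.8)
The plan is to combine finite speed of propagation with a localized version of the energy identity from the proof of \cref{thm:conservation}. Work in $(z,t)$--coordinates, fix $t>0$, and let $u$ be a $C^1$--solution of \eqref{eq:IP_z} on an admissible domain $\Omega$ with $(0,t)\in\Omega$. Since $u_t(0,t)$ is determined by the data in the backward characteristic cone of $(0,t)$, the idea is to run the energy argument not on a space-time strip but on the triangle $\tri_+(0,0,t)=\{(z,t'):z\ge0,\ t'\ge0,\ z+t'\le t\}$, whose base on $\{t'=0\}$ is $[0,t]$, i.e.\ $[0,\kappa^{-1}(t)]$ in the $x$--variable. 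Note that $\tri_+(0,0,t)\subseteq\Omega$: if $\Omega=\{t'\le h(z)\}$ with $h$ $1$--Lipschitz and $t\le h(0)$, then $h(z)\ge h(0)-z\ge t-z$. By \eqref{ass:potential:2} the base $[0,t]$ meets $D(c)\cup D(c_z)$ in only finitely many points $a_1<\dots<a_m$, and the vertical lines $\{z=a_j\}$ cut $\tri_+(0,0,t)$ into finitely many Lipschitz polygons on each of which $c$ is $C^1$.

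On each of these polygons I would invoke \eqref{eq:loc:energy_alt:1}, which asserts that $\int_{\bd D}\bigl(\tfrac2c\nu_2(u_t^2+u_z^2)-\tfrac4c\nu_1u_tu_z\bigr)\der\sigma=0$ for any such polygon $D$, and sum over the pieces. Along an interior segment $\{z=a_j\}$ the two adjacent pieces see the integrand $\mp\tfrac4c u_tu_z=\mp4u_tu_x$ with opposite signs, and since $u_t$ and $u_x=\tfrac1c u_z$ are continuous for a $C^1$--solution, these contributions cancel, exactly as in the telescoping sum in the proof of \cref{thm:conservation}; the identity thus survives on the full triangle. It remains to evaluate the three faces of $\bd\tri_+(0,0,t)$. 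On the base ($\nu=(0,-1)$), using $u_t(\cdot,0)=u_1$, $u_z(\cdot,0)=u_{0,z}$ and changing back to the $x$--variable, the contribution is $-2\int_0^{\kappa^{-1}(t)}\bigl(Vu_1^2+u_{0,x}^2\bigr)\der x$. On the left face $\{z=0\}$ ($\nu=(-1,0)$) the integrand is $\tfrac4{c(0)}u_tu_z$; inserting the boundary condition $\tfrac1{c(0)}u_z(0,\cdot)=(f(u_t(0,\cdot)))_t$ and applying \cref{lem:aux:chain_rule_formula} gives $4\bigl(F(u_t(0,t))-F(u_1(0))\bigr)$. On the hypotenuse $\{z+t'=t\}$ ($\nu=\tfrac1{\sqrt2}(1,1)$) the integrand equals $\tfrac1{\sqrt2}\cdot\tfrac2c(u_t-u_z)^2\ge0$, so that face contributes a nonnegative quantity, which I would simply drop. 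Adding the three contributions yields
\[
	4\bigl(F(u_t(0,t))-F(u_1(0))\bigr)\ \le\ 2\int_0^{\kappa^{-1}(t)}\bigl(Vu_1^2+u_{0,x}^2\bigr)\der x,
\]
which is the asserted estimate after division by $4$.

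These computations are routine once \eqref{eq:loc:energy_alt:1} is available on $\tri_+(0,0,t)$, so the only point that genuinely needs care is that justification. Two features differ from \cref{thm:conservation}: the domain is a triangle, not a rectangle, and one face is characteristic. Neither causes a problem. Identity \eqref{eq:loc:energy_alt:1} only uses that $u\in C^1$ with an a.e.\ mixed directional derivative and that $c\in C^1$, none of which is affected by the shape of the domain; and the characteristic face is in fact helpful, since it is exactly what turns the boundary integrand there into the sign-definite $(u_t-u_z)^2$ that we discard. The bookkeeping for the discontinuities of $c$ and $c_z$ --- splitting along the jump lines and cancelling the interior fluxes by continuity of $u_x$ and $u_t$ --- is the same as in \cref{thm:conservation}.
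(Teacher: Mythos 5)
Your proof is correct, and it takes a genuinely different route from the paper's. The paper's proof of \cref{lem:localized_energy} never touches the flux identity directly: it replaces $(u_0,u_1)$ by linearly-ramped cutoffs $(\tilde u_0,\tilde u_1)$ vanishing past $z=t_1+\varepsilon$, denotes by $\tilde u$ the corresponding solution, invokes finite speed of propagation and uniqueness to get $\tilde u_t(0,t_1)=u_t(0,t_1)$, applies the already-proved global conservation law $E(\tilde u,t_1)=E(\tilde u,0)$ (which needs data decaying at infinity, hence the cutoff), drops the nonnegative integral from $E(\tilde u,t_1)$, and lets $\varepsilon\to0$. You instead apply \eqref{eq:loc:energy_alt:1} directly on the backward light cone $\tri_+(0,0,t)$, split along $D(c)\cup D(c_z)$, cancel interior fluxes by continuity of $u_t$ and $u_x$, and observe that the integrand on the characteristic hypotenuse collapses to $\tfrac{\sqrt2}{c}(u_t-u_z)^2\ge0$, which gives the inequality upon discarding it. Both arguments are sound; the paper's buys the ability to reuse \cref{thm:conservation} as a black box at the cost of an auxiliary existence step and a limiting argument, while yours is purely local, needs no modified solution or $\varepsilon$-limit, and makes transparent exactly where the inequality (rather than equality) comes from, namely the sign-definite flux through the characteristic boundary. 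The one point you should be a bit more explicit about if writing this up is that \eqref{eq:loc:energy_alt:1} is derived in the paper only for rectangles, so you would want to note (as you do in passing) that the Gauss--Green computation behind it is insensitive to the shape of the Lipschitz piece, including the fact that one face is characteristic.
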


\begin{proof}
	Fix $t_1 > 0$, let $\varepsilon > 0$ and define modified initial data $\tilde u_0, \tilde u_1 \colon [0, \infty) \to \R$ by setting
	\begin{align*}
		\tilde u_0'(z) = \begin{cases}
			u_0'(z), & z \leq t_1, \\
			\frac{t_1 + \varepsilon - z}{\varepsilon} u_0'(t_1), & t_1 \leq z \leq t_1 + \varepsilon, \\
			0, & z \geq t_1 + \varepsilon, 
		\end{cases}\qquad
		\tilde u_1(z) = \begin{cases}
			u_1(z), & z \leq t_1, \\
			\frac{t_1 + \varepsilon - z}{\varepsilon} u_1(t_1), & t_1 \leq z \leq t_1 + \varepsilon, \\
			0, & z \geq t_1 + \varepsilon, 
		\end{cases}
	\end{align*}
	and $\tilde u_0(0) = u_0(0)$. Denote the solution to \eqref{eq:IP_z} corresponding to these initial data by $\tilde u$. By uniqueness of the solution, $u(z, t) = \tilde u(z, t)$ for $\abs{z} + \abs{t} \leq t_1$. In particular, $\tilde u_t(0, t_1) = u_t(0, t_1)$. This yields
	\begin{align*}
		&F(u_t(0, t_1)) \\
		&\quad= F(\tilde u_t(0, t_1)) \leq E(\tilde u, t_1) 
		= E(\tilde u, 0) \\
		&\quad= F(\tilde u_t(0, 0)) + \tfrac12 \int_{0}^{\infty} \left(V(x) \tilde u_1(x)^2 + \tilde u_0'(x)^2\right) \der x \\
		&\quad= F(u_1(0)) + \tfrac12 \int_{0}^{\kappa^{-1}(t_1)} \left(V(x) u_1(x)^2 + u_0'(x)^2\right) \der x
		+ \tfrac12 \int_{\kappa^{-1}(t_1)}^{\kappa^{-1}(t_1 + \varepsilon)} \left(V(x) \tilde u_1(x)^2 + \tilde u_0'(x)^2\right) \der x.
	\end{align*}
	Letting $\varepsilon \to 0$, the last term goes to $0$. 
\end{proof}

\begin{proof}[Proof of \cref{thm:main} without additional assumption \eqref{ass:linear_growth}]~\\
	Fix $T > 0$ and let
	\begin{align*}
		C \coloneqq F(u_1(0)) + \tfrac12 \int_{0}^{\kappa^{-1}(T)} \left(V(x) u_1(x)^2 + u_{0,x}(x)^2\right) \der x
	\end{align*}
	Since $F(y)=\int_0^y f(y)-f(x)\,dx$ we see that $F(y) \to \infty$ as $y \to \pm \infty$. Therefore the set $\{y \colon F(y) \leq C\}$ is contained in the interval $[-K, K]$ for some $K > 0$. 
	Now consider the cut-off version of $f$ given by
	\begin{align*}
		f_K(y) = \begin{cases}
			y - K + f(K), & y \geq K, \\
			f(y), & -K \leq y \leq K, \\
			y + K + f(-K), & y \leq -K, \\
		\end{cases}
	\end{align*}
	which satisfies the growth conditions from \cref{sec:main_proof}. Therefore, \cref{thm:main} can be applied to \eqref{eq:IP} with $f$ replaced by $f_K$ and we obtain a solution $u_K$ on $[0,\infty)\times [0,T]$. \cref{lem:localized_energy} gives $F_K(u_{K,t}(0,t))\leq C$, so that $u_{K,t}(0,t)$ takes values in $[-K,K]$ where the functions $f, F$ and $f_k, F_k$ coincide. Hence $u_K$ solves the original problem \eqref{eq:IP} up to time $T$.
\end{proof}

Next, we verify that $C^1$-solutions to \eqref{eq:IP} are indeed weak solutions in the sense of \cref{def:weak_solution}.
\begin{proposition}\label{prop:c1_implies_weak}
	A $C^1$-solution to \eqref{eq:IP} is also a weak solution to \eqref{eq:IP}.
\end{proposition}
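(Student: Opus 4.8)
The plan is the following. The membership requirements of \cref{def:weak_solution} are immediate for a $C^1$-solution $u$: since $u\in\spacext{[0,\infty)\times[0,\infty)}$, the functions $u,u_t,u_x$ are continuous, so $u\in W^{1,1}_{\text{loc}}$; likewise $t\mapsto f(u_t(0,t))$ is continuous, hence in $L^1_{\text{loc}}([0,\infty))$; and $u(\impvar,0)=u_0$ by \cref{def:solution:x}(iii). So only the integral identity needs to be checked. Fix $\varphi\in C_c^\infty([0,\infty)\times[0,\infty))$ and pick $R>0$ with $\supp\varphi\subseteq[0,R)\times[0,R)$. By \eqref{ass:potential:1}--\eqref{ass:potential:2} there are finitely many points $0=x_0<x_1<\dots<x_N<x_{N+1}=R$ with $(D(c)\cup D(c'))\cap(0,R]\subseteq\{x_1,\dots,x_N\}$, and I would argue on each closed strip $[x_i,x_{i+1}]\times[0,\infty)$ separately.

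On the open strip $(x_i,x_{i+1})\times(0,\infty)$ the wave speed $c$ (hence $V$) is $C^1$, and there $u$ is a distributional solution of $Vu_{tt}-u_{xx}=0$, i.e.\ $\int(Vu_t\psi_t-u_x\psi_x)\der(x,t)=0$ for all $\psi\in C_c^\infty((x_i,x_{i+1})\times(0,\infty))$. This is the point where the representation formula is used: by \cref{cor:linear_uniqueness}/\cref{thm:wave_solution_formula} the quantity $u_t+cu_x$ is, along each characteristic $\kappa(x)+t=\mathrm{const}$, Lipschitz with a.e.\ derivative equal to the directional derivative of \cref{def:solution:x}(i), and analogously $u_t-cu_x$ along the opposite family (cf.\ \cref{rem:opposite_factorization}); integrating these identities along characteristics and transforming back to $(x,t)$ yields the displayed distributional equation. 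To pass from $\psi$ supported in the open strip to the given $\varphi$, I would multiply $\varphi$ by a cutoff $\eta_\delta(x)\zeta_\delta(t)$ — with $\eta_\delta,\zeta_\delta$ smooth, equal to $1$ away from $\{x_i,x_{i+1}\}$ resp.\ from $0$, and vanishing within $\delta$ thereof — apply the distributional identity, and let $\delta\to0$. The $\zeta_\delta'$-term then contributes $\int_{x_i}^{x_{i+1}}Vu_1\varphi(\impvar,0)\der x$ (using $u_t(\impvar,0)=u_1$) and the $\eta_\delta'$-term the difference of the one-sided traces of $u_x$ at $x_i,x_{i+1}$, so that for every $\varphi$
\[
\int_{x_i}^{x_{i+1}}\!\!\int_0^\infty\!\bigl(Vu_t\varphi_t-u_x\varphi_x\bigr)\der t\der x=-\int_{x_i}^{x_{i+1}}\!Vu_1\varphi(\impvar,0)\der x+\int_0^\infty\!\bigl(u_x(x_i,t)\varphi(x_i,t)-u_x(x_{i+1},t)\varphi(x_{i+1},t)\bigr)\der t .
\]

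Summing over $i=0,\dots,N$, the contributions at every interior point $x_i$ ($1\le i\le N$) cancel, because $u_x$ is continuous across the jumps of $V$ — this is the crucial point, and it is exactly what the space $\spacext{\cdot}$ encodes, namely that $u_x=\tfrac1cu_z$ and not $u_z$ is continuous; the contribution at $x_{N+1}=R$ vanishes since $\varphi(R,\impvar)=0$. This leaves
\[
\int_0^\infty\!\!\int_0^\infty\!\bigl(Vu_t\varphi_t-u_x\varphi_x\bigr)\der x\der t=-\int_0^\infty\!Vu_1\varphi(\impvar,0)\der x+\int_0^\infty\!u_x(0,t)\varphi(0,t)\der t .
\]
Finally, by \cref{def:solution:x}(ii) the map $t\mapsto f(u_t(0,t))$ is differentiable with continuous derivative $u_x(0,\impvar)$, hence $C^1$, and $f(u_t(0,0))=f(u_1(0))$; integrating by parts in $t$ replaces $\int_0^\infty u_x(0,t)\varphi(0,t)\der t$ by $-f(u_1(0))\varphi(0,0)-\int_0^\infty f(u_t(0,t))\varphi_t(0,t)\der t$. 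Substituting this and rearranging gives precisely the identity in \cref{def:weak_solution}.

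The main obstacle is the regularity point flagged above: because the quasilinear Neumann condition propagates jumps of the second derivatives into the domain, $u$ is genuinely only $C^1$, so none of the integrations by parts against $\varphi$ are classical and must be routed through the characteristic-wise Lipschitz regularity provided by \cref{thm:wave_solution_formula} together with the cutoff argument on each strip. The rest — the decomposition into strips, the cancellation of interior traces (which hinges on continuity of $u_x$), the boundary terms at $t=0$, and the final integration by parts at $x=0$ — is routine bookkeeping.
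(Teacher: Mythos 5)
Your proposal is correct and follows essentially the same strategy as the paper's proof: establish a weak identity on subregions where the coefficient is smooth (relying on the characteristic form of the equation, since $u$ is only $C^1$), sum the regions using continuity of $u_x=\tfrac{1}{c}u_z$ across the jumps of $V$ to cancel interior traces, and finally integrate the Neumann condition by parts in $t$. The differences are presentational — you work in $(x,t)$-coordinates on vertical strips with a cutoff argument, whereas the paper works in $(z,t)$-coordinates and applies Green's theorem on a grid of rectangles $[a_k,a_{k+1}]\times[n,n+1]$ — but the key ideas (the factorized directional-derivative formulation, cancellation via continuity of $u_x$, and the boundary integrations by parts) coincide.
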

\begin{proof}
	Let $u$ be a $C^1$-solution to \eqref{eq:IP}. We have to show that
	\begin{align*}
		0 &= \int_0^\infty \int_0^\infty \left(V(x) u_t \varphi_t - u_x \varphi_x\right) \der x \der t
		+ \int_0^\infty f(u_t(0, t)) \varphi_t(0, t) \der t
		\\ &\quad+\int_{0}^{\infty} V(x) u_1(x) \varphi(x, 0) \der x
		+ f(u_1(0)) \varphi(0, 0)
	\end{align*}
	holds for all $\varphi \in C_c^\infty([0, \infty) \times [0, \infty))$. 
	
	Let $\Omega \subseteq [0, \infty) \times [0, \infty)$ be a Lipschitz domain such that $c$ is $C^1$ on $\Omega$. Denoting the outer normal at $\bd \Omega$ by $\nu$, we obtain
	\begin{align*}
		0 &= \int_{\Omega} \left[ (\partial_t - \partial_z) (u_t + u_z) + \frac{c_z}{c} u_z \right] \cdot \frac{1}{c} \varphi \der (z, t)
		\\ &= \int_{\bd \Omega} \frac{1}{c} (u_t + u_z) \varphi \cdot (\nu_2 - \nu_1) \der \sigma + 
		\int_\Omega \left(\frac{c_z}{c^2} u_z \varphi - (u_t + u_z) (\partial_t - \partial_z) \left[\frac{1}{c} \varphi \right]\right) \der (z, t)
		\\ &= \int_{\bd \Omega} \left(\frac{1}{c} u_t \varphi \nu_2 - \frac{1}{c} u_z \varphi \nu_1\right) \der \sigma + \int_{\Omega} \left(\frac{1}{c} u_z \varphi_z - \frac{1}{c} u_t \varphi_t\right) \der (z, t)
		\\ &\quad+\int_{\bd \Omega} \left(\frac{1}{c} u_z \varphi \nu_2 - \frac{1}{c} u_t \varphi \nu_1\right) \der \sigma + \int_{\Omega} \left(u_t \partial_z \left[\frac{1}{c} \varphi \right] - u_z \partial_t \left[\frac{1}{c} \varphi \right]\right) \der (z, t).
	\end{align*}
	We next show that the sum of the last two integrals equals zero. First, we calculate
	\begin{eqnarray*}
    \lefteqn{\int_{\bd \Omega} \left(\frac{1}{c} u_z \varphi \nu_2 - \frac{1}{c} u_t \varphi \nu_1\right) \der \sigma + \int_{\Omega} \left(u_t \partial_z \left[\frac{1}{c} \varphi \right] - u_z \partial_t \left[\frac{1}{c} \varphi \right]\right) \der (z, t)}
		\\ &=&   \int_{\bd \Omega} \left(\frac{1}{c} u_z \varphi \nu_2 - \frac{1}{c} u_t \varphi \nu_1 + u \partial_z \left[ \frac{1}{c} \varphi \right] \nu_2 - u \partial_t \left[ \frac{1}{c} \varphi \right] \nu_1\right) \der \sigma
		\\ &=& \int_{\bd \Omega} \left(\nu_2 \partial_z - \nu_1 \partial_t \right) \left[\frac{1}{c} u \varphi \right] \der \sigma.
	\end{eqnarray*}
	Let $\gamma \colon [0, l] \to \R$ be a positively oriented parametrization of $\bd \Omega$ by arc length. As $\nu$ is the outer normal at $\bd \Omega$, the identity $\gamma' = (\nu_2, - \nu_1)^\top$ holds. Hence,
	\begin{align*}
		\int_{\bd \Omega} \left(\nu_2 \partial_z - \nu_1 \partial_t \right) \left[\frac{1}{c} u \varphi \right] \der \sigma
		= \int_{\bd \Omega} \begin{pmatrix}\nu_2 \\ -\nu_1\end{pmatrix} \cdot \nabla \left[\frac{1}{c} u \varphi \right] \der \sigma
		= \int_{0}^{l} \gamma'(s) \cdot \nabla \left[\frac{1}{c} u \varphi \right](\gamma(s)) \der s
		= 0
	\end{align*}
	as $\gamma$ is closed. Thus we have shown
	\begin{align}\label{eq:loc:weak1}
		0 = \int_{\bd \Omega} \left(\frac{1}{c} u_t \varphi \nu_2 - \frac{1}{c} u_z \varphi \nu_1\right) \der \sigma + \int_{\Omega} \left(\frac{1}{c} u_z \varphi_z - \frac{1}{c} u_t \varphi_t\right) \der (z, t).
	\end{align}

	As in the proof of \cref{thm:conservation} we choose an increasing sequence $0 = a_1 < a_2 < a_3 < \dots$ with $a_k \to \infty$ as $k \to \infty$ such that $D(c) \cup D(c_z) \subseteq \set{a_k \colon k \in \N}$. We take $\Omega = [a_{k}, a_{k+1}] \times [n, n+1]$ in \eqref{eq:loc:weak1} and sum over $k \in \N$ and $n \in \N_0$. Using that boundary terms along common boundaries cancel out, the fact that $\varphi$ has compact support, and \eqref{eq:IP}, we obtain
	\begin{align*}
		0 &= \int_{\bd [0, \infty)^2} \left(\frac{1}{c} u_t \varphi \nu_2 - \frac{1}{c} u_z \varphi \nu_1\right) \der \sigma + \int_{[0, \infty)^2} \left(\frac{1}{c} u_z \varphi_z - \frac{1}{c} u_t \varphi_t\right) \der (z, t)
		\\ &= - \int_0^\infty \left[\frac{1}{c} u_t \varphi\right](z, 0) \der z+ \int_0^\infty \left[ \frac{1}{c} u_z \varphi \right](0, t) \der t + \int_0^\infty \int_0^\infty \left(\frac{1}{c} u_z \varphi_z - \frac{1}{c} u_t \varphi_t\right) \der z \der t
		\\ &= - \int_0^\infty V(x) u_t(x, 0) \varphi(x, 0) \der x + \int_0^\infty u_x(0, t) \varphi(0, t) \der t + \int_0^\infty \int_0^\infty \left(u_x \varphi_x - V(x) u_t \varphi_t\right) \der x \der t
		\\ &= - \int_0^\infty V(x) u_1(x) \varphi(x, 0) \der x + \int_0^\infty \left(f(u_t(0, t))\right)_t \varphi(0, t) \der t + \int_0^\infty \int_0^\infty \left(u_x \varphi_x - V(x) u_t \varphi_t\right) \der x \der t
		\\ &= - \int_0^\infty V(x) u_1(x) \varphi(x, 0) \der x - \int_0^\infty f(u_t(0, t)) \varphi_t(0, t) \der t - f(u_1(0)) \varphi(0, 0) 
		\\ &\qquad+ \int_0^\infty \int_0^\infty \left(u_x \varphi_x - V(x) u_t \varphi_t\right) \der x \der t
	\end{align*}
which finishes the proof.
\end{proof}

\section{Wellposedness} \label{sec:wellposedness}

The section completes the proof of the wellposedness claim stated in \cref{thm:main}. To be precise, \eqref{eq:IP} is wellposed in the following sense. The spaces $\spacext{[0,\infty)\times[0,T]}$, $\spacex{[0,\infty)}$, and $C([0,\infty))$ are endowed with uniform convergence on compact sets. 
\begin{proposition} \label{prop:wp}
	Assume that $u_0^{(n)}, u_1^{(n)}$ are initial data with $u_0^{(n)} \to u_0$ in $\spacex{[0, \infty)}$ and $u_1^{(n)} \to u_1$ in $C([0, \infty))$, and denote by $u^{(n)}$ and $u$ the solutions of \eqref{eq:IP_z} corresponding to these initial data. Then for any $T > 0$, we have $u^{(n)} \to u$ in $\spacext{[0, \infty) \times [0, T]}$.
\end{proposition}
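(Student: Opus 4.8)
The plan is to localise: by finite speed of propagation and the iteration used in the proof of \cref{thm:main}, I would reduce continuous dependence on $[0,\infty)\times[0,T]$ to continuous dependence on each of the three local triangle types (plain, jump, boundary) making up the solution. On plain and jump triangles this will be immediate because the solution operator is linear and bounded; the boundary triangle is the real obstacle, since there existence was obtained from Schauder rather than Banach.

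\textbf{Reduction to \eqref{ass:linear_growth}.} Fix $T>0$ and put $C\coloneqq F(u_1(0))+\tfrac12\int_0^{\kappa^{-1}(T)}(V(x)u_1^2+u_{0,x}^2)\der x$. Since $u_0^{(n)}\to u_0$ in $\spacex{[0,\infty)}$ and $u_1^{(n)}\to u_1$ in $C([0,\infty))$, the analogous constants $C_n$ converge to $C$, and $u_1^{(n)}(0)$ is bounded. I would pick $K>0$ so large that $F(K),F(-K)>C+1$ and $u_1^{(n)}(0)\in[-K,K]$ for all large $n$, and pass to the cut-off nonlinearity $f_K$ from \cref{sec:energy}, which satisfies \eqref{ass:linear_growth}. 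By \cref{lem:localized_energy} applied to $f_K$, the boundary velocities of the $f_K$-solutions with data $(u_0^{(n)},u_1^{(n)})$ and $(u_0,u_1)$ stay in $[-K,K]$ on $[0,T]$, hence these $f_K$-solutions coincide with the corresponding $f$-solutions there. Thus it suffices to prove the proposition assuming \eqref{ass:linear_growth}.

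\textbf{Reduction to single triangles.} With \eqref{ass:linear_growth} in force, it is enough to show $u^{(n)}\to u$ in $\spacext{[0,\infty)\times[0,\tilde h]}$ for the $\tilde h<h$ from Step~1 of the proof of \cref{thm:main}: then $u^{(n)}(\cdot,\tilde h)\to u(\cdot,\tilde h)$ in $\spacex{[0,\infty)}$ and $u^{(n)}_t(\cdot,\tilde h)\to u_t(\cdot,\tilde h)$ in $C([0,\infty))$, and restarting reaches $[0,T]$ after finitely many steps. A compact subset of the strip is covered by finitely many (relatively open) triangles of the collection $\calC$, so it is enough to prove $u^{(n)}|_{\tri}\to u|_{\tri}$ in $\spacext{\tri}$ for every plain, jump or boundary triangle $\tri$, using that on $B=P_z\tri$ we have $u_0^{(n)}|_B\to u_0|_B$ in $\spacex{B}$ and $u_1^{(n)}|_B\to u_1|_B$ in $C(B)$. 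For plain triangles (\cref{lem:wellposed:plain}) and jump triangles (\cref{lem:wellposed:jump}) the solution operator $\Phi$ is linear and bounded, so convergence of the data immediately yields convergence of the solutions.

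\textbf{The boundary triangle.} This is where I expect the main work. On a boundary triangle $\righttri$ the solution came from Schauder's theorem applied to $d=T_{u_0,u_1}(d)$ with $T_{u_0,u_1}(d)(t)=f(u_1(0))+\tfrac1{c(0)}\int_0^t\Phi_+(b_d,u_0,u_1)_z(0,\tau)\der\tau$ and $b_d(t)=u_0(0)+\int_0^t f^{-1}(d(\tau))\der\tau$; since $f^{-1}$ is only continuous, this map is not a contraction and Banach cannot be used, so continuous dependence is not automatic. Instead I would argue by compactness. Using \eqref{ass:linear_growth} and the boundedness of the converging data, all the fixed points $d^{(n)}$ lie in one Schauder set $\hat K\subseteq C([0,r])$, which is compact; hence $(d^{(n)})$ is precompact. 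The map $(d,u_0,u_1)\mapsto T_{u_0,u_1}(d)$ is jointly continuous: uniform convergence $d^{(n)}\to d^{*}$ together with uniform continuity of $f^{-1}$ on a compact interval gives convergence of the associated $b$-functions in $C^1([0,r])$, and then boundedness and linearity of $\Phi_+$ give $\Phi_+(b_{d^{(n)}},u_0^{(n)},u_1^{(n)})_z(0,\cdot)$ convergent uniformly. Hence every subsequential limit $d^{*}$ of $(d^{(n)})$ satisfies $d^{*}=T_{u_0,u_1}(d^{*})$, so $d^{*}=d$ by the uniqueness part of \cref{lem:wellposed:boundary}; therefore $d^{(n)}\to d$ in $C([0,r])$, the $b$-functions converge in $C^1([0,r])$, and by continuity of $\Phi_+$ we get $u^{(n)}=\Phi_+(b_{d^{(n)}},u_0^{(n)},u_1^{(n)})\to\Phi_+(b_d,u_0,u_1)=u$ in $\spacext{\righttri}$. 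Assembling the three triangle cases over the finite cover of each compact set of the strip, and then iterating over the strips, proves the proposition; the only genuine difficulty is this boundary step, where continuous dependence has to be recovered from compactness together with the already-established (and delicate) uniqueness.
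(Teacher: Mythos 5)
Your reduction to strips of height $\tilde h$, and then to the three triangle types, mirrors the paper; for plain and jump triangles you use the boundedness of the linear solution operators from \cref{lem:wellposed:plain,lem:wellposed:jump}, exactly as the paper does. The genuine difference is in the boundary triangle, where the paper does \emph{not} argue by compactness. Instead it proves a direct quantitative estimate: writing $d^{(n)}-d$, splitting $b^{(n)}-b$ as $\hat b + (b^{(n)}-b-\hat b)$ with $\hat b$ affine, and invoking \cref{lem:fine_boundary_estimate}, one multiplies by $\sign\bigl(d^{(n)}(t)-d(t)\bigr)$ and integrates. Since $f^{-1}$ is increasing, this sign aligns with the sign of $f^{-1}(d^{(n)})-f^{-1}(d)$, so the nonlinear term appears with a \emph{negative} sign and is dominated (up to data terms) by the remainder of size $\beta\bar r\int_0^t|f^{-1}(d^{(n)})-f^{-1}(d)|$; because $\bar r$ is shrunk so that $\beta\bar r<1$, the nonlinear contributions absorb each other and one obtains a linear bound $c(0)\,|d^{(n)}(t)-d(t)|\leq \tilde C\max\{\|u_0^{(n)}-u_0\|,\|u_1^{(n)}-u_1\|\}$ with no cutoff needed.

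Your compactness-and-uniqueness route is correct and is a legitimate alternative. Its advantages are that it is softer and conceptually standard (every subsequential limit of $d^{(n)}$ is a fixed point of the limiting $T$, hence equals $d$ by the uniqueness step of \cref{lem:wellposed:boundary}), and it reuses the Schauder set already at hand. Its drawbacks, relative to the paper's route, are twofold. First, you need the extra reduction to the cut-off nonlinearity $f_K$ satisfying \eqref{ass:linear_growth} solely to guarantee that the fixed points $d^{(n)}$ sit in one common compact Schauder set; the paper's sign/contraction estimate works directly for any increasing homeomorphism $f$ and does not pass through the energy argument at this stage. Second, your argument is purely qualitative (a sequential-convergence proof), whereas the paper's yields an explicit modulus-of-continuity statement for $d^{(n)}-d$ (Lipschitz in the data at the level of $d$, before $f^{-1}$ is applied), which is a strictly stronger intermediate conclusion even if the final statement — convergence in $\spacext{[0,\infty)\times[0,T]}$ — is the same.

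One small point to spell out if you carry your argument through: the subsequential limit $d^*$ inherits the correct initial value $d^*(0)=f(u_1(0))$ because $d^{(n)}(0)=f(u_1^{(n)}(0))\to f(u_1(0))$ by continuity of $f$; without this the uniqueness result cannot be invoked. Also note that the paper's choice of $\bar r$ is tailored precisely so that $\beta\bar r<1$; if one reuses the $\bar r$ from the existence proof (where only $q<1$ is imposed), the sign trick need not close, which is why the proposition's proof begins with that more restrictive choice of $\bar r$. Your approach sidesteps this calibration entirely.
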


\begin{proof}[Sketch of proof]
	We proceed similar to the proof of \cref{thm:main}. Choose some
	\begin{align*}
		0 < \bar r < \min\set{\left(5 - \sqrt{17}\right) \norm{\frac{c_z}{c}}_{\infty}^{-1}, \abs{z_1 - z_2} \colon z_1, z_2 \in D(c) \cup \set{0}, z_1 \neq z_2}.
	\end{align*}
	and let $\beta$ be as in \cref{lem:fine_boundary_estimate} with $r = \bar r$. The choice of $\bar r$ implies $\beta \bar r < \tfrac{4(5-\sqrt{17})}{(-3+\sqrt{17})(-1+\sqrt{17})} = 1$ as well as $q \coloneqq \bar r \norm{\frac{c_z}{c}}_\infty < 1$.

	Denote by $\calC$ the set containing all triangles $\tri$ that are of jump-type or plain-type and such that their base-radii $r$ are at most $\bar r$. Then by \cref{lem:wellposed:plain,lem:wellposed:jump}, there exists a constant $C > 0$ such that 
	\begin{align*}
		\norm{u^{(n)} - u}_{\spacext{\tri}} \leq C \max\set{\norm{u_0^{(n)} - u_0}_{\spacex{[0, \infty)}}, \norm{u_1^{(n)} - u_1}_{C([0, \infty))}}
	\end{align*}
	holds for each $\tri \in \calC$. 

	We also consider a single boundary-type triangle $\righttri$ with center $z_0 = 0$ and height $\bar r$. Writing $b(t) \coloneqq u(0, t)$, $b^{(n)}(t) \coloneqq u^{(n)}(0, t)$, $d(t) \coloneqq f(u_t(0, t))$ as well as $d^{(n)}(t) \coloneqq f(u^{(n)}_t(0, t))$, as in the proof of \cref{lem:wellposed:boundary} we obtain
	\begin{align*}
		d'(t) = \frac{1}{c(0)} \Phi_+(b, u_0, u_1)_z(0, t), \qquad
		\left(d^{(n)}\right)'(t) = \frac{1}{c(0)} \Phi_+(b^{(n)}, u_0^{(n)}, u_1^{(n)})_z(0, t).
	\end{align*}
	Setting $\hat b(t) \coloneqq u_0^{(n)}(0) - u_0(0) + t \left(u_1^{(n)}(0) - u_1(0)\right)$, we find
	\begin{align*}
		&c(0)\bigl(d'^{(n)}(t) - d'(t)\bigr) \\ 
		&\quad= \Phi_+(b^{(n)} - b, u_0^{(n)} - u_0, u_1^{(n)} - u_1)_z(0, t) \\
		&\quad= \Phi_+(\hat b, u_0^{(n)} - u_0, u_1^{(n)} - u_1)_z(0, t)+ \Phi_+(b^{(n)} - b - \hat b, 0, 0)_z(0, t) \\
		&\quad= \Phi_+(\hat b, u_0^{(n)} - u_0, u_1^{(n)} - u_1)_z(0, t)
		- \left[f^{-1}(d^{(n)}(t)) - f^{-1}(d(t)) - \left(u_1^{(n)}(0) - u_1(0)\right)\right] + \rho(n,t)
	\end{align*}	
    where \cref{lem:fine_boundary_estimate} gives
	\begin{align*}
		\abs{\rho(n,t)} \leq \beta \int_0^t \abs{f^{-1}(d^{(n)}(\tau)) - f^{-1}(d(\tau)) - u_1^{(n)}(0) + u_1(0)} \der \tau.	
	\end{align*}
	Multiplying with $\sign\left(d^{(n)}(t) - d(t)\right)$ and integrating, we obtain
	\begin{align*}
		&c(0)\abs{d^{(n)}(t) - d(t)} \\
		&\quad\leq c(0)\abs{d^{(n)}(0) - d(0)} \\
		&\qquad+ \int_0^t \left(\abs{\Phi_+(\hat b, u_0^{(n)} - u_0, u_1^{(n)} - u_1)_z(0, s)} - \abs{f^{-1}(d^{(n)}(s)) - f^{-1}(d(s))} + \abs{u_1^{(n)}(0) - u_1(0)}\right) \der s \\
		&\qquad+ \beta \int_0^t \int_0^s \abs{f^{-1}(d^{(n)}(\tau)) - f^{-1}(d(\tau)) - u_1^{(n)}(0) + u_1(0)} \der \tau \der s \\
		&\quad\leq \int_0^t \left(\abs{\Phi_+(\hat b, u_0^{(n)} - u_0, u_1^{(n)} - u_1)_z(0, s)} - \abs{f^{-1}(d^{(n)}(s)) - f^{-1}(d(s))} + \abs{u_1^{(n)}(0) - u_1(0)}\right) \der s \\
		&\qquad+ \beta \int_0^{\bar r} \int_0^t \left(\abs{f^{-1}(d^{(n)}(\tau)) - f^{-1}(d(\tau))} + \abs{u_1^{(n)}(0) - u_1(0)}\right) \der \tau \der s \\
		&\quad= \int_0^t \abs{\Phi_+(\hat b, u_0^{(n)} - u_0, u_1^{(n)} - u_1)_z(0, s)} \der s + (1 + \bar r \beta) t \abs{u_1^{(n)}(0) - u_1(0)} \\
		&\qquad- (1 - \bar r \beta) \int_0^t \abs{f^{-1}(d^{(n)}(s)) - f^{-1}(d(s))} \der s \\
		&\quad\leq \int_0^t \abs{\Phi_+(\hat b, u_0^{(n)} - u_0, u_1^{(n)} - u_1)_z(0, s)} \der s + (1 + \bar r \beta) t \abs{u_1^{(n)}(0) - u_1(0)} \\
		&\quad\leq\tilde C\left(\bar r, \norm{\frac{c_z}{c}}_\infty \right) \max\set{\norm{u_0^{(n)} - u_0}_{\spacex{[0, \infty)}}, \norm{u_1^{(n)} - u_1}_{C([0, \infty))}}.
	\end{align*}
	This shows the uniform convergence of $d^{(n)}$ to $d$ on $[0,\bar r]$ as $n\to\infty$. Since 
	$$
	b(t) = u_0(0) + \int_{0}^{t} f^{-1}(d(\tau)) \der \tau, \qquad b^{(n)}(t) = u_0(0) + \int_{0}^{t} f^{-1}(d^{(n)}(\tau)) \der \tau
	$$
	for $t \in [0, \bar r]$, it follows that $b^{(n)} \to b$ in $C^1([0, \bar r])$ as $n \to \infty$, and therefore we see that $u^{(n)} = \Phi_+(b^{(n)}, u_0^{(n)}, u_1^{(n)}) \to \Phi_+(b, u_0, u_1) = u$ in $C^1(\righttri)$.
	
	Combined, we find that that $u^{(n)} \to u$ in $\spacext{\mathcal{D}}$ where $\mathcal{D} \coloneqq \cup_{\tri \in \mathcal{C}} \tri$. Note that $[0, \infty) \times [0, \frac{\bar r}{2}] \subseteq \mathcal{D}$, so in particular $u^{(n)} \to u$ in $\spacext{[0, \infty) \times [0, \frac{\bar r}{2}]}$.
	Applying this result repeatedly $k$ times, we see that $u^{(n)} \to u$ in $\spacext{[0, \infty) \times [0, k \frac{\bar r}{2}]}$ where $k\in \N$ is chosen such that $k \frac{\bar r}{2} \geq T$.
\end{proof}

\section{Breather solutions and their regularity} \label{sec:regularity}

One can also consider \eqref{eq:IP} in the context of breather solutions, where a \emph{breather} is a time-periodic and spatially localized function. With time-period denoted by $T$, the time domain becomes the torus $\T \coloneqq \R /_T$ and after dropping the initial data, \eqref{eq:IP} reads 
\begin{align}\label{eq:wave}
	\begin{cases}
		V(x) u_{tt}(x, t) - u_{xx}(x, t) = 0, &x \in [0, \infty), t \in \T, \\
		u_x(0, t) = (f(u_t(0, t)))_t, &t \in \T.
	\end{cases}
\end{align}
In \cite{kohler_reichel} the case of a cubic boundary term $f(y) = \tfrac12 \gamma y^3$ ($\gamma \in \R \setminus \set{0}$) and a $2 \pi$-periodic step potential $V \colon \R \to \R$ given by
\begin{align}\label{ass:periodic_potential}\tag{A5}
	V(x) = \begin{cases}
		a, & \abs{x} < \pi \theta, \\
		b, & \theta \pi < \abs{x} < \pi,
	\end{cases}
\end{align}
where $b > a > 0$ and $\theta \in (0, 1)$ was discussed. It was shown that if $V$ satisfies
\begin{align}\label{ass:periodic_potential_cond}\tag{A6}
	4 \sqrt{a} \theta \omega \in 2 \N_0 + 1
	\quad\text{and}\quad
	4 \sqrt{b} \left(1 - \theta\right) \omega \in 2 \N_0 + 1,
\end{align}
where $\omega \coloneqq \frac{2 \pi}{T}$ is the frequency, then there exist infinitely many weak breather solutions $u$ of \eqref{eq:wave} with time-period $T$. A weak solution of \eqref{eq:wave} is defined next.

\begin{definition}\label{def:weak_periodic_solution}
	Let $f:\R\to\R$ be an increasing, odd homeomorphism. A \emph{weak solution} of \eqref{eq:wave} is a function $u \in H^1([0, \infty) \times \T)$ with $u(0,\cdot)\in W^{1,1}(\T)$ and $f(u_t(0, \cdot)) \in L^1(\T)$ which satisfies 
	\begin{align*}
		\int_{[0, \infty) \times \T} -V(x) u_t \varphi_t + u_x \varphi_x \der (x, t) - \int_\T f(u_t(0, t)) \varphi_t(0, t) \der t = 0
	\end{align*}
	for all test functions $\varphi \in C_c^\infty([0, \infty) \times \T)$.
\end{definition}

\begin{remark} 
	We require that the trace $u(0,\cdot)$ of $u$ at $x=0$ has an integrable weak first-order time derivative in order to give a pointwise meaning to $u_t(0,t)$ and, in particular, to define $f(u_t(0,t))$ pointwise almost everywhere. 
\end{remark}

In the setting of \cite{kohler_reichel} where $f(y) = \tfrac12 \gamma y^3$, one requires $u_t(0, t) \in L^3(\T)$ and
\begin{align*}
	2 \int_{[0, \infty) \times \T} -V(x) u_t \varphi_t + u_x \varphi_x \der (x, t) - \gamma \int_\T u_t(0, t)^3 \varphi_t(0, t) \der t = 0.
\end{align*}

In \cite[Theorem 4]{kohler_reichel} it was furthermore shown that weak solutions to \eqref{eq:wave} constructed in \cite{kohler_reichel} lie in $H^{\tfrac54 - \varepsilon}(\T, L^2(0, \infty)) \cap H^{\tfrac14 - \varepsilon}(\T, H^1(0, \infty))$ for $\varepsilon > 0$. 
Here, the Bochner spaces $H^{s}(\T, X)$ are defined by
\begin{align*}
	\norm{u}_{H^s(\T, X)}^2 \coloneqq \sum_{k \in \Z} (1 + k^2)^s \norm{\hat u_k}_X^2.
\end{align*}

In this section, we will show the following improved regularity result for breather solutions of \eqref{eq:wave}:

\begin{theorem}\label{thm:regularity}
	Assume \eqref{ass:nonlinearity}, \eqref{ass:periodic_potential}, \eqref{ass:periodic_potential_cond} that $f^{-1}$ is $r$-Hölder continuous with $r \in (0, 1)$ and that $u$ is a weak solution to \eqref{eq:wave}. Then $u$ is $\frac{T}{2}$-antiperiodic, lies in $C^{1,r}([0, \infty) \times \T)$ and is a $C^1$-solution to \eqref{eq:IP} with its own initial data, i.e. $u_0(x) = u(x, 0)$ and $u_1(x) = u_t(x, 0)$. In addition, there exists $C > 0$ such that $\abs{u(x, t)} \leq C \ee^{- \rho x}$ where $\rho \coloneqq \frac{\log(b) - \log(a)}{4 \pi}$.
\end{theorem}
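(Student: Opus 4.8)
The plan is to decompose $u$ into time-Fourier modes, exploit the resonance condition \eqref{ass:periodic_potential_cond} to pin down the spatial ODE for each mode via Floquet/transfer-matrix theory, and then bootstrap the boundary relation, which will turn out to be a one-derivative-smoothing Fourier multiplier equation whose only loss of regularity comes from composition with $f^{-1}$.

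\emph{Floquet analysis, antiperiodicity, decay.} Write $u(x,t)=\sum_{k\in\Z}\hat u_k(x)\ee^{\ii k\omega t}$. Since $u\in H^1([0,\infty)\times\T)$, each $\hat u_k$ lies in $H^1(0,\infty)$ and, from the weak formulation, is a square-integrable solution of $\hat u_k''+k^2\omega^2 V(x)\hat u_k=0$ with $\hat u_k,\hat u_k'$ continuous across the interfaces of $V$ (in $x$-coordinates the matching is continuity of $u$ and $u_x$; the equation forces $\hat u_k\in W^{2,\infty}_{\mathrm{loc}}$). The potential $V$ is $2\pi$-periodic with an $a$-slab of width $2\pi\theta$ and a $b$-slab of width $2\pi(1-\theta)$, so I would compute the monodromy $M_k=T_a(\pi\theta)\,T_b(2\pi(1-\theta))\,T_a(\pi\theta)$ in the variables $(\hat u_k,\hat u_k')$. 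The accumulated phases are $\sqrt a\,k\omega\,\pi\theta=\tfrac{\pi k(2m+1)}{4}$ and $\sqrt b\,k\omega\,2\pi(1-\theta)=\tfrac{\pi k(2n+1)}{2}$, where $4\sqrt a\theta\omega=2m+1$ and $4\sqrt b(1-\theta)\omega=2n+1$. For even $k$ the transfer across the $b$-slab and across the full $a$-slab of width $2\pi\theta$ are each $\pm I$, so $M_k=I$ (and $\hat u_0''=0$); in either case every solution of the ODE is $2\pi$-periodic or affine, hence not in $L^2(0,\infty)$ unless zero, so $\hat u_k\equiv0$ for all even $k$ — which gives $u(x,t+\tfrac T2)=\sum_{k\text{ odd}}(-1)^k\hat u_k(x)\ee^{\ii k\omega t}=-u(x,t)$, the $\tfrac T2$-antiperiodicity. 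For odd $k$ the phases are odd multiples of $\tfrac\pi4$ and $\tfrac\pi2$, so $|\cos|=|\sin|=\tfrac1{\sqrt2}$ in the $a$-slab and $\cos=0$ in the $b$-slab; conjugating the transfer matrices by $\operatorname{diag}(1,(k\omega)^{-1})$ makes them depend only on $k$ mod $8$, and one gets $\tr M_k=\pm(\sqrt{a/b}+\sqrt{b/a})$ with eigenvalues of modulus $\sqrt{a/b}$ and $\sqrt{b/a}$. Square-integrability then forces $\hat u_k=c_k\psi_k$, $c_k=\hat u_k(0)$, $\psi_k$ the decaying Floquet solution normalized by $\psi_k(0)=1$; the eigenvalue $\sqrt{a/b}$ over distance $2\pi$ yields $|\psi_k(x)|\le C\ee^{-\rho x}$ uniformly in $k$ with $\rho=\tfrac{\log b-\log a}{4\pi}$, and the same conjugation shows $\psi_k'(0)=\sqrt a\,k\omega\,\sigma_k$ for a bounded, $8$-periodic sequence $\sigma_k$.

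\emph{Boundary multiplier and Hölder bootstrap.} Put $v\coloneqq u_t(0,\cdot)$, which is mean-zero since $u(0,\cdot)$ is $T$-periodic, and $d\coloneqq f(v)$. On the Fourier side the boundary condition $u_x(0,\cdot)=(f(u_t(0,\cdot)))_t$ reads $\widehat{d'}_k=c_k\psi_k'(0)$, i.e.\ $d'=\sqrt a\,\mathcal H v$ where $\mathcal H$ is the order-zero Fourier multiplier whose symbol is $8$-periodic in $k$ (proportional to $\sigma_k$). Being periodic in $k$, $\mathcal H$ is a finite combination of time-translations, hence bounded on $L^p(\T)$, $H^s(\T)$ and $C^{0,\alpha}(\T)$; thus $\mathcal G\coloneqq\partial_t^{-1}\mathcal H$ gains one derivative ($H^s\to H^{s+1}$, $C^{0,\alpha}\to C^{1,\alpha}$) and $d=\sqrt a\,\mathcal G v+\mathrm{const}$. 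Starting from $v\in L^2(\T)$ — which follows from $\sum_k k^2|c_k|^2<\infty$, using $\|\psi_k\|_{L^2(0,\infty)}\asymp1$ and $u\in H^1$ — I would iterate: $v\in L^2\Rightarrow d\in H^1\hookrightarrow C^{0,1/2}\Rightarrow v=f^{-1}(d)\in C^{0,r/2}\Rightarrow d\in C^{1,r/2}\subset C^{0,1}\Rightarrow v\in C^{0,r}\Rightarrow d\in C^{1,r}$, which stabilizes with $u_t(0,\cdot)\in C^{0,r}(\T)$, $u_x(0,\cdot)=d'\in C^{0,r}(\T)$, $u(0,\cdot)\in C^{1,r}(\T)$.

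\emph{Propagation into the interior and conclusions.} Off the interfaces $V$ is constant, so in each slab $u$ solves the free wave equation and is of the form $u(x,t)=P(t+\sqrt V x)+Q(t-\sqrt V x)$. The sideways Cauchy data $(u,u_x)(0,\cdot)\in C^{1,r}\times C^{0,r}$ determine $P',Q'\in C^{0,r}$ on the first slab, and the interface matching (continuity of $u$ and $u_x$, linear in the d'Alembert profiles) propagates $C^{1,r}$-regularity slab by slab; by the Floquet uniqueness above this solution coincides with $u$, so $u\in C^{1,r}([0,\infty)\times\T)$. Consequently $u$ is a $C^1$-solution of \eqref{eq:IP} with $u_0=u(\cdot,0)$, $u_1=u_t(\cdot,0)$: in the interior \cref{def:solution:x}(i) reduces to the free wave equation since $c'=0$ off the interfaces, and (ii) holds classically because now $f(u_t(0,\cdot))\in C^1(\T)$. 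Finally $|u(x,t)|\le\sum_k|c_k|\,|\psi_k(x)|\le C\ee^{-\rho x}\sum_k|c_k|$ with $\sum_k|c_k|<\infty$ since $|c_k|=|\widehat v_k|/(k\omega)\lesssim|k|^{-1-r}$ from $v\in C^{0,r}(\T)$.

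\emph{Main obstacle.} The crux is the multiplier step: recognizing that, after the Floquet reduction of the linear part, the boundary condition is exactly the one-derivative-smoothing equation $d=\sqrt a\,\mathcal G v+\mathrm{const}$ in which regularity is lost only through $f^{-1}$, and checking that the Hölder bootstrap closes precisely at exponent $r$ (and that the $k$-periodic symbol $\sigma_k$ makes $\mathcal H$ a genuine multiplier on Hölder and Sobolev spaces). The transfer-matrix computations of the first step are finite but delicate bookkeeping; their whole point is that \eqref{ass:periodic_potential_cond} forces the per-period phases onto the lattice $\tfrac\pi4\Z$, collapsing the monodromy to $I$ on even modes and to a fixed (mod $8$) hyperbolic matrix on odd modes.
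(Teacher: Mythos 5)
Your proposal follows the same overall architecture as the paper's proof: a time-Fourier decomposition and Floquet analysis that kills the even modes (hence $\tfrac T2$-antiperiodicity) and produces, for odd $k$, a decaying Bloch mode $\phi_k$ with $\phi_k(0)=1$ and $\phi_k'(0)\propto k$; the observation that the boundary relation becomes a $k$-periodic Fourier multiplier, hence a finite combination of time-translations (the paper realizes it as convolution with the two-atom measure proportional to $\delta_{T/4}-\delta_{-T/4}$); a regularity bootstrap exploiting $r$-Hölder continuity of $f^{-1}$ together with the one-derivative gain from $\partial_t^{-1}$; and d'Alembert slab-by-slab propagation into the interior. The one genuine deviation is how the bootstrap is seeded and iterated. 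You start from $v=u_t(0,\cdot)\in L^2(\T)$, which you obtain from $u\in H^1$ via the assertion $\|\psi_k\|_{L^2(0,\infty)}\asymp 1$ uniformly in $k$, and then close in three rounds through Hölder spaces. The paper instead starts from the weaker information $\beta=f(u_t(0,\cdot))\in L^1(\T)$, which the weak-solution definition hands over for free, and bootstraps through Sobolev--Slobodeckij spaces $W^{s,p}(\T)$ with $p\to\infty$, using the lemma that composition with $f^{-1}$ maps $W^{s,p}$ into $W^{rs,p/r}$; this takes an $n$-dependent number of iterations before embedding into $C^1$ and then one more round to reach $C^{1,r}$, but it needs no quantitative Bloch-mode estimate. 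Your bound $\|\psi_k\|_{L^2}\gtrsim 1$ is in fact correct (on the first $a$-slab $\psi_k(x)=\cos(\sqrt a\,k\omega x)+\sigma_k\sin(\sqrt a\,k\omega x)$ with $\sigma_k$ bounded, so the $L^2$ norm over any fixed subinterval is bounded below uniformly in $k$, including small $k$), but it is asserted rather than proved and would need a separate verification; the paper's $L^1$ starting point avoids this step entirely, at the cost of a longer iteration.
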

Note that in the setting of \cite{kohler_reichel}, the assumptions of \cref{thm:regularity} are satisfied with $r = \tfrac13$. In the following, we are going to prove \cref{thm:regularity} and we will always assume the assumptions of \cref{thm:regularity}.

\subsection{Fourier decomposition of $V(x) \partial_t^2 - \partial_x^2$}

We denote by $e_k(t) \coloneqq \frac{1}{\sqrt{T}} \ee^{\ii k \omega t}$ the orthonormal Fourier base of $L^2(\T)$ and decompose $u$ in its Fourier series with respect to $t$:
\begin{align*}
	u(x, t) &= \sum_{k \in \Z} \hat u_k(x) e_k(t) \eqqcolon \ft^{-1}(\hat u)
\intertext{with}
	\hat u_k(x) &\coloneqq \ft_k(u) \coloneqq \int_{\T} u(x, t) \overline{e_k(t)} \der t.
\end{align*}
Writing $L \coloneqq V(x) \partial_t^2 - \partial_x^2$ and $L_k \coloneqq - \partial_x^2 - k^2 \omega^2 V(x)$, we see that any solution $u$ of \eqref{eq:wave} satisfies
\begin{align*}
	0 = L u
\end{align*}
and therefore also
\begin{align}\label{eq:linear_decomposed}
	0 = \ft_k L u = L_k \ft_k u = L_k \hat u_k
\end{align}
for all $k\in \Z$. Since
\begin{align*}
	\norm{u}_{L^2([0, \infty) \times \T)}^2 + \norm{u_x}_{L^2([0, \infty) \times \T)}^2 
	= \sum_{k \in \Z} \norm{\hat u_k}_{L^2(0, \infty)}^2 + \norm{(\hat u_k)_x}_{L^2(0, \infty)}^2,
\end{align*}
each $\hat u_k$ is an $H^1((0, \infty), \C)$-solution of \eqref{eq:linear_decomposed}. 
As $V$ (and therefore also $L_k$) is given explicitly, we can characterize the space of solutions of \eqref{eq:linear_decomposed} as follows. 

\begin{proposition}\label{prop:fundamental_mode}
	If $k \in \Z$ is even, then the only solution $\hat u_k \in H^1((0, \infty), \C)$ to \eqref{eq:linear_decomposed} is $\hat u_k = 0$. If $k$ is odd, there exists a fundamental Bloch mode $\phi_k \in H^2((0,\infty), \R)$ such that a function $\hat u_k \in H^1((0, \infty), \C)$ solves \eqref{eq:linear_decomposed} if and only if $\hat u_k = \lambda \phi_k$ for some $\lambda \in \C$.
	Furthermore, $\phi_k$ satisfies
	\begin{align*}
		\phi_k(0) = 1, \quad
		\phi_k'(0) = C k (-1)^{(k-1)/2}, \quad
		\phi_k(x + 4 \pi) = \frac{a}{b} \phi_k(x)
	\end{align*}
	for $x > 0$, where  $C = C(T, a) \in \R$ is a constant independent of $k$.
\end{proposition}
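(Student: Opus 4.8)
The plan is to reduce \eqref{eq:linear_decomposed} to a scalar Hill-type ODE with piecewise constant coefficients and to analyse it by Floquet/transfer-matrix arguments, using the resonance condition \eqref{ass:periodic_potential_cond} to make the relevant transfer matrices completely explicit. Since $V\in L^\infty$ and $\hat u_k\in H^1((0,\infty),\C)$ solves $-\hat u_k''=k^2\omega^2V(x)\hat u_k$ weakly, a one-step bootstrap gives $\hat u_k\in H^2_{\mathrm{loc}}$, so $\hat u_k,\hat u_k'\in C([0,\infty))$ and on every interval of constancy of $V$ the function $\hat u_k$ solves a constant-coefficient equation classically; hence the solution set on $(0,\infty)$ is a two-dimensional space, the pieces being glued across the isolated jumps of $V$ by the $C^1$-matching contained in the equation. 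On $[0,\infty)$ the potential consists of an initial half $a$-band $[0,\pi\theta]$ followed by $2\pi$-periodic cells, each cell being a $b$-band of length $2\pi(1-\theta)$ followed by an $a$-band of length $2\pi\theta$. Recording the state $(\hat u_k,\hat u_k')$, propagation across a band of length $\ell$ on which $V\equiv v$ is a matrix $M_v(\ell)$ with entries built from $\cos(\sqrt v\,\abs{k}\omega\,\ell)$ and $\sin(\sqrt v\,\abs{k}\omega\,\ell)$; as $\hat u_k$ depends only on $k^2$ we may take $k\ge 0$ and set $\phi_{-k}:=\phi_k$.

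The heart of the matter is the cell transfer matrix $T_k:=M_a(2\pi\theta)\,M_b(2\pi(1-\theta))$. Writing $4\sqrt a\,\theta\omega=2m+1$ and $4\sqrt b\,(1-\theta)\omega=2m'+1$ from \eqref{ass:periodic_potential_cond}, we have $\sqrt a\,\abs{k}\omega\cdot 2\pi\theta=\tfrac\pi2\,\abs{k}(2m+1)$ and $\sqrt b\,\abs{k}\omega\cdot 2\pi(1-\theta)=\tfrac\pi2\,\abs{k}(2m'+1)$. For even $k$ these are integer multiples of $\pi$, so $M_a(2\pi\theta)=\pm I$, $M_b(2\pi(1-\theta))=\pm I$, hence $T_k=\pm I$; then any nontrivial solution satisfies $\abs{\hat u_k(\impvar+2\pi)}=\abs{\hat u_k(\impvar)}$ on $[\pi\theta,\infty)$ and cannot lie in $L^2(0,\infty)$, forcing $\hat u_k=0$ (the case $k=0$, where the equation reads $\hat u_0''=0$, is immediate). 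For odd $k$ the two quantities are odd multiples of $\pi/2$, so $M_a(2\pi\theta)$ and $M_b(2\pi(1-\theta))$ are antidiagonal and a short multiplication gives
\[
T_k=\varepsilon\begin{pmatrix}\sqrt{b/a}&0\\0&\sqrt{a/b}\end{pmatrix},\qquad
\varepsilon=-\sin\!\big(\tfrac\pi2 k(2m+1)\big)\sin\!\big(\tfrac\pi2 k(2m'+1)\big),
\]
and because $k$ is odd the product of the two sines equals $(-1)^{m+m'}$, so $\varepsilon\in\{\pm1\}$ is independent of $k$.

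For odd $k$ we conclude by Floquet theory. Since $b>a$, the eigenvalue $\varepsilon\sqrt{a/b}$ of $T_k$ has modulus $<1$ with eigenvector $(0,1)^\top$, and the other, of modulus $>1$, has eigenvector $(1,0)^\top$. Comparing $\norm{\hat u_k}_{L^2(\pi\theta+2\pi n,\,\pi\theta+2\pi(n+1))}$ with $\abs{(\hat u_k,\hat u_k')(\pi\theta+2\pi n)}$ shows that a solution lies in $L^2(0,\infty)$ — equivalently in $H^1$, equivalently in $H^2$, since then $\hat u_k''=-k^2\omega^2V\hat u_k$ also decays geometrically — exactly when $\hat u_k(\pi\theta)=0$, a one-dimensional condition over $\C$. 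Solving the equation on $[0,\pi\theta]$ with $\hat u_k(\pi\theta)=0$ shows such a solution does not vanish at $x=0$ (because $\sqrt a\,\abs{k}\omega\,\pi\theta$ is an odd multiple of $\pi/4$, whence $\sin(\sqrt a\,\abs{k}\omega\,\pi\theta)=\pm\tfrac1{\sqrt2}\ne 0$), so it may be normalised by $\phi_k(0)=1$; this gives the uniqueness assertion, $\phi_k$ being real because $T_k$ is and lying in $H^2$ by geometric decay of $\phi_k,\phi_k',\phi_k''$. On $[0,\pi\theta]$ one then reads off $\phi_k'(0)=-\sqrt a\,\abs{k}\omega\,\cot(\sqrt a\,\abs{k}\omega\,\pi\theta)=-\sqrt a\,\abs{k}\omega\,(-1)^{(\abs{k}(2m+1)-1)/2}$, and the elementary identity $\tfrac{\abs{k}(2m+1)-1}{2}=\tfrac{\abs{k}-1}{2}+m\abs{k}$ together with $k$ odd rewrites this as $C\,k\,(-1)^{(k-1)/2}$ with $C=-\sqrt a\,\omega\,(-1)^m$ independent of $k$. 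Finally, $x\mapsto\phi_k(x+2\pi)$ is again a decaying $H^1$-solution on $(0,\infty)$ (as $V$ is $2\pi$-periodic), hence a scalar multiple of $\phi_k$; evaluating on one cell the scalar is $\varepsilon\sqrt{a/b}$, so iterating twice gives $\phi_k(x+4\pi)=\varepsilon^2\tfrac ab\,\phi_k(x)=\tfrac ab\,\phi_k(x)$.

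The main obstacle is the explicit transfer-matrix computation under \eqref{ass:periodic_potential_cond} and the attendant sign bookkeeping: obtaining $T_k$ exactly diagonal, checking that $\varepsilon$ and $C$ are genuinely $k$-independent, and treating the initial half-band $[0,\pi\theta]$ correctly since it is not a full period. The remaining ingredients — the regularity bootstrap and the $L^2$-versus-exponential-growth dichotomy of Floquet theory — are routine.
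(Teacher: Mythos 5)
Your proof is correct and follows the same transfer-matrix/Floquet approach that the paper's one-line sketch for even $k$ uses and that the cited reference \cite[Appendix A2]{kohler_reichel} uses for odd $k$; the resonance condition \eqref{ass:periodic_potential_cond} makes the band transfer matrices scalar (even $k$) or antidiagonal (odd $k$), exactly as you exploit. You have reconstructed a complete, self-contained argument for all the assertions — the vanishing of even modes, the one-dimensionality and realness of the decaying mode, the exact values $\phi_k(0)=1$, $\phi_k'(0)=Ck(-1)^{(k-1)/2}$, the $k$-independence of $C$, the $H^2$-regularity, and the $4\pi$-quasiperiodicity — whereas the paper defers all of this to the reference.
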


A proof of \cref{prop:fundamental_mode} for $k$ odd can be found in \cite[Appendix A2]{kohler_reichel}. The nonexistence result for even $k$ can be obtained using similar arguments:
For $k \neq 0$ the monodromy matrix for $L_k$ is the identity matrix so that \eqref{eq:linear_decomposed} only has spatially periodic solutions. For $k=0$, the solutions of \eqref{eq:linear_decomposed} are affine.

\subsection{Bootstrapping argument}

Assume that $u$ is a weak solution to \eqref{eq:wave} in the sense of Definition~\ref{def:weak_periodic_solution}. By \cref{prop:fundamental_mode}, all even Fourier modes of $u$ vanish so that there exists a complex sequence $\hat \alpha_k$ such that
\begin{align}\label{eq:solution_fourier_series}
	u(x, t) = \sum_{k \in \Zodd} \hat \alpha_k \phi_k(x) e_k(t).
\end{align}
where $\Zodd \coloneqq 2 \Z + 1$. In particular, $u$ is $\frac{T}{2}$-antiperiodic.
Choosing $x = 0$ in \eqref{eq:solution_fourier_series}, we find $u(0, t) = \sum_{k \in \Zodd} \hat \alpha_k e_k(t) \eqqcolon \alpha(t)$.
As $\beta \coloneqq f(u_t(0, \impvar)) \in L^1(\T)$, we can define its Fourier coefficients $\hat \beta_k \coloneqq \ft_k(\beta)$.
The functions $\alpha$ and $\beta$ are related in two ways, which we will exploit to construct a bootstrapping argument. 

Firstly, we have
\begin{align*}
	\alpha'(t) = u_t(0, t) = f^{-1}(f(u_t(0, t))) = f^{-1}(\beta(t)).
\end{align*}
We can apply $\partial_t^{-1}$ to both sides and obtain 
\begin{align}\label{eq:regularity_beta_to_alpha}
	\alpha = \partial_t^{-1} f^{-1}(\beta)
\end{align}
Here $\partial_t^{-1} g \coloneqq \ft^{-1}\bigl((\frac{1}{\ii k \omega}\hat g_k)_{k\in \Zodd}\bigr)$ for a $\frac{T}{2}$-antiperiodic function $g \in L^1(\T)$. Secondly, by using \cref{def:weak_periodic_solution} with $\varphi(x, t) = \psi(x) \overline{e_k(t)}$ for $k \in \Zodd$, where $\psi \in C_c^\infty([0, \infty))$ and $\psi(0) = 1$, we obtain 
\begin{align*}
	0&= \int_{[0, \infty) \times \T} \left[-V(x) u_t \psi(x) \overline{e_k'(t)} + u_x \psi'(x) \overline{e_k(t)}\right] \der (x, t) - \int_\T f(u_t(0, t)) \psi(0) \overline{e_k'(t)} \der t \\
	&\quad= \int_{0}^{\infty} \left[- V(x) \ii k \omega \hat\alpha_k \phi_k(x) \overline{\ii k \omega} \psi(x) + \hat\alpha_k \phi_k'(x) \psi'(x)\right] \der x + \ii k \omega \hat\beta_k \\
	&\quad= \int_{0}^{\infty} \left[- \hat\alpha_k k^2 \omega^2 V(x) \phi_k(x) \psi(x) - \hat\alpha_k \phi_k''(x) \psi(x)\right] \der x - \hat\alpha_k \phi_k'(0) \psi(0) + \ii k \omega \hat\beta_k \\
	&\quad= - \phi_k'(0) \hat\alpha_k + \ii k \omega \hat\beta_k,
\end{align*}
or
\begin{align} \label{eq:regularity_alpha_to_beta}
	\hat \beta_k = \frac{\phi_k'(0)}{\ii k \omega} \hat \alpha_k.
\end{align}
Since $u(0,\cdot)$ is $\frac{T}{2}$-antiperiodic, the even Fourier coefficients of $\alpha=u(0,\cdot)$ vanish, and since $f$ is odd the even Fourier coefficients of $\beta=f(u_t(0,\cdot))$ also vanish.

We next investigate the properties of the maps defined by \eqref{eq:regularity_beta_to_alpha} and \eqref{eq:regularity_alpha_to_beta}, which we consider as maps between the fractional Sobolev-Slobodeckij spaces $W^{s, p}(\T)$. The definition and all employed properties of the spaces $W^{s, p}(\T)$ can be found in \cref{sec:sobolevSpaces}. In the following we use the suffix ``anti'' to denote that the space consists of functions which are $\frac{T}{2}$-antiperiodic in time.

\begin{lemma}\label{lem:regularity_beta_to_alpha}
	The map 
	\begin{align*}
		\beta \mapsto \partial_t^{-1} f^{-1}(\beta)
	\end{align*}
	is well-defined from $W^{s, p}_\mathrm{anti}(\T)$ to $W^{1 + rs, p/r}_\mathrm{anti}(\T)$ for any $s \in [0, 1)$ and $p \in [1, \infty)$ as well as from $C^{0,s}_\mathrm{anti}(\T)$ to $C^{1,r s}_\mathrm{anti}(\T)$ for any $s \in [0, 1]$.
\end{lemma}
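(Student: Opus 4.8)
The plan is to factor the map as $\beta \longmapsto f^{-1}\circ\beta \longmapsto \partial_t^{-1}(f^{-1}\circ\beta)$ and treat the two factors separately. Since $f$ is an \emph{odd} increasing homeomorphism under the assumptions of \cref{thm:regularity}, so is $f^{-1}$; in particular $f^{-1}(0)=0$ and $f^{-1}(-\sigma)=-f^{-1}(\sigma)$, so composition with $f^{-1}$ sends $\frac T2$-antiperiodic functions to $\frac T2$-antiperiodic functions, and $\partial_t^{-1}$ does so as well by its definition via odd Fourier frequencies. Hence the ``anti'' qualifier on domain and target is automatic and only the regularity bookkeeping remains. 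I would in fact prove boundedness of the map (with constants depending only on the Hölder constant of $f^{-1}$ and on $s,p$), since this quantitative version is what the subsequent bootstrap needs.

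For the composition step, fix $s\in[0,1)$, $p\in[1,\infty)$, $\beta\in W^{s,p}(\T)$ and set $g\coloneqq f^{-1}\circ\beta$. Writing $L$ for the $r$-Hölder constant of $f^{-1}$, the pointwise bound $|f^{-1}(\sigma)-f^{-1}(\tau)|\le L|\sigma-\tau|^r$ gives $|g(x)|\le L|\beta(x)|^{r}$, so $g\in L^{p/r}(\T)$ with $\|g\|_{L^{p/r}}\le L\|\beta\|_{L^p}^{r}$ (here boundedness of constants on $\T$, together with $f^{-1}(0)=0$, takes care of membership). The key observation is the exponent identity $1+(rs)(p/r)=1+sp$, which lets one estimate the Gagliardo seminorm directly:
$$[g]_{W^{rs,\,p/r}}^{p/r}=\int_{\T}\int_{\T}\frac{|g(x)-g(y)|^{p/r}}{|x-y|^{1+sp}}\,dx\,dy\le L^{p/r}\int_{\T}\int_{\T}\frac{|\beta(x)-\beta(y)|^{p}}{|x-y|^{1+sp}}\,dx\,dy=L^{p/r}[\beta]_{W^{s,p}}^{p}.$$
Thus $g\in W^{rs,\,p/r}(\T)$. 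The same pointwise bound gives $|g(x)-g(y)|\le L\,[\beta]_{C^{0,s}}^{r}|x-y|^{rs}$, so $\beta\in C^{0,s}_{\mathrm{anti}}(\T)$ implies $g\in C^{0,rs}_{\mathrm{anti}}(\T)$; note $rs\in[0,1)$ because $r<1$ and $s\le1$, so no endpoint issues arise in either scale.

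For the second step, $\partial_t^{-1}$ acts on functions supported on odd Fourier frequencies, which have vanishing mean, so $\partial_t^{-1}g$ is the unique $\frac T2$-antiperiodic antiderivative of $g$. By the properties of the spaces $W^{s,p}(\T)$ collected in \cref{sec:sobolevSpaces} — in particular that $W^{1+\sigma,q}(\T)=\{h\in W^{1,q}(\T):h'\in W^{\sigma,q}(\T)\}$ and the Poincar\'e–Wirtinger inequality for mean-zero functions — the antiderivative maps $W^{\sigma,q}_{\mathrm{anti}}(\T)$ boundedly into $W^{1+\sigma,q}_{\mathrm{anti}}(\T)$; on the Hölder side it maps $C^{0,\sigma}_{\mathrm{anti}}(\T)$ into $C^{1,\sigma}_{\mathrm{anti}}(\T)$ by the fundamental theorem of calculus. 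Composing with the first step, using $\sigma=rs$ and $q=p/r$, yields the two claimed mapping properties. The only genuinely technical point is the composition estimate, and, as indicated, it reduces to the exponent arithmetic $1+(rs)(p/r)=1+sp$ combined with the pointwise Hölder inequality inside the Gagliardo double integral; the behaviour of $\partial_t^{-1}$ on the periodic Sobolev and Hölder scales is standard and can be quoted from the appendix.
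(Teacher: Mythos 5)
Your proposal is correct and follows essentially the same route as the paper: the paper also factors the map as composition with $f^{-1}$ followed by $\partial_t^{-1}$, and the Gagliardo-seminorm estimate you spell out (driven by the pointwise Hölder bound and the exponent identity $1+(rs)(p/r)=1+sp$) is exactly the computation in the paper's \cref{lem:sob:root_regularity}, which the paper's one-line proof then cites. Your use of $f^{-1}(0)=0$ to drop the additive constant in the $L^{p/r}$ bound is a harmless simplification of the paper's version, which does not assume oddness of $g$.
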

\begin{proof}
	If $\beta \in C^{0,s}_\mathrm{anti}(\T)$, then $f^{-1}(\beta) \in C^{0, r s}_\mathrm{anti}(\T)$ since $f^{-1}$ is $r$-Hölder regular, and thus $\partial_t^{-1} f(\beta) \in C^{1, r s}_\mathrm{anti}(\T)$.
	If $\beta \in W^{s, p}_\mathrm{anti}(\T)$, then $f^{-1}(\beta) \in W^{r s, p/r}_\mathrm{anti}(\T)$ by \cref{lem:sob:root_regularity} and thus $\partial_t^{-1} f(\beta) \in W^{1 + r s, p/r}_\mathrm{anti}(\T)$.
\end{proof}

\begin{lemma}\label{lem:regularity_alpha_to_beta}
	The map
	\begin{align*}
		\alpha \mapsto \ft^{-1}\left(\left(\frac{\phi_k'(0)}{\ii k \omega} \hat \alpha_k\right)_{k\in\Zodd}\right)
	\end{align*}
	is well-defined from $W^{s, p}_\mathrm{anti}(\T)$ to $W^{s, p}_\mathrm{anti}(\T)$ for all $s \in (0, \infty)$ and $p \in [1, \infty)$ as well as from $C^{k, s}_\mathrm{anti}(\T)$ to $C^{k, s}_\mathrm{anti}(\T)$ for all $k \in \N_0$ and $s \in [0, 1]$.
\end{lemma}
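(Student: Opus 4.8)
The plan is to write out the Fourier multiplier explicitly and to recognise it, up to a constant, as a time translation. By \cref{prop:fundamental_mode} we have $\phi_k'(0) = C k (-1)^{(k-1)/2}$ for odd $k$, with a constant $C = C(T,a)$ independent of $k$, so the multiplier appearing in the lemma is
\begin{align*}
	m_k \coloneqq \frac{\phi_k'(0)}{\ii k \omega} = \frac{C}{\ii \omega}\, (-1)^{(k-1)/2}, \qquad k \in \Zodd,
\end{align*}
which in particular has constant modulus $\abs{m_k} = \abs{C}/\omega$. The key observation I would use is that for odd $k$ one has $\ee^{\ii k \omega T/4} = \ee^{\ii k \pi/2} = \ii\,(-1)^{(k-1)/2}$ (distinguishing the cases $k \equiv 1$ and $k \equiv 3 \pmod 4$), so that $m_k = -\tfrac{C}{\omega}\, \ee^{\ii k \omega T/4}$. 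Since the time-translation operator $(S_\tau g)(t) \coloneqq g(t + \tau)$ acts on Fourier coefficients by $\widehat{S_\tau g}_k = \ee^{\ii k \omega \tau}\hat g_k$, this identifies the map in the lemma with $\alpha \mapsto -\tfrac{C}{\omega}\, S_{T/4}\alpha$.

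The remaining step is then to invoke translation invariance of the function spaces involved. The operator $S_{T/4}$ preserves $\tfrac{T}{2}$-antiperiodicity, since it only multiplies Fourier coefficients by unimodular numbers and hence keeps the vanishing of the even coefficients; equivalently $(S_{T/4}g)(t + \tfrac T2) = g(t + \tfrac T2 + \tfrac T4) = -(S_{T/4}g)(t)$. Moreover $S_{T/4}$ is an isometry of $W^{s,p}(\T)$ for all $s \in [0,\infty)$, $p \in [1,\infty)$ — the $L^p$-norm and the Gagliardo--Slobodeckij seminorms are translation invariant and $S_{T/4}$ commutes with $\partial_t$ — and an isometry of $C^{k,s}(\T)$ for all $k \in \N_0$, $s \in [0,1]$. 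As the map in the lemma is just a fixed scalar multiple of $S_{T/4}$, the claimed mapping properties on $W^{s,p}_\mathrm{anti}(\T)$ and $C^{k,s}_\mathrm{anti}(\T)$ follow immediately.

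I do not anticipate a real obstacle here, since the whole argument reduces to an identification of a Fourier multiplier with a scalar times a shift. The only points needing a little care are the elementary identity $\ee^{\ii k \pi/2} = \ii\,(-1)^{(k-1)/2}$ for odd $k$, and, for completeness, checking that the translation invariance of $W^{s,p}(\T)$ persists in the range $s > 1$, where the norm is built from integer-order derivatives together with a fractional remainder term (which is itself translation invariant).
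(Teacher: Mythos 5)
Your proof is correct and proceeds by essentially the same idea as the paper's: recognize the Fourier multiplier $\phi_k'(0)/(\ii k\omega)$ as (a scalar multiple of) a translation and invoke translation invariance of the norms. The only stylistic difference is that you exploit $\tfrac{T}{2}$-antiperiodicity from the outset to identify the operator with the single shift $-\tfrac{C}{\omega}S_{T/4}$, whereas the paper writes the multiplier (extended by zero to even $k$) as the Fourier transform of the two-point measure $\tfrac{\sqrt{T}C}{2\omega}(\delta_{T/4}-\delta_{-T/4})$ and then bounds convolution with a finite measure in any translation-invariant norm; on the antiperiodic subspace these coincide, since $\alpha(\cdot-T/4)=-\alpha(\cdot+T/4)$.
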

\begin{proof}
	We begin by taking a closer look at the Fourier multiplier $\hat M_k \coloneqq \frac{\phi_k'(0)}{\ii k \omega}$ which is defined for $k \in \Zodd$ and extended by $0$ to the whole of $\Z$. By \cref{prop:fundamental_mode} we have $\phi_k'(0) = C k (-1)^{(k - 1) / 2}$ for a real constant $C$ depending only on $T$ and $a$. From this we obtain
	\begin{align*}
		\hat M_k = - \frac{\ii C}{\omega} \Im \ii^k
	\end{align*}
	for all $k \in \Z$. Now, $\hat M_k$ is the Fourier series of
	\begin{align*}
		M(t) \coloneqq \frac{\sqrt{T} C }{2 \omega} \left( \delta_{T/4}(t) - \delta_{-T/4}(t) \right)
	\end{align*}
	where $\delta_{x}$ denotes the Dirac measure at $x$. In particular, $M$ is a finite measure. For $\alpha \in L^1_\mathrm{anti}(\T)$ we calculate
	\begin{align*}
		\ft_k \left(\frac{1}{\sqrt{T}} M \ast \alpha\right)
		&= \frac{1}{\sqrt{T}} \int_\T \int_\T  \alpha(t - s) \der M(s) \overline{e_k(t)} \der t \\
		&= \int_\T \int_\T \alpha(t - s) \overline{e_k(t - s)} \der t \overline{e_k(s)} \der M(s)
		= \hat M_k \hat \alpha_k.
	\end{align*}
	so that $\ft^{-1}\left(k \mapsto \hat M_k \hat \alpha_k \right)$ exists and equals $\frac{1}{\sqrt{T}} M \ast \alpha$. To see that $\frac{1}{\sqrt{T}} M \ast (\impvar)$ maps $W^{s,p}_\mathrm{anti}(\T)$ into $W^{s, p}_\mathrm{anti}(\T)$ and $C^{k,s}_\mathrm{anti}(\T)$ into $C^{k, s}_\mathrm{anti}(\T)$, let $\norm{\impvar}$ be $\norm{\impvar}_{W^{s, p}}$ or $\norm{\impvar}_{C^{k,s}}$ (or any translation invariant norm). Then
	\begin{align*}
		\norm{\ft^{-1}\left((\hat M_k\hat\alpha_k)_{k\in\Zodd}\right)} &= \norm{\frac{1}{\sqrt{T}} M \ast \alpha}
		= \frac{1}{\sqrt{T}} \norm{\int_\T \alpha(\impvar - s) \der M(s)} \\
		& \leq \frac{1}{\sqrt{T}} \int_\T \norm{\alpha(\impvar - s)} \der \abs{M}(s)
		= \frac{\abs{M}(\T)}{\sqrt{T}} \norm{\alpha}. &\qedhere
	\end{align*}
\end{proof}

With the previous two lemmata, we can complete the bootstrapping argument stated next.
\begin{lemma}\label{lem:bootstrapping}
	If the pair $(\alpha, \beta)$ satisfies \eqref{eq:regularity_beta_to_alpha} and \eqref{eq:regularity_alpha_to_beta} with $\alpha, \beta \in L^1_\mathrm{anti}(\T)$, then $\alpha, \beta \in C^{1, r}_\mathrm{anti}(\T)$.
\end{lemma}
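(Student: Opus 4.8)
The plan is to run a bootstrap between the two maps supplied by \cref{lem:regularity_beta_to_alpha} and \cref{lem:regularity_alpha_to_beta}. Relation \eqref{eq:regularity_beta_to_alpha} expresses $\alpha$ as the image of $\beta$ under the map $\beta\mapsto\partial_t^{-1}f^{-1}(\beta)$, which \emph{gains} essentially one full derivative (sending $W^{s,p}_{\mathrm{anti}}(\T)$ into $W^{1+rs,p/r}_{\mathrm{anti}}(\T)$ and $C^{0,s}_{\mathrm{anti}}(\T)$ into $C^{1,rs}_{\mathrm{anti}}(\T)$), whereas relation \eqref{eq:regularity_alpha_to_beta} expresses $\beta$ as the image of $\alpha$ under the bounded Fourier multiplier of \cref{lem:regularity_alpha_to_beta}, which \emph{preserves} the regularity class. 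Alternating the two maps will therefore pump the regularity of the pair $(\alpha,\beta)$ upward until it saturates at $C^{1,r}_{\mathrm{anti}}(\T)$.

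Concretely, I would start from $\beta\in L^1_{\mathrm{anti}}(\T)=W^{0,1}_{\mathrm{anti}}(\T)$. One application of \cref{lem:regularity_beta_to_alpha} with $s=0$, $p=1$ gives $\alpha\in W^{1,1/r}_{\mathrm{anti}}(\T)$. Since $r\in(0,1)$ we have $1/r>1$, so the one-dimensional Sobolev--Slobodeckij embedding (see \cref{sec:sobolevSpaces}) yields $W^{1,1/r}(\T)\embeds C^{0,1-r}(\T)$, hence $\alpha\in C^{0,1-r}_{\mathrm{anti}}(\T)$, and by \cref{lem:regularity_alpha_to_beta} also $\beta\in C^{0,1-r}_{\mathrm{anti}}(\T)$. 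Feeding this back through \cref{lem:regularity_beta_to_alpha} in its Hölder form with $s=1-r$ gives $\alpha\in C^{1,r(1-r)}_{\mathrm{anti}}(\T)\subseteq C^{0,1}_{\mathrm{anti}}(\T)$, and \cref{lem:regularity_alpha_to_beta} then gives $\beta\in C^{0,1}_{\mathrm{anti}}(\T)$. A final pass through \cref{lem:regularity_beta_to_alpha} with $s=1$ produces $\alpha\in C^{1,r}_{\mathrm{anti}}(\T)$, and \cref{lem:regularity_alpha_to_beta} with $k=1$, $s=r$ then yields $\beta\in C^{1,r}_{\mathrm{anti}}(\T)$, which is the claim.

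There is no deep obstacle here; the argument is essentially bookkeeping of Sobolev and Hölder exponents. The one step that needs a little care is the exit from the $W^{s,p}$ scale: one must check that the first application of \cref{lem:regularity_beta_to_alpha} lands in a space that embeds into a genuine Hölder class, i.e. that $1-r>0$, so that from that point on the purely Hölder-valued iteration is self-improving (the inclusion $C^{1,\mu}\subseteq C^{0,1}$ on the compact torus absorbs the small loss $\mu=r(1-r)$). Equivalently one could iterate entirely inside $W^{s,p}$, noting that the exponents $s_{n+1}=1+rs_n$ increase toward $1/(1-r)>1$ and hence eventually land in a Hölder space; the route above just sidesteps the boundary case $s=1$ that is excluded in \cref{lem:regularity_beta_to_alpha}. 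Finally, one should record that every map and embedding used preserves $\tfrac{T}{2}$-antiperiodicity — this is already encoded in the ``anti'' subscripts of \cref{lem:regularity_beta_to_alpha} and \cref{lem:regularity_alpha_to_beta} — so the conclusion genuinely lies in $C^{1,r}_{\mathrm{anti}}(\T)$.
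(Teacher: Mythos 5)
Your argument is correct, and it is genuinely shorter than the paper's. The paper stays entirely inside the $W^{s,p}$ scale: starting from $\beta\in W^{0,1}$, it iterates \cref{lem:regularity_beta_to_alpha} and \cref{lem:regularity_alpha_to_beta} $n$ times, each round pushing the integrability index from $1/r^{k}$ to $1/r^{k+1}$ while the smoothness index saturates at $1+r-\varepsilon$ (it cannot exceed $1+r$ because the input index $s$ is capped below $1$ in \cref{lem:regularity_beta_to_alpha}); once $(r-\varepsilon)\cdot r^{-(2+n)}>1$ it invokes \cref{lem:sob:embedding} to land in $C^1_\mathrm{anti}(\T)$, and then does one final Hölder-scale pass to get $C^{1,r}_\mathrm{anti}(\T)$. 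You instead escape the Sobolev scale immediately after the very first application, using the classical one-dimensional Morrey embedding $W^{1,1/r}(\T)\embeds C^{0,1-r}(\T)$ (valid since $1/r>1$), and then run two further passes entirely in the Hölder scale, using $C^{1,\mu}(\T)\subseteq C^{0,1}(\T)$ to promote $s$ to its maximal value $1$ so that the final output exponent is exactly $r$. This avoids the $\varepsilon$-bookkeeping and the ``iterate until $sp>1$'' step altogether. The one caveat: your citation of \cref{sec:sobolevSpaces} for the Morrey step is slightly off — the appendix only records \cref{lem:sob:embedding}, which is the embedding $W^{1+s,p}(\T)\embeds C^{1,s-1/p}(\T)$ for $s\in(0,1)$, not the integer-order statement $W^{1,p}(\T)\embeds C^{0,1-1/p}(\T)$ that you need. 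The latter is of course a standard consequence of the fundamental theorem of calculus plus Hölder's inequality on the compact torus, so the gap is purely a matter of supplying the correct reference (or a one-line proof), not a mathematical defect.
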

\begin{proof}
	By \cref{lem:regularity_beta_to_alpha} we have $\alpha \in W^{1, 1 / r}_\mathrm{anti}(\T)$, and therefore $\beta \in W^{1, 1/r}_\mathrm{anti}(\T)$ by \cref{lem:regularity_alpha_to_beta}. 
	Applying \cref{lem:regularity_beta_to_alpha,lem:regularity_alpha_to_beta} again, we get $\alpha, \beta \in W^{1 + r - \varepsilon, 1/r^2}_\mathrm{anti}(\T)$ for any $\varepsilon > 0$. 
	Repeating this $n$ times, we obtain $\alpha, \beta \in W^{1 + r - \varepsilon, 1/r^{2 + n}}_\mathrm{anti}(\T)$. 
	If $n \in \N$ is large enough, then $W^{1+ r - \varepsilon, 1/r^{2 + n}}_\mathrm{anti}(\T)$ embeds continuously into $C^1_\mathrm{anti}(\T)$ by \cref{lem:sob:embedding}, so in particular we have $\alpha, \beta \in C^1_\mathrm{anti}(\T)$. 
	Now, applying \cref{lem:regularity_beta_to_alpha,lem:regularity_alpha_to_beta} one last time yields $\alpha, \beta \in C^{1,r}_\mathrm{anti}(\T)$.
\end{proof}

\begin{proof}[Proof of \cref{thm:regularity}]
	Note that $\alpha, \beta \in L^1_\mathrm{anti}(\T)$ by \cref{def:weak_periodic_solution}, so \cref{lem:bootstrapping} is applicable and yields $\alpha, \beta \in C^{1, r}_\mathrm{anti}(\T)$.

	By $d_1 \coloneqq \theta \pi, d_2 \coloneqq (2 - \theta) \pi, d_3 \coloneqq (2 + \theta) \pi, \dots$ we label the discontinuities of $V$. We start by showing that $u \in C^{1, r}_\mathrm{anti}([0, d_1] \times \T)$. To do this, consider
	\begin{align}\label{eq:loc:solution_formula:1}
		w(x, t) \coloneqq \frac{1}{2} \left( \alpha(t + \sqrt{a} x) + \alpha(t - \sqrt{a} x) \right) + \frac{1}{2 \sqrt{a}} \left( \beta(t + \sqrt{a} x) - \beta(t - \sqrt{a} x) \right).
	\end{align}
	Note that $w$ is $\frac{T}{2}$-antiperiodic in time. The $k$-th Fourier coefficient of $w$ is given by
	\begin{align*}
		\hat w_k(x) 
		&= \frac{\hat \alpha_k}{2} \left( \ee^{i k \omega \sqrt{a} x} + \ee^{- \ii k \omega \sqrt{a} x} \right) + \frac{\hat \beta_k}{2 \sqrt{a}} \left( \ee^{i k \omega \sqrt{a} x} - \ee^{- \ii k \omega \sqrt{a} x} \right) \\
		&= \hat \alpha_k \cos(k \omega \sqrt{a} x) + \frac{\hat \beta_k \ii}{\sqrt{a}} \sin(k \omega \sqrt{a} x).
	\end{align*}
	We see that $\hat w_k$ solves $L_k \hat w_k = 0$ on $[0, d_1]$ and at $x = 0$ it satisfies
	\begin{align*}
		\hat w_k(0) = \hat \alpha_k = \hat \alpha_k \phi_k(0)
		\quad\text{and}\quad
		\hat w_k'(0) = \frac{\hat \beta_k \ii}{\sqrt{a}} k \omega \sqrt{a} = \hat \alpha_k \phi_k'(0),
	\end{align*}
	where we have used \eqref{eq:regularity_alpha_to_beta}. So $\hat w_k(x) = \alpha_k \phi_k(x)$ must hold, and from this we obtain
	\begin{align*}
		w(x, t) = \sum_{k \in \Zodd} \hat w_k(x) e_k(t) = \sum_{k \in \Zodd} \hat \alpha_k \phi_k(x) e_k(t) = u(x, t).
	\end{align*}
	As $w$ is given by \eqref{eq:loc:solution_formula:1}, $u = w \in C^{1, r}_\mathrm{anti}([0, d_1] \times \T)$ follows immediately.

	Now assume that $u \in C^{1, r}_\mathrm{anti}([0, d_n]\times\T)$ holds for some $n \in \N$. We aim to show $u \in C^{1, r}_\mathrm{anti}([0, d_{n + 1}])$, 
	denote by $v \in \set{a,b}$ the value of $V$ on $(d_n, d_{n + 1})$ and define a function $w$ by
	\begin{align}\label{eq:loc:solution_formula:2}
		w(x, t) = \frac12 \left( u(d_n, t + \sqrt{v}(x - d_n)) + u(d_n, t - \sqrt{v}(x - d_n)) \right) + \frac{1}{2 \sqrt{v}} \int_{t - \sqrt{v}(x - d_n)}^{t + \sqrt{v}(x - d_n)} u_x(d_n, \tau) \der \tau
	\end{align}
	for $x \in [d_n, d_{n + 1}]$ and $t \in \T$. Then $w \in C^{1, r}_\mathrm{anti}([d_n, d_{n + 1}] \times \T)$ follows immediately from \eqref{eq:loc:solution_formula:2}. 
	Arguing as above, one can show $L_k \hat w_k(x) = 0$ for all $k \in \Z$. Since $\hat w_k(d_n) = \hat u_k(d_n) = \hat \alpha_k \phi_k(d_n)$ and $\hat w_k'(d_n) = \hat \alpha_k \phi_k'(d_n)$, we again get $\hat w_k(x) = \hat \alpha_k \phi_k(x)$ and thus $w = u$ on $[d_n, d_{n + 1}] \times \T$.

	Next we need to show the uniform bound $\abs{u(x, t)} \leq C \ee^{- \rho x}$ with $\rho = \frac{\log(b) - \log(a)} {4 \pi}$.
	By \cref{prop:fundamental_mode}, $u$ satisfies $u(x + 4 \pi, t) = \frac{a}{b} u(x, t)$ for all $x \in [0, \infty)$ and $t \in \T$. Hence we can choose
	\begin{align*}
		C \coloneqq \max_{x \in [0, 4 \pi], t \in \T} \ee^{\rho x} \abs{u(x, t)}.
	\end{align*}
	To show that $u$ is a $C^1$-solution to \eqref{eq:IP}, first from \eqref{eq:loc:solution_formula:1} it follows that the directional derivative
	\begin{align*}
		(\partial_t - c(x) \partial_x) (u_t + c(x) u_x)
	\end{align*}
	exists and equals $0$ for $x \in (0, d_1)$ as $c(x) = \frac{1}{\sqrt{a}}$ here. Similarly, using \eqref{eq:loc:solution_formula:2} we obtain
	\begin{align*}
		(\partial_t - c(x) \partial_x) (u_t + c(x) u_x) = 0
	\end{align*}
	for $x \in (d_n, d_{n+1})$ as $c(x) = \frac{1}{\sqrt{v}}$. 
	Lastly, due to \eqref{eq:solution_fourier_series}, \eqref{eq:regularity_alpha_to_beta} and the definition of $\beta$ we have
	\begin{align*}
		\ft_k(u_x(0, \impvar)) = \phi_k'(0) \hat \alpha_k = i k \omega \hat \beta_k = \ft_k(\beta') = \ft_k(f(u_t(0, \impvar))_t)
	\end{align*}
	for all $k \in \Zodd$, so $u_x(0, t) = (f(u_t(0, t)))_t$ for all $t \in \T$. This shows that $u$ is a $C^1$-solution to \eqref{eq:IP} with its own initial data.
\end{proof}

	\appendix
	\section{} \label{appendix_A}

\begin{lemma}\label{lem:aux:chain_rule_formula} 
	For $t_0, t_1 \in \R$ with $t_0 < t_1$ and $g \in C([t_0, t_1], \R)$ with $f \circ g$ is $C^1([t_0, t_1])$, the equation
	\begin{align*}
		F(g(t_1)) - F(g(t_0)) = \int_{t_0}^{t_1} g(t) \dv{f(g(t))}{t} \der t
	\end{align*}
	holds.
\end{lemma}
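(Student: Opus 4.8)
The strategy is to replace the merely continuous inner function $g$ by the $C^1$ function $h\coloneqq f\circ g$ and to exploit the Legendre--Young structure of $F$. The key preliminary fact is the identity
\[
	F(s) = \int_{f(0)}^{f(s)} f^{-1}(\tau)\,\der\tau \qquad\text{for all } s\in\R,
\]
equivalently $\int_0^s f(\sigma)\,\der\sigma + \int_{f(0)}^{f(s)} f^{-1}(\tau)\,\der\tau = s\,f(s)$. Since $f$ is a continuous increasing bijection of $\R$, this follows from the Riemann--Stieltjes change of variables $\int_{f(0)}^{f(s)} f^{-1}(\tau)\,\der\tau = \int_0^s f^{-1}(f(\sigma))\,\der f(\sigma) = \int_0^s \sigma\,\der f(\sigma)$ (all integrals being well defined since $f$ is monotone and the integrands are continuous), together with the Riemann--Stieltjes integration by parts $\int_0^s \sigma\,\der f(\sigma) = s f(s) - \int_0^s f(\sigma)\,\der\sigma$. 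Alternatively one may first check the identity for $f\in C^1$, where $F'(s)=s f'(s)$, and then pass to the limit, using that $F$ depends continuously on $f$ and $f^{-1}$ uniformly on compact sets. In particular, $F(b)-F(a) = \int_{f(a)}^{f(b)} f^{-1}(\tau)\,\der\tau$ for all $a,b\in\R$.

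Applying this with $a=g(t_0)$, $b=g(t_1)$ and writing $h\coloneqq f\circ g\in C^1([t_0,t_1])$, we obtain
\[
	F(g(t_1)) - F(g(t_0)) = \int_{h(t_0)}^{h(t_1)} f^{-1}(\tau)\,\der\tau = \Phi(h(t_1)) - \Phi(h(t_0)),
	\qquad \Phi(y)\coloneqq \int_0^y f^{-1}(\tau)\,\der\tau .
\]
Because $f^{-1}$ is continuous, $\Phi\in C^1(\R)$ with $\Phi'=f^{-1}$, and since $h\in C^1$, the composition $t\mapsto\Phi(h(t))$ is $C^1$ with derivative $\Phi'(h(t))\,h'(t) = f^{-1}(h(t))\,h'(t) = g(t)\,h'(t)$, where we used $f^{-1}(h(t)) = f^{-1}(f(g(t))) = g(t)$. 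The fundamental theorem of calculus then yields
\[
	F(g(t_1)) - F(g(t_0)) = \int_{t_0}^{t_1} \frac{\mathrm{d}}{\mathrm{d} t}\,\Phi(h(t))\,\der t = \int_{t_0}^{t_1} g(t)\,\dv{f(g(t))}{t}\,\der t,
\]
which is the assertion; the integrand on the right is continuous, so no integrability question arises.

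The only genuinely delicate point is the Young-type identity of the first step: since $f^{-1}$ need not be differentiable, one cannot differentiate $\Phi\circ f$ directly and must go through Riemann--Stieltjes calculus (or the approximation argument). Everything afterwards is a routine use of the chain rule and the fundamental theorem of calculus. Note that it is essential to differentiate $\Phi\circ h$ rather than $F\circ g$, because $g$ itself need not be differentiable --- in fact it need not even be of bounded variation.
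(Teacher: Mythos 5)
Your proof is correct, and it takes a genuinely different route from the paper's. The paper's proof mollifies $f$ to obtain $C^1$ functions $f_n\coloneqq\phi_n\ast f$, defines $g_n\coloneqq f_n^{-1}\circ(f\circ g)$ so that $f_n\circ g_n=f\circ g$, applies the elementary integration-by-parts identity in the $C^1$ case to the pair $(f_n,g_n)$, and then passes to the limit $n\to\infty$ after verifying that $f_n\to f$, $f_n^{-1}\to f^{-1}$ (hence $g_n\to g$) and $F_n\to F$ uniformly on compacts. Your proof instead exploits the Legendre--Young structure of $F$: you observe that $F(s)=\Phi(f(s))-\Phi(f(0))$ where $\Phi(y)=\int_0^y f^{-1}(\tau)\,\der\tau$, so that $F\circ g=\Phi\circ(f\circ g)+\mathrm{const}$. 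Because $\Phi\in C^1$ (as $f^{-1}$ is continuous by \eqref{ass:nonlinearity}) and $h\coloneqq f\circ g\in C^1$ by hypothesis, the ordinary chain rule and the fundamental theorem of calculus give the identity at once, with $(\Phi\circ h)'=f^{-1}(h)\,h'=g\,h'$. The Young identity itself is handled by Riemann--Stieltjes substitution and integration by parts, which are legitimate here since $f$ is a continuous increasing bijection. The comparison: your argument is shorter, avoids any approximation step, and makes it structurally transparent why the hypotheses ask only for $f\circ g\in C^1$ and not $g\in C^1$ --- the object that must be differentiated is $\Phi\circ h$, never $F\circ g$ directly. The paper's approach is more pedestrian but also more self-contained, relying only on integration by parts and standard mollification facts; the price is the somewhat delicate limit passage (including a uniform estimate on $\|f_n^{-1}-f^{-1}\|_\infty$). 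Both are valid; yours is arguably the cleaner proof given the convex-duality structure of $F$.
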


\begin{proof} 
Assume first that $f$ and $g$ are both $C^1$ in which case the definition $F(y)=yf(y)-\int_0^y f(s)\,ds$ and integration by parts yield the result
	\begin{align}\label{eq:loc:boundary_under_ass_c1}
		\begin{split}
			\int_{t_0}^{t_1} g(t) \dv{f(g(t))}{t} \der t 
			&= \left[g(t) f(g(t))\right]_{t = t_0}^{t_1} - \int_{t_0}^{t_1} g'(t)f(g(t)) \der t \\
			&= \left[g(t) f(g(t))\right]_{t = t_0}^{t_1} - \int_{g(t_0)}^{g(t_1)} f(v) \der v
			= F(g(t_1)) - F(g(t_0)).
		\end{split}
	\end{align}
	For the general case, choose a sequence of non-negative smooth mollifiers $\phi_n \colon \R \to [0, \infty)$ converging to $\delta_0$, each with support in $[-\tfrac1n, \tfrac1n]$ and with average $\int_\R \phi_n(x) \der x = 1$. Since $f$ is strictly increasing, so is $f_n \coloneqq \phi_n \ast f$. In particular, $f_n$ is bijective and we may define $g_n \coloneqq \left(f_n\right)^{-1} \circ f \circ g$ so that $f_n\circ g_n=f\circ g$. 
	
	Clearly, $f_n \to f$ uniformly on compacts. To see that $g_n \to g$ uniformly on compacts, it suffices to show $\norm{\left(f_n\right)^{-1} - f^{-1}}_\infty \leq \tfrac1n$ for $n \in \N$. Note that
	\begin{align*}
		f_n(x - \tfrac1n) = \int_{x - \tfrac2n}^{x} f(y) \phi_n(x - \tfrac1n - y) \der y \leq \int_{x - \tfrac2n}^{x} f(x) \phi_n(x - \tfrac1n - y) \der y = f(x).
	\end{align*}
	If we choose $x \coloneqq f^{-1}(y)$ for arbitrary $y \in \R$ and apply $\left(f_n\right)^{-1}$ to both sides of the above inequality, we get $f^{-1}(y) - \tfrac1n \leq \left(f_n\right)^{-1}(y)$. Similarly, $f^{-1}(y) + \tfrac1n \geq \left(f_n\right)^{-1}(y)$ holds so that the estimate $\norm{\left(f_n\right)^{-1} - f^{-1}}_\infty \leq \tfrac1n$ is shown. Letting $F_n(s) \coloneqq s f_n(s) - \int_{0}^{s} f_n(\sigma) \der \sigma$, by \eqref{eq:loc:boundary_under_ass_c1} we have
	\begin{align*}
		F_n(g_n(t_1)) - F_n(g_n(t_0)) = \int_{t_0}^{t_1} g_n(t) \dv{f_n(g_n(t))}{t} \der t = \int_{t_0}^{t_1} g_n(t) \dv{f(g(t))}{t} \der t.
	\end{align*}
	For $n \to \infty$, the desired result follows. 
\end{proof}

\section{Sobolev-Slobodeckij space} \label{sec:sobolevSpaces}

\begin{definition}\label{def:sob:spaces}
	Denote the distance on the torus $\T$ by $d$. Then, for $s \in (0, 1)$ and $p \in [1, \infty)$ define the Sobolev-Slobodeckij space $W^{s, p}(\T) \coloneqq \set{u \in L^p(\T) \colon \seminorm{u}_{W^{s, p}(\T)} < \infty}$ with 
	\begin{align*}
		\seminorm{u}_{W^{s, p}(\T)}^p = \int_\T \int_\T \frac{\abs{u(t_1) - u(t_2)}^p}{d(t_1, t_2)^{1 + s p}} \der t_1 \der t_2
	\end{align*}
	Also let $W^{0, p}(\T) \coloneqq L^p(\T)$ and $W^{k + s, p}(\T) \coloneqq \set{u \in W^{k, p}(\T) \colon u^{(k)} \in W^{s, p}(\T)}$ for $k \in \N$, $s \in [0, 1)$ and $p \in [1, \infty)$.
\end{definition}

\begin{lemma}\label{lem:sob:root_regularity}
	If $g \colon \R \to \R$ is $r$-Hölder continuous, then the map 
	\begin{align*}
		W^{s, p}(\T) \to  W^{r s, p / r}(\T), u \mapsto g \circ u
	\end{align*}
	is well-defined for $s \in [0, 1)$ and $p \in [1, \infty)$.
\end{lemma}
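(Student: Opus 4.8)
The plan is to verify directly that $g\circ u\in W^{rs,\,p/r}(\T)$ for every $u\in W^{s,p}(\T)$. The only input needed is the $r$-Hölder bound: there is $C_g\ge 0$ with $\abs{g(\xi)-g(\eta)}\le C_g\abs{\xi-\eta}^r$ for all $\xi,\eta\in\R$; in particular $g$ is continuous, so $g\circ u$ is measurable. I would split the verification into $L^{p/r}$-membership and finiteness of the Sobolev--Slobodeckij seminorm.

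For the integrability I would use $\abs{g(u(t))}\le\abs{g(0)}+C_g\abs{u(t)}^r$, raise to the power $p/r\ge 1$ and apply the elementary inequality $(a+b)^{p/r}\le 2^{p/r-1}(a^{p/r}+b^{p/r})$ to get $\abs{g(u(t))}^{p/r}\le C'\bigl(1+\abs{u(t)}^p\bigr)$ for a constant $C'=C'(p,r,C_g,g(0))$. Integrating over the finite-measure torus and using $W^{s,p}(\T)\subseteq L^p(\T)$ then yields $g\circ u\in L^{p/r}(\T)$. For $s=0$ this already finishes the argument, since $W^{0,p}(\T)=L^p(\T)$ and $W^{0,p/r}(\T)=L^{p/r}(\T)$ by definition.

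For $s\in(0,1)$ the decisive observation is the exponent bookkeeping: in $\seminorm{\,\cdot\,}_{W^{rs,p/r}(\T)}$ the weight exponent equals $1+(rs)\cdot(p/r)=1+sp$, which is exactly the weight exponent appearing in $\seminorm{\,\cdot\,}_{W^{s,p}(\T)}$. Applying the Hölder bound pointwise inside the double integral gives $\abs{g(u(t_1))-g(u(t_2))}^{p/r}\le C_g^{p/r}\abs{u(t_1)-u(t_2)}^{r\cdot p/r}=C_g^{p/r}\abs{u(t_1)-u(t_2)}^p$, hence
\[
	\seminorm{g\circ u}_{W^{rs,p/r}(\T)}^{p/r}
	=\int_\T\int_\T\frac{\abs{g(u(t_1))-g(u(t_2))}^{p/r}}{d(t_1,t_2)^{1+sp}}\der t_1\der t_2
	\le C_g^{p/r}\,\seminorm{u}_{W^{s,p}(\T)}^{p}<\infty .
\]
Combined with the $L^{p/r}$-bound this gives $g\circ u\in W^{rs,p/r}(\T)$, completing the proof.

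I do not expect a genuine obstacle here; the whole argument is a pointwise substitution, and the only step that must be handled with care is the exponent identity $1+(rs)(p/r)=1+sp$, which is precisely what lets the Hölder estimate transport the fractional seminorm with no loss. One may even record the quantitative bounds $\seminorm{g\circ u}_{W^{rs,p/r}(\T)}\le C_g\,\seminorm{u}_{W^{s,p}(\T)}^{r}$ and $\norm{g\circ u}_{L^{p/r}(\T)}^{p/r}\le C'\bigl(\abs{\T}+\norm{u}_{L^p(\T)}^{p}\bigr)$, although only well-definedness is asserted in the statement.
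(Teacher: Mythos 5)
Your proof is correct and is essentially the paper's own argument: the same decomposition into an $L^{p/r}$-bound via $\abs{g(\xi)}\le\abs{g(0)}+C_g\abs{\xi}^r$ and a seminorm bound via the pointwise Hölder estimate, exploiting the exponent identity $1+(rs)(p/r)=1+sp$. The only cosmetic additions (noting measurability and spelling out $s=0$ separately) do not change the approach.
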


\begin{proof}
	By assumption, there exists $C > 0$ such that $\abs{g(x) - g(y)} \leq C \abs{x - y}^{r}$ holds for all $x, y \in \R$.
	First, let $u \in L^p(\T)$.  Then
	\begin{align*}
		\norm{g(u)}_{L^{p/r}(\T)}^{p/r} 
		&= \int_\T \abs{g(u(t))}^{p/r} \der t
		\leq 2^{p/r - 1} \int_\T \left(\abs{g(u(t)) - g(0)}^{p/r} + \abs{g(0)}^{p/r}\right) \der t \\
		&\leq 2^{p/r - 1} \int_\T \left(C^{p/r} \abs{u(t)}^{p} + \abs{g(0)}^{p/r}\right) \der t 
		= 2^{p/r - 1} \left( C^{p/r} \norm{u}_{L^p(\T)}^p + T \abs{g(0)}^{p/r} \right),
	\end{align*}
	so $g(u) \in L^{p/r}(\T)$. Now let $u \in W^{s, p}(\T)$ with $s \in (0, 1)$. Then
	\begin{align*}
		\seminorm{g(u)}_{W^{r s, p / r}(\T)}^{p / r} 
		&= \int_\T \int_\T \frac{\abs{g(u(t_1)) - g(u(t_2))}^{p/r}}{d(t_1, t_2)^{1 + s p}} \der t_1 \der t_2 \\
		&\leq \int_\T \int_\T \frac{C^{p/r} \abs{u(t_1) - u(t_2)}^{p}}{d(t_1, t_2)^{1 + s p}} \der t_1 \der t_2
		= C^{p/r} \seminorm{u}_{W^{s, p}(\T)}^p. \qedhere
	\end{align*}
\end{proof}

\begin{lemma}\label{lem:sob:embedding}
	$W^{1 + s, p}(\T) \embeds C^{1,s-\frac{1}{p}}(\T)$ for $s \in (0, 1), p \in (1, \infty)$ with $s p > 1$.
\end{lemma}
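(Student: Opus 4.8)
The plan is to deduce this from the one‑dimensional fractional Morrey embedding $W^{s,p}(\T) \embeds C^{0,\,s-1/p}(\T)$, valid for $s \in (0,1)$, $p \in (1,\infty)$ with $sp>1$, and then apply it to the derivative. Indeed, if $u \in W^{1+s,p}(\T)$ then by \cref{def:sob:spaces} we have $u \in W^{1,p}(\T)$ and $u' \in W^{s,p}(\T)$; once $u' \in C^{0,s-1/p}(\T)$ is known, $u$ is absolutely continuous with $u(t) = u(t_0) + \int_{t_0}^{t} u'(\tau)\der\tau$, hence $u \in C^1(\T)$ with classical derivative $u'$, so $u \in C^{1,s-1/p}(\T)$, and the norm estimate follows from the corresponding bounds for $u'$ and for $\|u'\|_\infty + \|u\|_{L^1(\T)}$. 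It therefore suffices to prove $W^{s,p}(\T) \embeds C^{0,s-1/p}(\T)$.

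The first step is a Campanato‑type oscillation estimate: for $v \in W^{s,p}(\T)$ and any interval $I \subseteq \T$ of length $\ell \le T/2$, with mean $v_I \coloneqq \tfrac1\ell\int_I v$, we claim
$$\frac1\ell \int_I \abs{v(t) - v_I}\der t \;\le\; \frac{1}{\ell^2}\int_I\int_I \abs{v(t)-v(\tau)}\der\tau\,\der t \;\le\; \ell^{\,s-1/p}\,\seminorm{v}_{W^{s,p}(\T)},$$
where the last inequality is obtained by writing $\abs{v(t)-v(\tau)} = \bigl(\abs{v(t)-v(\tau)}^p d(t,\tau)^{-1-sp}\bigr)^{1/p}\,d(t,\tau)^{(1+sp)/p}$ and applying Hölder's inequality on $I\times I$ with exponents $p$ and $p' = p/(p-1)$, using $d(t,\tau)\le\ell$ on $I$; it is precisely the double average that produces the sharp exponent $s-1/p$ (a single average over $\tau$ for fixed $t$ only gives an almost‑everywhere bound). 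The second step is the classical passage from this estimate to Hölder continuity: fix $t_0$ and the dyadically shrinking intervals $I_k$ of length $\ell 2^{-k}$ centred at $t_0$; then $\abs{v_{I_{k+1}} - v_{I_k}} \le 2\cdot\tfrac{1}{|I_k|}\int_{I_k}\abs{v-v_{I_k}} \le 2(\ell 2^{-k})^{s-1/p}\seminorm{v}_{W^{s,p}(\T)}$, and summing the geometric series shows that $v_{I_k}(t_0)$ converges to the Lebesgue value $\tilde v(t_0)$ with $\abs{\tilde v(t_0) - v_{I_0}} \le C\ell^{s-1/p}\seminorm{v}_{W^{s,p}(\T)}$. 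Taking $\ell = T/4$ and using $\abs{v_{I_0}} \le \tfrac{4}{T}\|v\|_{L^1(\T)}$ gives $\|\tilde v\|_\infty \le C\|v\|_{W^{s,p}(\T)}$; and for $d(t_0,t_1)=\rho$ small, comparing the means over the two length‑$2\rho$ intervals centred at $t_0$ and $t_1$ (which share a common subinterval of length $\rho$) yields $\abs{\tilde v(t_0)-\tilde v(t_1)} \le C\rho^{s-1/p}\seminorm{v}_{W^{s,p}(\T)}$, the case of non‑small $d(t_0,t_1)$ being trivial from the $L^\infty$ bound. This gives the embedding together with its norm estimate.

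The only genuinely delicate point is obtaining the sharp Hölder exponent $s-1/p$ rather than $s$, which is why the double‑average bound in the first step is essential; everything else is the standard fractional Morrey–Sobolev argument (cf.\ the analogous inequality on $\R^n$ in the literature on Sobolev–Slobodeckij spaces), with only minor bookkeeping needed to run the dyadic telescoping on the torus $\T$ rather than on $\R$.
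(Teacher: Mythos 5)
Your overall strategy coincides with the paper's: observe that $u' \in W^{s,p}$ with $sp>1$, invoke the fractional Morrey embedding $W^{s,p} \embeds C^{0,\,s-1/p}$, and integrate to recover $u \in C^{1,\,s-1/p}(\T)$. Where you differ is in how that key embedding is obtained: the paper restricts $u'$ to $[0,T]$ and cites \cite[Theorem 2]{Hitch}, whereas you give a self-contained Campanato-type proof directly on $\T$. Your computations check out. In the Hölder step with exponents $p$ and $p'=p/(p-1)$ on the double average over $I\times I$, the second factor contributes $\ell^{2/p'}\cdot\ell^{(1+sp)/p}$, and with the $\ell^{-2}$ prefactor the exponent is $-2 + 2/p' + (1+sp)/p = s - 1/p$, exactly as you claim; the dyadic telescoping converges because $s-1/p>0$ is ensured by $sp>1$; and the overlapping-interval comparison for two nearby points gives the Hölder seminorm estimate, with the $L^\infty$ bound handling the case of separated points. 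Your route has the small advantage of living on the torus throughout, so periodicity is automatic (the paper's cited theorem gives a Hölder representative only on the interval $[0,T]$, and one has to note afterwards that this representative patches periodically, which the paper leaves implicit); the paper's version is shorter by outsourcing the standard Sobolev--Slobodeckij--Morrey inequality to the literature. Both are acceptable.
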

\begin{proof}
	Consider the fractional Sobolev-Slobodeckij space $W^{s, p}([0, T])$ which is similarly defined using the seminorm
	\begin{align*}
		\seminorm{v}_{W^{s, p}([0, T])}^p = \int_0^T \int_0^T \frac{\abs{v(t_1) - v(t_2)}^p}{\abs{t_1 - t_2}^{1 + s p}} \der t_1 \der t_2
	\end{align*}
	We have $\seminorm{u'}_{W^{s, p}([0, T])}^p \leq \seminorm{u'}_{W^{s, p}(\T)}^p < \infty$, so that  $u' \in W^{s, p}([0, T])$ and from \cite[Theorem 2]{Hitch} it follows that $u' \in C^{(s p - 1) / p}([0, T])$.
\end{proof}

	\section*{Acknowledgment}
	Funded by the Deutsche Forschungsgemeinschaft (DFG, German Research Foundation) – Project-ID 258734477 – SFB 1173. 

    \printbibliography
\end{document}